\documentclass{article}

\usepackage[T1]{fontenc}
\usepackage{geometry}
\usepackage{microtype}
\usepackage{mathtools}
\usepackage{amssymb}
\usepackage{amsthm}
\usepackage[shortlabels]{enumitem}
\usepackage{float}
\usepackage{caption}
\usepackage{subcaption}
  \captionsetup[subfigure]{labelformat=parens}

\usepackage{tikz}
\usepackage{xcolor}
\usepackage{hyperref}
	\hypersetup{
		colorlinks=true,
		linkcolor=red,
		urlcolor=blue,
		citecolor=magenta
	}
	\urlstyle{texttt}

\newcommand{\Nat}{\mathbb{N}}
\newcommand{\defining}[1]{\emph{#1}}
\newcommand{\defn}{\coloneqq}
\newcommand{\NP}{\text{NP}}
\newcommand{\List}[2]{#1_{#2}}
\newcommand{\ListAsgn}[1]{\mathcal{#1}}
\newcommand{\Set}[1]{\{ #1 \}}
\newcommand{\card}[1]{\lvert #1 \rvert}
\newcommand{\floor}[1]{\lfloor #1 \rfloor}
\DeclareMathOperator{\chr}{\chi}
\DeclareMathOperator{\ch}{\chi_{\ell}}
\DeclareMathOperator{\cho}{\chi_{\ell}}
\DeclareMathOperator{\Color}{\mathsf{color}}	
\DeclareMathOperator{\degree}{\mathsf{degree}}
\DeclareMathOperator{\outdegree}{\mathsf{outdegree}}
\DeclareMathOperator{\jump}{\mathsf{jump}}

\theoremstyle{plain}
\newtheorem{theorem}{Theorem}
\newtheorem{corollary}[theorem]{Corollary}
\newtheorem{lemma}[theorem]{Lemma}
\newtheorem{question}[theorem]{Question}

\theoremstyle{definition}
\newtheorem{definition}[theorem]{Definition}

\definecolor{myred}{RGB}{230,97,1}	
\definecolor{mygreen}{RGB}{253,184,99}	
\definecolor{myblue}{RGB}{94,60,153}	
\definecolor{myyellow}{RGB}{178,171,210}	

\title{$5$-list coloring toroidal $6$-regular triangulations\\ in linear time\thanks{\copyright 2026. This manuscript version is made available under the CC BY-NC-ND 4.0 license \url{https://creativecommons.org/licenses/by-nc-nd/4.0/}. The published journal article is available at \emph{Discrete Applied Math.}\ \textbf{386} (2026), 75--103, \href{https://doi.org/10.1016/j.dam.2026.01.032}{\texttt{doi:10.1016/j.dam.2026.01.032}}. An extended abstract of this work was published in: A.\ Bagchi, R.\ Muthu (eds.) Algorithms and Discrete Applied Mathematics, LNCS 13947, pp.\ 134--146, Springer, Cham (2023), \href{https://doi.org/10.1007/978-3-031-25211-2_10}{\texttt{doi:10.1007/978-3-031-25211-2\_10}}.}}

\author{Niranjan Balachandran\\
Department of Mathematics,\\
Indian Institute of Technology Bombay,\\
Powai, Mumbai 400076,\\
Maharashtra, India.\\
\texttt{niranj@math.iitb.ac.in}\\
\and
Brahadeesh Sankarnarayanan\thanks{This work was done while the author was at the Indian Institute of Technology Bombay, and was supported by the National Board for Higher Mathematics (NBHM), Department of Atomic Energy (DAE), Govt.\ of India.}\\
Department of Mathematics,\\
Indian Institute of Technology Jodhpur,\\
Karwar 342030, Jodhpur District,\\
Rajasthan, India.\\
\texttt{brahadeesh@iitj.ac.in}\\
}

\date{29 January, 2026}

\begin{document}
\maketitle

\begin{abstract}
We give an explicit procedure for $5$-list coloring a large class of toroidal $6$-regular triangulations in linear time.
We also show that these graphs are not $3$-choosable.\\

\noindent\textbf{MSC (2020):} 05C15 (Primary) 05C85, 05C10, 05C75 (Secondary)\\
\textbf{Keywords:} list coloring, toroidal graph, triangulation, regular graph, linear time algorithm

\end{abstract}

\newpage
\tableofcontents

\newpage
\section{Introduction}\label{S:Introduction}

We will be concerned with the following coloring variant known as \defining{list coloring}, defined independently by Vizing~\cite{Vizing1976} and by Erd{\H o}s, Rubin, and Taylor~\cite{ErdosRubinEtAl1980}.
A \defining{list assignment} \(\ListAsgn{L}\) on a graph \(G = (V, E)\) is a collection of sets of the form \(\ListAsgn{L} = \Set{ \List{L}{v} \subset \Nat : v \in V(G) }\), where one thinks of each \(\List{L}{v}\) as a \defining{list} of colors available for coloring the vertex \(v \in V(G)\).
A graph \(G\) is \defining{\(\ListAsgn{L}\)-choosable} if there exists a function \(\Color \colon V(G) \to \Nat\) such that \(\Color(v) \in \List{L}{v}\) for every \(v \in V(G)\) and \(\Color(v) \neq \Color(w)\) whenever \(vw \in E(G)\).
A graph \(G\) is called \defining{\(k\)-choosable} if it is \(\ListAsgn{L}\)-choosable for every \(k\)-list assignment \(\ListAsgn{L}\) (i.e., an assignment of lists of size at least \(k\), also called \(k\)-lists).
The least integer \(k\) for which \(G\) is \(k\)-choosable is the \defining{choice number}, or \defining{list chromatic number}, of \(G\) and is denoted \(\ch(G)\).
If \(\ch(G) = k\), we also say that \(G\) is \defining{\(k\)-list chromatic}.
Notice that the usual notion of graph coloring is equivalent to \(\ListAsgn{L}\)-coloring when all the lists assigned by \(\ListAsgn{L}\) are identical.
This also shows that \(\chr(G) \leq \ch(G)\) for all graphs \(G\), and in general the inequality can be strict~\cite{ErdosRubinEtAl1980,Vizing1976}.

\subsection{Motivation}
\subsubsection{\texorpdfstring{\(k\)}{k}-choosability is computationally hard}
It is well-known that computing the chromatic number is an \NP-hard problem~\cite{KleinbergTardos2014}.
The restricted problem of finding a \(4\)-coloring of a \(3\)-chromatic graph is also \NP-hard~\cite{KhannaLinialEtAl2000}.
Even the problem of \(3\)-colorability of \(4\)-regular planar graphs
is known to be \NP-complete~\cite{Dailey1980}.

Naturally, list coloring is also a computationally hard problem, but much more: for instance, it is well-known~\cite{Gutner1996} that the problem of deciding whether a given planar graph is \(4\)-choosable is \NP-hard---even if the \(4\)-lists are all chosen from \(\{1,2,3,4,5\}\)~\cite{DabrowskiDrossEtAl2019}---and so is deciding whether a given planar triangle-free graph is \(3\)-choosable~\cite{Gutner1996}.
But, contrast the latter with the fact that every planar triangle-free graph is \(3\)-colorable by Gr{\"o}tzsch's theorem~\cite{Grotzsch1958}, and that a \(3\)-coloring can be found in linear time~\cite{DvorakKawarabayashiEtAl2011}.
In other words, restrictions on graph parameters---such as the girth, as in Gr{\"o}tzsch's theorem---that allow for efficient coloring algorithms need to be strengthened further in order to get list coloring algorithms of a similar flavor.

Note that even proving nontrivial bounds for the choice number is far tougher than the corresponding problem for the chromatic number.
Some of the notable instances of such bounds being determined include Brooks's theorem for choosability~\cite{Vizing1976,ErdosRubinEtAl1980}, Thomassen's remarkable proof that every planar graph is \(5\)-choosable~\cite{Thomassen1994b}, and Galvin's solution to the famous Dinitz problem~\cite{Galvin1995}.
Other interesting examples include the fact that planar bipartite graphs are \(3\)-choosable~\cite{AlonTarsi1992} and that any \(4\)-regular graph decomposable into a Hamiltonian circuit and vertex-disjoint triangles is \(3\)-choosable~\cite{FleischnerStiebitz1992}. However, there is a fundamental difference between the former and latter examples, as we elaborate below.

\subsubsection{\texorpdfstring{\(\ListAsgn{L}\)}{L}-coloring is algorithmically hard}
Consider the problem: given a list assignment \(\ListAsgn{L}\) on a graph \(G\), can one efficiently determine whether or not \(G\) is \(\ListAsgn{L}\)-choosable, and in the case when \(G\) is \(\ListAsgn{L}\)-choosable can one also efficiently specify a proper coloring from these lists?
The theorems of Brooks, Thomassen and Galvin mentioned earlier are some of the few instances where such algorithms are known for a large class of graphs.
In the other examples that we mentioned, the proof uses the combinatorial nullstellensatz~\cite{Alon1999}, and in particular a powerful application is found by Alon and Tarsi~\cite{AlonTarsi1992}.
Hence, it does not allow one to extract an efficient algorithmic solution to the problem of \(\ListAsgn{L}\)-coloring when the list assignment \(\ListAsgn{L}\) is specified, except in certain special cases.
That there is no known efficient algorithm that produces a \(3\)-list coloring from a given list assignment in these examples illustrates the difficulty of the problem of efficiently finding a proper \(\ListAsgn{L}\)-coloring even for graphs of small maximum degree.
Even just for planar bipartite graphs, an algorithmic determination of a list coloring largely remains open~\cite{DabrowskiDrossEtAl2019}.

Hence, efficient \(\ListAsgn{L}\)-coloring algorithms for large classes of graphs are interesting.
We also place our work within the context of recent results on efficient list coloring algorithms for similar classes of graphs in Section~\ref{SS:related} below.

\subsection{Our work}

A natural choice of a large class of graphs for which list colorings have been studied is the class of graphs that are embeddable on a fixed surface (see~\cite{Mohar2015} and the references therein).
Here, by a \defining{surface} we mean a compact connected \(2\)-manifold, and a graph is \defining{embeddable} in a surface if, informally speaking, it can be drawn on the surface without any crossing edges (for further details, see~\cite{BalachandranSankarnarayanan2023}).
In this paper, we will be concerned only with \emph{toroidal graphs}, that is, graphs that are embeddable on the torus \(S_{1}\), which is the orientable surface of genus \(1\).

Let \(G = (V, E)\) be a toroidal graph, and let \(F\) be the set of its faces in an embedding into \(S_{1}\).
The graphs satisfying \(\degree(v) = d\) for all \(v \in V\) and \(\degree(f) = m\) for all \(f \in F\), for some \(d, m \geq 1\), have been of interest~\cite{Altshuler1972,Altshuler1973} especially in the study of vertex-transitive graphs~\cite{Babai1991,Thomassen1991}.
A simple calculation using Euler's formula shows that the only possible values of \((d, m)\) are \((3, 6)\), \((4, 4)\) and \((6, 3)\).
Our focus will be on the graphs of the last kind, namely the \(6\)-regular triangulations on the torus.
Since triangulations have the maximum possible number of edges in any graph with a fixed number of vertices and embeddable on a given surface, one might additionally expect this class of graphs to present a greater obstacle to an efficient solution to the list coloring problem as compared to the others.

The main result of this paper, Theorem~\ref{T:Main}, is a linear time algorithm for \(5\)-list coloring a large class of these toroidal \(6\)-regular triangulations.
Furthermore, the choice number of any graph in this class is at least \(4\), so our result is nearly tight for this class.
In fact, in Corollary~\ref{C:main} we find an infinite family of \(5\)-chromatic-choosable graphs for which a list coloring can be specified in linear time.

Let \(T(r, s, t)\) be a triangulation obtained from an \(r \times s\) toroidal grid, \(r, s \geq 1\) (see Definition~\ref{D:triangulation} for a precise statement):
\begin{theorem}\label{T:Main}
	Let \(G\) be a simple \(6\)-regular toroidal triangulation.
	Then, \(G\) is \(5\)-choosable under any of the following conditions:
	\begin{enumerate}[label=(\arabic*),nosep,noitemsep]
		\item\label{l1} \(G\) is isomorphic to \(T(r, s, t)\) for \(r \geq 4\);
		\item\label{l2} \(G\) is isomorphic to \(T(1, s, 2)\) for \(s \geq 9\), \(s \neq 11\);
		\item\label{l3} \(G\) is isomorphic to \(T(2, s, t)\) for \(s\) and \(t\) both even;
		\item\label{l4} \(G\) is \(3\)-chromatic.
	\end{enumerate}
	Moreover, the \(5\)-list colorings can be given in linear time.
	Furthermore, none of these graphs are \(3\)-choosable.
	Hence, \(\ch(G) \in \Set{4, 5}\) if any of the cases~\ref{l1} to~\ref{l4} hold for \(G\).
\end{theorem}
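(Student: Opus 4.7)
The plan is to exploit the product-like layered structure of $T(r,s,t)$ to reduce $5$-list-coloring to iteratively $3$-list-coloring cycles, which is always possible since every cycle has degeneracy $2$ and hence is $3$-choosable. Write $V(G) = \mathbb{Z}_r \times \mathbb{Z}_s$, and for $j \in \mathbb{Z}_s$ let $C_j = \Set{(i,j) : i \in \mathbb{Z}_r}$ be the $j$th column. The $6$-regularity of a toroidal triangulation implies that, for $r \geq 3$, each $C_j$ is an $r$-cycle and each vertex has exactly two neighbors in each of $C_{j-1}$, $C_j$, and $C_{j+1}$, with the pattern of cross-column edges governed by the twist $t$.

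For case~\ref{l1} I would color $G$ column by column. First fix an arbitrary proper $5$-list-coloring of the two adjacent columns $C_0$ and $C_1$; this is possible because $G[C_0 \cup C_1]$ is $4$-regular, hence has degeneracy at most $4$ and is $5$-choosable. Then, for $j = 2, 3, \dots, s-2$, every vertex of $C_j$ has exactly two precolored neighbors (lying in $C_{j-1}$), so at least three colors remain in its list, and the $r$-cycle $C_j$ can be properly list-colored from these residual $3$-lists. The central obstacle is the \emph{closing} of the cyclic strip: coloring $C_{s-1}$ requires consistency with both $C_{s-2}$ and $C_0$, so each vertex has four precolored neighbors and may have a residual list of just one color. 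The hypothesis $r \geq 4$ provides enough flexibility along $C_{s-1}$ to perform a corrective adjustment---for instance by planning the initial choice on $C_0, C_1$ so that the residual lists on $C_{s-1}$ admit a proper cycle coloring, or by rotating colors along a short path to repair local conflicts. This balancing step is where the bulk of the case analysis must concentrate, and where the twist $t$ enters nontrivially.

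Cases~\ref{l2} and~\ref{l3} demand separate treatment because the columns there are not genuine cycles. For $T(1, s, 2)$ the graph is a circulant on $\mathbb{Z}_s$, and I would handle it by a direct greedy scan with residue bookkeeping; the exceptional values $s \leq 8$ and $s = 11$ plausibly correspond to short residue cycles where no consistent wraparound exists. For $T(2, s, t)$ with $s, t$ even, the parity conditions should supply a compatible bipartition of $V(G)$ enabling a two-layer extension argument. For case~\ref{l4}, a proper $3$-coloring combined with the $5 - 3 = 2$ slack colors should permit a greedy completion of any $5$-list-coloring. In all cases each vertex is touched a bounded number of times, yielding $\Bigoh{rs}$-time algorithms.

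Finally, the non-$3$-choosability should be established by exhibiting explicit $3$-list assignments with no proper coloring---typically small obstruction gadgets, such as $K_4$-subgraphs (forcing $\chr \geq 4$) or odd-wheel-like list configurations embedded periodically into the torus. I expect the hardest step of the entire proof to be the compatibility argument for closing the toroidal strip in case~\ref{l1}: one must show that the initial coloring of $C_0 \cup C_1$ can always be arranged so that, after the freely propagated coloring of $C_2, \dots, C_{s-2}$, the residual lists on $C_{s-1}$ admit a proper cycle coloring, which will require a careful structural analysis of how the twist $t$ interacts with the residue cascade.
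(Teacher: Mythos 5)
Your high-level plan for case~\ref{l1}---color two adjacent cyclic layers, propagate \(3\)-lists across the remaining layers, then somehow close the toroidal strip---correctly locates the difficulty but does not resolve it, and the closing step is precisely where the entire content of the paper's argument lives. Two concrete problems. First, your structural setup is off: in \(T(r,s,t)\) the sets \(\Set{(i,j) : i \in \mathbb{Z}_r}\) with fixed second coordinate are not \(r\)-cycles in general; because of the twist \(t\), the ``horizontal'' normal circuits have length \(rs/\gcd(s,t)\) and pass through several such sets. The cycles one can propagate along are the columns \(C_i\) of length \(s\), and the hypothesis \(r \geq 4\) counts how many such columns there are, not their length. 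Second, and more seriously, ``planning the initial choice'' or ``rotating colors along a short path'' is not an argument: after propagation the last column can be left with arbitrary conflicting \(1\)-lists, and no local repair is exhibited. The paper instead proves a cylinder lemma (Lemma~\ref{L:N-algorithm}: \(C(r,s)\) is colorable when one boundary cycle has \(3\)-lists except for two adjacent \(4\)-lists, the other boundary has \(3\)-lists, and the interior has \(5\)-lists), and then spends Sections~\ref{S:Preparation} and~\ref{S:Main} on a ``list calculus'' that reduces the possible configurations of \(5\)-lists on nearby vertices to a handful of cases (criteria~\ref{criterion2.1} to~\ref{criterion2.4} and Lemmas~\ref{L:three-criteria} to~\ref{L:with-iso}); this classification is what allows \(C_1\) to be colored together with alternate vertices of \(C_3\) so that exactly one vertex of \(C_2\) retains a \(4\)-list, which is the input the cylinder lemma needs. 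None of that machinery is present in, or replaceable by, your sketch.

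The other cases have similar gaps. For case~\ref{l4}, a proper \(3\)-coloring plus ``two slack colors'' does not permit greedy completion: each vertex has six neighbours and only a \(5\)-list, so greedy fails outright; the paper instead reduces to list-coloring a \(3\)-regular bipartite graph with \(3\)-lists on one side and \(2\)-lists on the other, via a perfect matching and a kernel argument (Lemma~\ref{L:fix} together with Lemma~\ref{L:BBS}). For the non-\(3\)-choosability, \(K_4\)-gadgets cannot work for the \(3\)-chromatic members of the family, which have clique number \(3\); the paper's obstruction is global: assigning the lists \(\Set{1,2,3}\), \(\Set{2,3,4}\), \(\Set{1,3,4}\) to blocks of columns forces any proper coloring to implement the relabelling \((1,2,3)\mapsto(2,1,3)\), which is not a cyclic permutation and therefore cannot close up around the torus. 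Cases~\ref{l2} and~\ref{l3} are stated only as intentions (``greedy scan with residue bookkeeping'', ``a compatible bipartition''), whereas the actual proofs require a specific \(K_4^-\) list-reduction lemma (Lemma~\ref{L:small}) for case~\ref{l3} and a delicate endgame on six residual vertices for case~\ref{l2}.
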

We note that we have proved case~\ref{l4} in a previous paper~\cite{BalachandranSankarnarayanan2021}, albeit without any efficient algorithm for \(5\)-choosability.

We are currently unable to comment on the choosability of the excluded graphs, but we note that they consist only of eight nonisomorphic \(5\)-chromatic graphs, as well as a subcollection of triangulations of the specific form \(T(1, s, t)\) that are \(4\)-chromatic.
For any tuple \((r, s, t)\), there is a simple formula describing each tuple \((r', s', t')\) such that \(T(r, s, t)\) is isomorphic to \(T(r', s', t')\) (see~\cite{Altshuler1972,Sankarnarayanan2022}), and there are at most \(6\) such tuples for any \((r, s, t)\).
It is also not difficult to see that the loopless multigraphs \(T(r, s, t)\) are all \(5\)-choosable.
So, in this sense, Theorem~\ref{T:Main} covers the \(5\)-choosability of ``most'' \(6\)-regular toroidal triangulations.
Furthermore, among those graphs covered in Theorem~\ref{T:Main}, the \(5\)-chromatic ones are precisely those isomorphic to \(T(1, s, 2)\) for \(s \not\equiv 0 \pmod{4}\).
Thus, we have:
\begin{corollary}\label{C:main}
	If \(G\) is isomorphic to \(T(1, s, 2)\) for \(s \not\equiv 0 \pmod{4}\), \(s \geq 9\), \(s \neq 11\), then \(G\) is \(5\)-chromatic-choosable, i.e. \(\chr(G) = \ch(G) = 5\).
	Moreover, a \(5\)-list coloring can be found in linear time.
\end{corollary}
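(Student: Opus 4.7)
The plan is to sandwich $\ch(G)$ between a chromatic-number lower bound and the upper bound supplied by Theorem~\ref{T:Main}(2). The upper bound, and with it the linear-time algorithmic claim, is immediate: Theorem~\ref{T:Main}(2) applies to $G = T(1, s, 2)$ whenever $s \geq 9$ and $s \neq 11$, yielding $\ch(G) \leq 5$ together with a $5$-list-coloring constructible in linear time. Since $\chr(G) \leq \ch(G)$ holds for every graph, it remains only to establish the matching lower bound $\chr(G) \geq 5$ under the additional hypothesis $s \not\equiv 0 \pmod{4}$.

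For the lower bound, I would unpack Definition~\ref{D:triangulation} to exhibit $T(1, s, 2)$ as the third power $C_{s}^{3}$ of the $s$-cycle: its vertex set may be taken as $\mathbb{Z}/s$ with vertex $j$ adjacent precisely to $j \pm 1$, $j \pm 2$, $j \pm 3$. Any independent set of $C_{s}^{3}$ consists of vertices pairwise at cyclic distance at least $4$, hence has cardinality at most $\floor{s/4}$; a partition of $V(G)$ into color classes therefore requires at least $\lceil s / \floor{s/4} \rceil$ classes, giving
\[
\chr(G) \;\geq\; \left\lceil \frac{s}{\floor{s/4}} \right\rceil.
\]
A short case check confirms that this quantity is at least $5$ for every $s \geq 9$ with $s \not\equiv 0 \pmod{4}$ (with the value actually equalling $6$ at $s = 11$, matching the exclusion from Theorem~\ref{T:Main}(2)).

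Combining the two directions gives $5 \leq \chr(G) \leq \ch(G) \leq 5$, so $\chr(G) = \ch(G) = 5$, and the linear-time assertion is inherited verbatim from Theorem~\ref{T:Main}(2). The only substantive step is the identification of $T(1, s, 2)$ with $C_{s}^{3}$, which follows by direct inspection of the triangulation definition; the ensuing chromatic calculation is an elementary pigeonhole argument, and I do not anticipate any hidden obstacle.
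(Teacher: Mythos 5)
Your proposal is correct, and it reaches the lower bound by a genuinely different route than the paper. The upper bound and the linear-time claim are handled identically in both: they are read off from case~(2) of Theorem~\ref{T:Main}. For the lower bound \(\chr(G) \geq 5\), however, the paper simply invokes the Yeh--Zhu classification of the \(5\)-chromatic \(6\)-regular toroidal triangulations (cited in Section~\ref{SS:T(1st)}), whereas you give a short self-contained argument: \(T(1,s,2)\) is, by Definition~\ref{D:triangulation} with \(t=2\), exactly the cube \(C_{s}^{3}\) of the \(s\)-cycle, its independence number is \(\floor{s/4}\) (gaps between chosen vertices must each be at least \(4\) and sum to \(s\)), and writing \(s = 4k + \rho\) with \(\rho \in \{1,2,3\}\) gives \(s/\floor{s/4} = 4 + \rho/k > 4\), hence \(\chr(G) \geq \lceil s/\floor{s/4}\rceil \geq 5\); combined with \(\chr(G) \leq \ch(G) \leq 5\) this closes the sandwich. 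Your computation checks out for every \(s \geq 9\) with \(s \not\equiv 0 \pmod 4\), and the value \(6\) at \(s = 11\) is consistent with that graph being the \(6\)-chromatic Albertson--Hutchinson triangulation. What each approach buys: yours removes the dependence on an external classification theorem for this corollary and needs only the easy inequality \(\chr \geq \lvert V\rvert/\alpha\); the paper's citation of Yeh--Zhu additionally supplies the converse direction (that these are essentially the \emph{only} \(5\)-chromatic members of the family), which the paper uses elsewhere in its discussion but which is not needed for the corollary itself.
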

To the best of our knowledge, the method of proof that we employ is novel, in that we develop a framework that allows us to systematically compare the lists on vertices that are not too far apart, and that allows us to compute the list coloring in an efficient manner. By using the differential information between lists on nearby vertices, we reduce the \emph{list configurations} that need to be considered. This kind of ``list calculus'' differs from other list coloring algorithms in the literature, which instead reduce the possible \emph{graph configurations} by exploiting general structure results on the family of graphs under consideration (minimum girth, edge-width, etc.), while the specific lists on the graphs remain nebulous. Our method of proof could prove fruitful in other areas where a structure theorem---such as Theorem~\ref{T:Altshuler} in our case---allows one to shift attention towards the configuration of the lists themselves.
We also emphasize that our linear-time algorithm for \(5\)-list coloring these graphs is nearly best possible, since any fixed vertex needs to be ``scanned'' very few times.

\subsection{Related work}\label{SS:related}
\subsubsection{Colorability vs.\ choosability}
Note that it follows from Brooks's theorem for choosability that any \(6\)-regular toroidal triangulation not isomorphic to \(K_{7}\) is \(6\)-choosable.
Albertson and Hutchinson~\cite{AlbertsonHutchinson1980} showed that there is a unique simple graph in this family that is \(6\)-chromatic, which has \(11\) vertices, and Thomassen~\cite{Thomassen1994a} later classified all the \(5\)-colorable toroidal graphs.
But a precise characterization of all the \(5\)-chromatic \(6\)-regular toroidal triangulations was completed only recently~\cite{CollinsHutchinson1999,YehZhu2003,Sankarnarayanan2022}.
Our results are the first in this line to attempt to characterize the list colorability of the \(6\)-regular triangulations on the torus.

\subsubsection{Choosability of grids}
The problem of determining the choice number of \(4\)-regular toroidal \(m \times n\) grids, for \(m, n \geq 3\), has been raised by Cai, Wang and Zhu~\cite{CaiWangEtAl2010}.
These graphs are a special case of those satisfying \((d, m) = (4, 4)\).
It is easy to show by induction that these grids are all \(3\)-colorable, and the above authors conjecture that they are also \(3\)-choosable.
Recent work by Li, Shao, Petrov and Gordeev~\cite{LiShaoEtAl2023} has nearly determined the choice number of these grids as follows: if \(mn\) is even, then the choice number is \(3\), else it is either \(3\) or \(4\).
Contrasting this with Theorem~\ref{T:Main}, we note that both nearly determine the choice number in the sense that the true value of the choice number is either equal to, or one less than, the computed value for each member of the family.
However, their result does not a priori give an efficient algorithm for \(\ListAsgn{L}\)-coloring the toroidal grids since their proof uses the combinatorial nullstellensatz, whereas our result actually gives a linear time algorithm for \(\ListAsgn{L}\)-coloring the toroidal triangulations.

\subsubsection{Recent algorithmic advances for list colorings of graphs on surfaces}
Dvo\v{r}\'{a}k and Kawarabayashi \cite{DvorakKawarabayashi2013} have shown that for \(\ListAsgn{L}\)-coloring a graphs embedded on a fixed surface \(\Sigma\), where \(\ListAsgn{L}\) is a \(5\)-list assignment, there exists a \(O(\card{V(G)}^{O(g(\Sigma)+1)})\)-time algorithm, where \(g(\Sigma)\) is the genus of the surface \(\Sigma\).
Postle and Thomas~\cite{PostleThomas2018} have proved that for any surface \(\Sigma\) and every \(k \in \Set{3, 4, 5}\) there exists a linear time algorithm for determining whether or not an input graph \(G\) embedded in \(\Sigma\) and having girth at least \(8 - k\) is \(k\)-choosable.
In particular, when \(\Sigma = S_{1}\) and \(k = 5\), this implies that there is a linear time algorithm for determining whether or not any of the \(6\)-regular triangulations under consideration in this paper are \(5\)-choosable.
This work was later extended by Postle in~\cite{Postle2019}, wherein he showed that for each fixed surface \(\Sigma\) there exists a linear time algorithm to find a \(k\)-list coloring of a graph \(G\) with girth at least \(8 - k\) for \(k \in \{3, 4, 5\}\).
Again, when \(\Sigma = S_{1}\) and \(k = 5\), this says that there is a linear time algorithm to find a \(5\)-list coloring of a \(6\)-regular triangulation on the torus.

Our results in this paper are stronger than those mentioned above for the class of \(6\)-regular toroidal triangulations.
Firstly, the high degree of the polynomial time algorithm in~\cite{DvorakKawarabayashi2013} makes it impractical to implement, though the authors suggest that it should likely be possible to reduce the bound enough to make the algorithm practical at least for planar graphs.
Secondly, the linear time algorithm in~\cite{PostleThomas2018} is contingent upon an enumeration of the \defining{\(6\)-list critical} graphs on the torus.
Indeed, the authors show that there are only finitely many \(6\)-list critical graphs on the torus, but a full list of these graphs is not explicitly known, and their bound on the maximum number of vertices any \(6\)-list critical graph on the torus can have is far too large to be amenable to a straightforward enumerative check.\footnote{It is worth contrasting this with the corresponding colorability problem: while Thomassen~\cite{Thomassen1997} has shown that for every fixed surface there are only finitely many \defining{\(6\)-critical} graphs that embed on that surface, explicit lists of these \(6\)-critical graphs are known only for the projective plane~\cite{AlbertsonHutchinson1979}, the torus~\cite{Thomassen1994a} and the Klein bottle~\cite{ChenettePostleEtAl2012,KawarabayashiKralEtAl2009}.}
Also, their linear time algorithm does not specify an \(\ListAsgn{L}\)-coloring in the case when the graph is \(\ListAsgn{L}\)-choosable for a given list assignment \(\ListAsgn{L}\).
Thirdly, the linear time algorithm in~\cite{Postle2019} first requires a brute-force computation of the list colorings for any such list assignment on graphs of ``small'' order.
However, the bound on the sizes of these small graphs is far too large to be computationally feasible, which makes the algorithm itself of mostly theoretical interest, as noted in a recent work by Dvo\v{r}\'{a}k and Postle~\cite{DvorakPostle2022}.

This is in contrast with the results in this paper, wherein the \(5\)-choosable graphs identified in Theorem~\ref{T:Main} can also be given \(5\)-list colorings in linear time, unlike as in~\cite{PostleThomas2018}.
Furthermore, the non-\(3\)-choosability of the  \(3\)-chromatic graphs \(T(r, s, t)\) is not covered by the results in~\cite{PostleThomas2018} since these graphs have girth equal to \(3\), whereas their algorithm for \(3\)-list coloring is applicable only for graphs having girth at least \(5\).
Lastly, our proof of Theorem~\ref{T:Main} supplies an implementable algorithm for \(5\)-list coloring all the toroidal graphs under consideration without the need for running a brute-force check on any of them, in contrast with~\cite{Postle2019}.

\subsection*{Structure of this paper}
In Section~\ref{S:Preliminaries}, we setup the necessary preliminary material; in particular, Lemmas~\ref{L:cycle-choosability} and \ref{L:N-algorithm} will be used at several points in the rest of the paper.
We provide an overview of the proof of Theorem~\ref{T:Main} in Section~\ref{S:Overview}.
In Section~\ref{S:Preparation}, we prove a succession of technical lemmas to prepare the proof of case~\ref{l1} in Theorem~\ref{T:Main}. The proof of this case is completed in Section~\ref{S:Main}. In Section~\ref{S:Sub}, we prove cases~\ref{l2} and \ref{l3} in Theorem~\ref{T:Main}. In Section~\ref{S:baaki}, we prove case~\ref{l4} in Theorem~\ref{T:Main}. In Section~\ref{S:Conclusion}, we analyze the remaining cases not covered by Theorem~\ref{T:Main} and conclude with some conjectures concerning their choosability.

\section{Preliminaries}\label{S:Preliminaries}
Altshuler~\cite{Altshuler1972,Altshuler1973} showed that every \(6\)-regular toroidal triangulation \(G\) can be described as a regular triangulation obtained from an \(r \times s\) toroidal grid in which the edges between the first and last column are connected by a shift of \(t\) vertices. Concretely:
\begin{definition}\label{D:triangulation}
For integers \(r \geq 1\), \(s \geq 1\) and \(0 \leq t \leq s - 1\),
take \(V = \{ (i, j) : 1 \leq i \leq r, 1 \leq j \leq s \}\)
to be the vertex set of the graph \(T(r, s, t)\) equipped with the following edges:
\begin{itemize}
	\item For each \(1 < i < r\), \((i, j)\) is adjacent to
	\((i, j \pm 1)\), \((i \pm 1, j)\) and
	\((i \pm 1, j \mp 1)\).
	
	\item If \(r > 1\), \((1, j)\) is adjacent to \((1, j \pm 1)\), \((2, j)\),
	\((2, j - 1)\), \((r, j + t + 1)\) and \((r, j + t)\). 
	
	\item If \(r > 1\), \((r, j)\) is adjacent to \((r, j \pm 1)\),
	\((r - 1, j + 1)\), \((r - 1, j)\), \((1, j - t)\) and \((1, j - t - 1)\).
	
	\item If \(r = 1\), \((1, j)\) is adjacent to \((1, j \pm 1)\), \((1, j \pm t)\)
	and \((1, j \pm (t + 1))\).
\end{itemize}
\end{definition}
Here, addition in the first coordinate is taken modulo \(r\) and
in the second coordinate is taken modulo \(s\).
Figure~\ref{F:T(562)} depicts the graph \(G = T(5, 6, 2)\);
note that the edges between the top and bottom rows are not shown in this and all subsequent figures.

We shall use the notation \(C_{i}\), for \(1 \leq i \leq r\), to denote the induced subgraph of
\(T(r, s, t)\) on the \(i\)th column of \(T(r, s, t)\), that is, on the set of vertices
\(\Set{ (i, j) : 1 \leq j \leq s }\). Note that
each \(C_{i}\) is a cycle of length \(s\) when \(r > 1\).

\begin{figure}
	\centering
	\begin{tikzpicture}[font=\scriptsize]
		\draw (1,1) grid (6,6);
		\foreach \y in {2,...,6}
		\foreach \x in {1,...,5}{
			\draw (\x,\y) -- (\x+1,\y-1);
		}
		\foreach \x in {1,...,6}
		\foreach \y in {1,...,6}{
			\draw[fill=black] (\x,\y) circle(2pt);
		}
		\foreach \y in {1,...,6}{
			\node[anchor=east] at (1,\y) {(1,\y)};
			\pgfmathparse{Mod(\y-5,6)}
			\node[anchor=west] at (6,\pgfmathresult+1) {(1,\y)};
		}
		\node[anchor=south west] at (5,2) {\!\!(5,2)};
		\node[anchor=south west] at (5,3) {\!\!(5,3)};
		\node[anchor=south west] at (5,4) {\!\!(5,4)};
		\node[anchor=south west] at (5,5) {\!\!(5,5)};
		\foreach \x in {2,...,5}{
			\node[anchor=north] at (\x,1) {(\x,1)};
			\node[anchor=south west] at (\x,6) {\!\!(\x,6)};
		}
	\end{tikzpicture}
	\caption{\(G = T(5,6,2)\); the edges between the top and bottom rows are not shown in this
and all subsequent figures.}\label{F:T(562)}
\end{figure}

It is clear that each \(T(r,s,t)\) is a \(6\)-regular triangulation of the torus.
Altshuler's theorem says that these are all the \(6\)-regular triangulations on the torus up to isomorphism (similar constructions also appear in \cite{Negami1983,Thomassen1991}).
\begin{theorem}[Altshuler~\cite{Altshuler1973}, 1973]\label{T:Altshuler}
	Every \(6\)-regular triangulation on the torus is isomorphic to \(T(r, s, t)\) for some integers \(r \geq 1\), \(s \geq 1\), and \(0 \leq t < s\).
\end{theorem}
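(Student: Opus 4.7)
The plan is to pass to the universal cover of the torus. Since the universal cover of \(S_{1}\) is \(\mathbb{R}^2\), lifting \(G\) produces a simply connected \(6\)-regular triangulation \(\widetilde{G}\) of \(\mathbb{R}^2\) on which the deck group \(\Gamma \cong \pi_1(S_{1}) \cong \mathbb{Z}^2\) acts freely with quotient \(G\). Realize the standard triangular lattice \(\mathbb{T}\) as the graph on \(\mathbb{Z}^2\) with \((i, j) \sim (i', j')\) iff \((i' - i,\, j' - j) \in \Set{\pm(1, 0),\, \pm(0, 1),\, \pm(1, -1)}\), which is itself a \(6\)-regular triangulation of \(\mathbb{R}^2\). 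I would first prove a rigidity statement \(\widetilde{G} \cong \mathbb{T}\) as abstract graphs, then argue that \(\Gamma\) acts on this copy of \(\mathbb{T}\) by pure translations, and finally normalise the resulting translation lattice via Hermite normal form.

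For rigidity, pick a vertex \(v_0 \in V(\widetilde{G})\), a neighbor \(v_1\) of \(v_0\), and a common neighbor \(v_2\) of \(v_0\) and \(v_1\) (available since \(v_0 v_1\) lies on a triangle), and map \((v_0, v_1, v_2) \mapsto ((0,0), (1,0), (0,1))\). Because every vertex has degree \(6\) and every face is a triangle, the link of each vertex is a \(6\)-cycle; fixing two consecutive positions in the link of \(v_0\) forces a unique labelling of the remaining four neighbors of \(v_0\), and recursively of every vertex of \(\widetilde{G}\). Simple connectedness of \(\widetilde{G}\) ensures that this propagation is consistent along every closed walk, producing a graph isomorphism \(\widetilde{G} \to \mathbb{T}\).

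Transferring the \(\Gamma\)-action through this isomorphism gives a subgroup \(\Gamma \subset \mathrm{Aut}(\mathbb{T}) = \mathbb{Z}^2 \rtimes D_6\). Since deck transformations of \(\mathbb{R}^2 \to S_{1}\) are orientation-preserving and fixed-point-free, and the only such elements of \(\mathrm{Aut}(\mathbb{T})\) are pure translations, \(\Gamma\) is realised as a rank-\(2\) sublattice \(\Lambda \subset \mathbb{Z}^2\) and \(G \cong \mathbb{T}/\Lambda\). Hermite normal form then yields a unique \(\mathbb{Z}\)-basis \(\Set{(r, t),\, (0, s)}\) of \(\Lambda\) with \(r, s \geq 1\) and \(0 \leq t < s\). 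The induced identifications \((i, j + s) \sim (i, j)\) and \((i + r, j + t) \sim (i, j)\) on \(\mathbb{T}\) match precisely the row-wrap and shifted column-wrap in Definition~\ref{D:triangulation}, yielding the required isomorphism \(G \cong T(r, s, t)\).

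The main obstacle is the rigidity step: although the local picture at each vertex is completely determined by the \(6\)-regular triangulation hypothesis, upgrading this to a global isomorphism with \(\mathbb{T}\) requires a careful induction along an exhaustion of \(\widetilde{G}\) by finite simply connected subcomplexes, with consistency of the labellings verified on every overlap using the rigidity of the link at each newly processed vertex. The remaining steps—the orientability and freeness argument ruling out nontrivial elements of \(D_6\), and the application of Hermite normal form to a rank-\(2\) sublattice of \(\mathbb{Z}^2\)—are then essentially routine.
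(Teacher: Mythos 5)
The paper does not actually prove this statement: it is quoted from Altshuler's 1973 paper, and Altshuler's own argument is purely combinatorial, running through the normal circuits that the authors describe immediately after the theorem (one shows that the straight-ahead walk through a vertex closes up into a simple cycle, that these cycles partition the vertex set into \(r\) columns of a common length \(s\), and that the identification of the last column with the first is a shift by some \(t\)). Your covering-space route is a genuinely different, and also classical, alternative, closer in spirit to the treatments in \cite{Negami1983,Thomassen1991} cited alongside Definition~\ref{D:triangulation}. What it buys is conceptual uniformity: the same three-step template (rigidity of the universal cover, classification of the deck group inside \(\mathbb{Z}^2 \rtimes D_6\), Hermite normal form for the translation lattice) also disposes of the \((4,4)\) and \((3,6)\) toroidal cases and the Klein-bottle analogues. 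The price is the planar rigidity lemma and some covering-space bookkeeping that the normal-circuit argument avoids entirely.

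Two points in your sketch need more care. First, in the rigidity step it is not enough that the labelling propagates consistently (simple connectedness does give you that, by a monodromy argument around faces and vertex links); you must also show that the resulting developing map \(\widetilde{G} \to \mathbb{T}\) is bijective. Surjectivity is easy since the image is closed under passing to links, but injectivity is a genuine statement (it is exactly where ``flat and complete'' enters) and is usually obtained by showing the developing map is a covering map onto the simply connected target. Second, your exclusion of non-translations from \(\Gamma\) via ``orientation-preserving and fixed-point-free'' is valid only if fixed-point-freeness refers to the action on the plane, not on the vertex set: a rotation by \(2\pi/3\) about a face centre of \(\mathbb{T}\) is orientation-preserving and fixes no vertex, yet is not a translation. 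Either invoke freeness of the deck action on all of \(\mathbb{R}^2\), or argue instead that \(\Gamma \cong \mathbb{Z}^2\) is torsion-free while every orientation-preserving non-translation in \(\mathbb{Z}^2 \rtimes D_6\) has finite order. With these repairs the argument is sound, and the Hermite normal form step does reproduce the identifications of Definition~\ref{D:triangulation} exactly as you claim.
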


Altshuler also showed~\cite{Altshuler1972,Altshuler1973} that through every vertex \(v\) of \(T(r, s, t)\) there are three \defining{normal circuits}, which are the simple cycles obtained by traversing through \(v\) along each of the three directions (vertical, horizontal, and diagonal) in the natural fashion.
These normal circuits have lengths \(s\), \(n/\gcd(s, t)\), and \(n/\gcd(s, r + t)\), respectively, where \(n = rs\) is the order of \(T(r, s, t)\).

By picking a different normal circuit to be represented as the vertical cycle, one can see that there exist integers \(t_1, t_2\) such that \(0 \leq t_{1} < n/\gcd(s, t)\) and \(0 \leq t_{2} < n/\gcd(s, r + t)\) and \(T(r, s, t)\) is isomorphic to \(T\bigl(\gcd(s, t), n/\gcd(s, t), t_{1}\bigr)\) as well as to \(T\bigl(\gcd(s, r + t), n/\gcd(s, r + t), t_{2}\bigr)\).
Similarly, by swapping the horizontal and diagonal normal circuits, one can see that \(T(r, s, t)\) is isomorphic to \(T(r, s, t')\) where \(0 \leq t' < s\) such that \(t' \equiv - r - t \pmod{s}\).

The following lemma is useful in simplifying arguments through the use of
symmetry:

\begin{lemma}\label{L:auto}
	Let \(r \geq 1\), \(s \geq 1\) and \(0 \leq t \leq s - 1\). The map
	\((i, j) \mapsto (r - i + 1, s - j + 1)\) on \(V\bigl(T(r, s, t)\bigr)\)
	induces an automorphism of \(T(r, s, t)\).
\end{lemma}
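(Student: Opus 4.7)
The plan is a direct verification that the map $\phi\colon (i, j) \mapsto (r+1-i,\, s+1-j)$ preserves adjacencies. Since $\phi$ is visibly an involution on $V(T(r,s,t))$, it is already a bijection on vertices, so only edge preservation needs attention. Conceptually, $\phi$ is the ``half-turn'' rotation of the fundamental rectangle $[1,r]\times[1,s]$, and the six edge-directions of the triangulation, namely $\pm(0,1)$, $\pm(1,0)$, and $\pm(1,-1)$, form a set that is symmetric under negation. So for any interior vertex $(i,j)$ with $1 < i < r$, the six neighbors of $\phi(i,j)$ (also an interior vertex, since $1 < r+1-i < r$) are exactly the images under $\phi$ of the six neighbors of $(i,j)$: for each step-vector $\delta$ from the above list, $\phi\bigl((i,j)+\delta\bigr)$ equals $\phi(i,j)-\delta$, and $-\delta$ lies in the same set.

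For the boundary columns, $\phi$ exchanges column $1$ with column $r$. First I would verify the ``nearby'' neighbors of $(1,j)$, namely $(1, j\pm 1)$, $(2,j)$, and $(2,j-1)$: their $\phi$-images sit in columns $r$ and $r-1$ and, after substituting $j' := s+1-j$, match exactly the items from Definition~\ref{D:triangulation} that list the nearby neighbors of $(r, j')$. The one point requiring care is the pair of wrap-around neighbors $(r,j+t)$ and $(r,j+t+1)$ of $(1,j)$: their $\phi$-images are $(1, s-j-t+1)$ and $(1, s-j-t)$, while the wrap-around neighbors prescribed for $\phi(1,j) = (r, s-j+1)$ are $(1, (s-j+1)-t)$ and $(1, (s-j+1)-t-1)$, which agree. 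In other words, the $+t$ shift in the gluing from column~$1$ into column~$r$ and the $-t$ shift in the gluing from column~$r$ into column~$1$ are exchanged precisely by $\phi$, because $\phi$ negates the $j$-coordinate (up to the fixed offset $s+1$).

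Finally, the case $r = 1$ is essentially trivial, because every neighbor of $(1, j)$ is obtained by adding a scalar from the negation-symmetric set $\{\pm 1, \pm t, \pm(t+1)\}$ to the second coordinate, and $\phi$ acts by negation (up to offset) in that coordinate. The only conceivable obstacle in the whole argument is keeping the boundary casework bookkeeping straight; but the shifts by $\pm t$ cancel cleanly through the coordinate negation, so the verification reduces to substitution and the lemma is a routine check.
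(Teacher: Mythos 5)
Your verification is correct, and the paper in fact states Lemma~\ref{L:auto} without proof, treating it as exactly the routine coordinate check you carry out: the interior step-vectors form a negation-symmetric set, and the $+t$ and $-t$ wrap-around shifts between columns $1$ and $r$ are interchanged by the negation of the $j$-coordinate. Your computation of the images of the wrap-around neighbors matches the lists in Definition~\ref{D:triangulation}, so the argument is complete.
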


In particular, this automorphism reverses the ordering of the rows (as well as of the columns).

We will need the following theorem on finding matchings in regular
bipartite graphs.

\begin{definition}
	A \defining{matching} in a graph \(G = (V, E)\) is a subset
	\(M\) of \(E\) such that no two edges in \(M\) have a common
	vertex. A matching is said to be \defining{perfect} if
	every vertex \(v \in V\) belongs to some edge in the matching.
\end{definition}

\begin{theorem}[Cole--Ost--Schirra~\cite{ColeOstEtAl2001}, 2001]\label{T:matching}
	Let \(G = (V, E)\) be a regular bipartite graph. Then,
	a perfect matching \(M\) of \(G\) can be found in \(O(\card{E})\) time.
\end{theorem}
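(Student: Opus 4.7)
The plan is to prove this classical result constructively, following the approach of Cole, Ost, and Schirra which rests on Eulerian partitioning combined with careful amortization. The first step is to verify \emph{existence} of a perfect matching: a $d$-regular bipartite graph $G = (X \cup Y, E)$ satisfies Hall's condition by a simple double counting (for any $S \subseteq X$, counting edges incident to $S$ yields $d\card{S} \leq d\card{N(S)}$, hence $\card{S} \leq \card{N(S)}$), so Hall's theorem delivers a perfect matching and the algorithmic task reduces to producing one in $\Bigoh{\card{E}}$ time.

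Next, I would attack the case when $d$ is a power of $2$ by recursive halving via Eulerian tours. Since every vertex of $G$ has even degree, each connected component is Eulerian, and an Euler circuit can be computed in linear time by Hierholzer's algorithm. Walking around each circuit and alternately placing successive edges into two classes $E_{1}$ and $E_{2}$ produces two edge-disjoint spanning $(d/2)$-regular bipartite subgraphs, because each traversal of a given vertex contributes exactly one edge to each class. Recursing on $E_{1}$ until the degree drops to $1$ returns a perfect matching in total time $T(\card{E}) = T(\card{E}/2) + \Bigoh{\card{E}} = \Bigoh{\card{E}}$.

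The main obstacle is handling general $d$ that is not a power of $2$, where the recursive halving does not directly apply. My plan here would be to reduce to the power-of-$2$ regime by peeling off auxiliary matchings on residual subgraphs, but a naive implementation of this peeling incurs an extra $\log d$ factor, giving only the earlier Gabow--Kariv bound of $\Bigoh{\card{E} \log d}$. The resolution, and the main innovation of Cole--Ost--Schirra, is a potential-function amortization that charges each peeling step against a geometric decay in the size of the residual graph so that the telescoping sum of costs remains $\Bigoh{\card{E}}$. Since the present paper invokes this theorem only as a black box, I would not reproduce the full amortized analysis and would defer to~\cite{ColeOstEtAl2001} for its delicate combinatorial bookkeeping.
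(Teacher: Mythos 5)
The paper does not prove this statement: Theorem~\ref{T:matching} is quoted from Cole--Ost--Schirra and used purely as a black box (its only role is in the proof of Lemma~\ref{L:fix}), so there is no in-paper argument to compare yours against. The parts of your sketch that you actually carry out are correct: Hall's condition follows from the double count, and when \(d\) is a power of two the Euler-partition recursion \(T(\card{E}) = T(\card{E}/2) + \Bigoh{\card{E}}\) yields a perfect matching in linear time (this much was already classical, e.g.\ via Gabow's Euler-partition technique, and does not require the 2001 paper).

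The gap is that the general case---which is the entire content of the theorem---is not proved but only gestured at. ``Peeling off auxiliary matchings on residual subgraphs'' is circular as stated: producing even one matching of a residual regular graph is exactly the problem being solved, and you supply no mechanism for the claimed geometric decay, no potential function, and no argument that each peeling step costs only what it removes. It is also not an accurate description of how Cole--Ost--Schirra (or the closely related argument of Alon) proceed: the standard route is to scale \(G\) up to an \(\alpha\)-regular bipartite multigraph for \(\alpha\) a sufficiently large power of two, by giving each edge multiplicity \(\lfloor \alpha/d \rfloor\) and padding with extra copies of arbitrary edges, and then to Euler-split \(\log_{2}\alpha\) times, at each step retaining the half carrying at least half the weight of original edges; a counting argument shows the final \(1\)-regular graph consists entirely of original edges, and the work is linear because the multigraph is stored via multiplicities rather than explicit parallel edges. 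If you intend to use the theorem as an imported result, as the paper does, a citation suffices; if you intend to prove it, the general-\(d\) case must actually be carried out.
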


We make the following definition which will simplify some of the terminology
in the proofs that follow.

\begin{definition}
	Let \(\ListAsgn{L}\) be a list assignment on a graph \(G = (V, E)\).
	If, in a partial \(\ListAsgn{L}\)-coloring of \(G\),
	a vertex \(v \in V\) is colored with \(c \in \List{L}{v}\),
	then the color \(c\) is no longer available for use on the
	uncolored neighbors of \(v\). So, the color \(c\) is removed from
	the lists of the neighbors of \(v\), and we do this for each
	vertex colored in this partial \(\ListAsgn{L}\)-coloring of \(G\).
	The new lists on \(G\) are also called the \defining{residual}
	lists on \(G\), and we shall say that the list on an uncolored
	vertex \(u\) \defining{reduces by \(k\)} if the residual
	list on \(u\) is a \((\card{\List{L}{u}} - k)\)-list.
\end{definition}

The following well-known lemma, due to Bondy--Bopanna--Siegel~\cite[Remark~2.4]{AlonTarsi1992}, gives an
algorithmic proof of the theorem of Alon and Tarsi~\cite{AlonTarsi1992} in the case when \(G\) is given an orientation containing
no odd directed cycle. For the sake of completeness, we provide a proof of this lemma along
the lines in \cite{West2002}.

\begin{definition}
	Let \(G\) be a digraph, and \(v \to w\) be an edge of \(G\).
	We also call \(w\) the \defining{successor} of \(v\), and \(v\) the
	\defining{predecessor} of \(w\).
	A \defining{kernel} of \(G\) is an independent set \(S\) such that
	every \(v \not\in S\) has a successor in \(S\).
\end{definition}

\begin{lemma}[{Bondy--Bopanna--Siegel~\cite[Remark~2.4]{AlonTarsi1992}, 1992}]\label{L:BBS}
	Let \(G = (V, E)\) be a simple graph that is given an orientation containing no odd directed cycle.
	Suppose \(\ListAsgn{L}\) is a list assignment on \(G\) such that
	\(\List{L}{v}\) is an \((\outdegree(v) + 1)\)-list for all \(v \in V\).
	Then, \(G\) is \(\ListAsgn{L}\)-choosable.
\end{lemma}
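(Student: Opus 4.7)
The plan is to prove the lemma by induction on $|V|$, using the classical kernel method (Richardson's theorem: every finite digraph with no odd directed cycle admits a kernel). The base case $|V| = 0$ is vacuous.

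For the inductive step, pick any color $c$ appearing in at least one list (if no list is nonempty, there is nothing to do; actually the hypothesis forces every list to be nonempty provided $V \neq \emptyset$). Let $H_c$ be the subdigraph of $G$ induced by the vertex set $W \defn \Set{ v \in V : c \in \List{L}{v}}$, with the orientation inherited from $G$. Since $H_c$ is a subdigraph of $G$, it also contains no odd directed cycle, so by Richardson's theorem $H_c$ has a kernel $S \subseteq W$. I would assign the color $c$ to every vertex of $S$; this is a proper partial coloring because $S$ is independent in $H_c$ (hence in $G$). Then delete the vertices of $S$ from $G$ and remove the color $c$ from every remaining list to obtain the digraph $G' \defn G - S$ with residual list assignment $\ListAsgn{L}'$.

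The crux is to verify that $\ListAsgn{L}'$ still satisfies the outdegree-plus-one hypothesis on $G'$, so the induction can proceed. For a vertex $v \in V(G') \setminus W$, one has $c \notin \List{L}{v}$, so $\List{L}{v}$ is unchanged; its outdegree in $G'$ can only decrease by the deletion of $S$, so the inequality $\card{\List{L}{v}} \geq \outdegree_{G'}(v) + 1$ persists. For a vertex $v \in W \setminus S$, the list shrinks by exactly $1$ (the color $c$ is removed). But by the kernel property, $v$ has some successor $w \in S$ in $H_c$, and $\diredge{v}{w}$ is also an edge of $G$ pointing out of $v$; since $w$ is deleted in $G'$, the outdegree of $v$ drops by at least one. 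Hence $\card{\List{L}{v}} - 1 \geq (\outdegree_G(v) - 1) + 1 = \outdegree_{G'}(v) + 1$, as required. The induction hypothesis applied to $G'$ produces an $\ListAsgn{L}'$-coloring of $G'$, and combined with the color $c$ on $S$, this yields the desired $\ListAsgn{L}$-coloring of $G$.

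The main obstacle, and really the only nontrivial ingredient, is Richardson's theorem guaranteeing the kernel $S$. Once this is in hand, everything else is bookkeeping. I would cite Richardson's theorem directly, since reproving it from scratch via the Gallai--Roy type argument on the condensation of strongly connected components would be a significant digression from the paper's theme. For an explicit algorithm, one observes that the kernel of a digraph with no odd directed cycle can be constructed greedily by processing strongly connected components in reverse topological order (each such component, being bipartite since it has no odd directed cycle, admits a kernel equal to one of its two color classes), which is the form most convenient for the algorithmic applications elsewhere in the paper.
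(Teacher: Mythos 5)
Your proof is correct and follows essentially the same route as the paper's: induction on \(\card{V}\), restricting to the vertices whose lists contain a chosen color \(c\), invoking Richardson's theorem to obtain a kernel \(S\), coloring \(S\) with \(c\), and observing that every uncolored vertex losing \(c\) from its list also loses a successor in \(S\), so the outdegree-plus-one hypothesis is preserved. Your write-up is in fact slightly more explicit about this last bookkeeping step than the paper's, but the argument is the same.
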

\begin{proof}
	If \(\card{V} = 1\), then the statement is trivial, so suppose that \(\card{V} = n > 1\),
	and that the lemma is true for all graphs with fewer than \(n\) vertices.
	Let \(c\) be a color that occurs in some list assigned by \(\ListAsgn{L}\).
	Consider the induced subgraph \(H\) on the set \(U \defn \Set{ v \in V : c \in \List{L}{v} }\).
	Clearly, the induced orientation on \(H\) also does not have any odd directed cycle.
	Now, Richardson's theorem~\cite{Richardson1953b} says that any digraph without odd directed cycles has a kernel,
	so let \(S\) be a kernel in \(H\). Assign the color \(c\) to every vertex in \(S\),
	and now consider \(G' \defn G - S\). Notice that the residual lists
	have reduced in size by \(1\) for every list on \(U - S\),
	but every vertex in \(U - S\) also has a successor in \(S\). Thus, \(G'\)
	satisfies the induction hypothesis, and we are done.
\end{proof}

One can find in the literature~\cite{Neumann-Lara1971,SzwarcfiterChaty1994} proofs of Richardson's theorem that output
a kernel in polynomial time. However,
the graphs that we consider (cf.~Lemmas~\ref{L:cycle-choosability} and~\ref{L:fix})
have enough structure that they permit straightforward
linear time algorithms for finding a kernel.

For integers \(r, s \geq 3\), define the \defining{cylindrical triangulation}
\(C(r, s)\) to be the graph obtained from \(T(r + 1, s, 0)\) by deleting
the column \(C_{r + 1}\). More formally, let
\(V\bigl(C(r, s)\bigr) \defn \Set{ (i, j) : 1 \leq i \leq r, 1 \leq j \leq s }\)
and let \(E\bigl(C(r, s)\bigr)\) contain the following edges:
\begin{itemize}
	\item For \(1 < i < r\), let \((i, j)\) be adjacent to
	\((i, j \pm 1)\), \((i \pm 1, j)\) and
	\((i \pm 1, j \mp 1)\).
	
	\item Let \((1, j)\) be adjacent to \((1, j \pm 1)\), \((2, j)\) and
	\((2, j - 1)\).
	
	\item Let \((r, j)\) be adjacent to \((r, j \pm 1)\),
	\((r - 1, j + 1)\) and \((r - 1, j)\).
\end{itemize}
Again, addition in the second coordinate is taken modulo \(s\).
Note that every \defining{interior vertex} of \(C(r, s)\), that is,
any vertex \((i, j)\) with \(1 < j < r\), has degree \(6\) and
every \defining{exterior vertex} of \(C(r, s)\), that is, any vertex \((i, j)\)
with \(j = 1\) or \(j = r\), has degree \(4\).

By an abuse of notation, we shall use \(C_{i}\) to denote the induced subgraph
on the \(i\)th column of \(C(r, s)\), too. Note that if we delete
any column of the graph \(T(r + 1, s, t)\) for any \(0 \leq t \leq s - 1\),
we still get a graph isomorphic to \(C(r, s)\).

We will often need to color paths and cycles in \(T(r, s, t)\) and \(C(r, s)\),
so we compile well-known results (see~\cite{ErdosRubinEtAl1980}, for instance)
on the colorability of these graphs in the following lemma.
Moreover, from Lemma~\ref{L:BBS} and the above comments, we can give linear time algorithms
for \(\ListAsgn{L}\)-coloring these graphs.

\begin{lemma}\label{L:cycle-choosability}
	\hfill
	\begin{enumerate}
		\item An even cycle is \(2\)-list chromatic.
		
		\item An odd cycle is not \(2\)-colorable, and hence not \(2\)-choosable.
		However, if \(\ListAsgn{L}\) is a list assignment of \(2\)-lists
		on an odd cycle such that not all the lists are identical, then the
		cycle is \(\ListAsgn{L}\)-choosable.
		
		\item If \(\ListAsgn{L}\) is a list assignment
		on an odd cycle having one \(1\)-list, one \(3\)-list,
		and all the rest as \(2\)-lists,
		then the cycle is \(\ListAsgn{L}\)-choosable.
		
		\item If \(\ListAsgn{L}\) is a list assignment on a path graph
		having one \(1\)-list, and all the rest as \(2\)-lists,
		then the path is \(\ListAsgn{L}\)-choosable.
	\end{enumerate}
	Moreover, the \(\ListAsgn{L}\)-colorings can all be found in linear time.
\end{lemma}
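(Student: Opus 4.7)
The plan is to derive all four statements as uniform applications of Lemma~\ref{L:BBS}, each with a tailor-made orientation that forces the list-size hypothesis to hold with equality, and then to argue the linear-time bound by exploiting the particularly simple structure of paths and cycles.

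For part~(1), I would orient the edges of the even cycle cyclically in one direction: every vertex then has $\outdegree(v) = 1$, and the only directed cycle is the whole (even) cycle, so there are no odd directed cycles. For part~(4), I would orient every edge of the path toward the vertex $u$ carrying the $1$-list (orienting both halves toward $u$ if $u$ is interior); this is a DAG with $\outdegree(u) = 0$ and $\outdegree(v) = 1$ at every other vertex. For part~(3), I would orient the odd cycle so that both of its arcs run from the vertex $w$ carrying the $3$-list toward the vertex $u$ carrying the $1$-list; the resulting DAG has $\outdegree(w) = 2$, $\outdegree(u) = 0$, and $\outdegree(v) = 1$ at every other vertex. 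In each case the list sizes match $\outdegree(v) + 1$ pointwise, so Lemma~\ref{L:BBS} supplies $\ListAsgn{L}$-choosability.

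For part~(2), the non-$2$-colorability of an odd cycle is standard and immediately gives non-$2$-choosability, as witnessed by the constant assignment $\List{L}{v} = \{1,2\}$. When the $2$-lists are not all identical, I would pick adjacent $u, v$ with $\List{L}{u} \neq \List{L}{v}$, choose $c \in \List{L}{u} \setminus \List{L}{v}$, and color $u$ with $c$. The residual assignment on the remaining path leaves $v$ with its original $2$-list and the opposite endpoint with a list of size at least $1$, so part~(4) finishes the job.

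The main delicacy is the linear-time bound. In each of the orientations above, any induced sub-digraph is a disjoint union of directed arcs, so kernels can be extracted by taking alternating sinks in a single greedy pass; hence the inductive procedure inside Lemma~\ref{L:BBS} admits a linear implementation. Concretely, I expect to write down explicit sweeps: for~(4) color the $1$-list vertex and then sweep outward, at each step choosing any color in the current $2$-list distinct from that of the previously coloured neighbour; for~(3) color $u$ and then sweep along both arcs toward $w$, whose $3$-list absorbs the two accumulated constraints; and for~(1) either locate an adjacent pair with distinct lists and reduce to part~(4), or, if all lists equal a fixed $2$-set, $2$-color the even cycle outright. Each procedure visits every vertex a bounded number of times, giving the $\Bigoh{n}$ bound.
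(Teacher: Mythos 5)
Your proposal is correct and follows exactly the route the paper intends: the paper gives no explicit proof of this lemma, citing it as well known and remarking that the linear-time colorings follow from Lemma~\ref{L:BBS} together with the simple structure of paths and cycles, which is precisely what your tailor-made orientations and greedy sweeps supply. The only cosmetic point is that in parts~(1) and~(2) the reduced path may have all residual lists of size two rather than exactly one $1$-list, but shrinking one list (or sweeping greedily from an endpoint) handles this immediately.
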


The following lemma, due to S.~Sinha (during an undergraduate research internship
with the first author), is in a similar spirit to Thomassen's list coloring
of a near-triangulation of the plane~\cite{Thomassen1994b},
and it will be repeatedly invoked in the proof of case~\ref{l1} in Theorem~\ref{T:Main}.

\begin{lemma}[{Sinha [personal communication], 2014}]\label{L:N-algorithm}
	For \(r \geq 3\), \(s \geq 3\), let \(G = C(r, s)\) be a cylindrical triangulation.
	Suppose that \(\ListAsgn{L}\) is a list assignment on \(G\) such that:
	\begin{enumerate}
		\item there exists \(1 \leq j \leq s\) such that the exterior vertices \((1, j)\)
		and \((1, j - 1)\) have lists of size equal to \(4\);
		\item every other exterior vertex has a list of size equal to \(3\);
		\item every interior vertex has a list of size equal to \(5\).
	\end{enumerate}
	Then, \(G\) is \(\ListAsgn{L}\)-choosable. Moreover, an \(\ListAsgn{L}\)-coloring
	can be found in linear time.
\end{lemma}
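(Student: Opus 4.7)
The plan is to proceed by induction on $r$, peeling off the far boundary column $C_r$ at each step. For the inductive step with $r \geq 4$, observe that $C_r$ is an $s$-cycle whose vertices all carry $3$-lists by hypothesis; Lemma~\ref{L:cycle-choosability} therefore produces an $\ListAsgn{L}$-coloring of $C_r$ in $O(s)$ time. Each vertex of $C_{r-1}$ has exactly two neighbors in $C_r$, so its $5$-list shrinks to a residual list of size at least $3$. I would restrict each such residual list arbitrarily to a $3$-element sublist in order to obtain a new assignment $\ListAsgn{L}'$ on the smaller cylindrical triangulation $C(r-1, s)$. This $\ListAsgn{L}'$ satisfies the hypothesis of the lemma: the two adjacent $4$-lists on $C_1$ are untouched, so are the $5$-lists on the interior columns $C_2, \dotsc, C_{r-2}$, and the new far boundary $C_{r-1}$ now carries $3$-lists. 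Applying the inductive hypothesis then produces an $\ListAsgn{L}'$-coloring, which combined with the $\ListAsgn{L}$-coloring of $C_r$ yields the desired $\ListAsgn{L}$-coloring of $C(r, s)$.

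The base case is $r = 3$, which lies outside the scope of the reduction above since the residual graph after peeling $C_3$ would be $C(2, s)$, not covered by the lemma. To handle $C(3, s)$ I would proceed directly: first $\ListAsgn{L}$-color $C_3$ from its $3$-lists, and separately $\ListAsgn{L}$-color $C_1$ (a cycle with two adjacent $4$-lists and $3$-lists elsewhere, hence $\ListAsgn{L}$-choosable by Lemma~\ref{L:cycle-choosability}), coordinating the two colorings so that the residual lists on the middle column $C_2$ have size at least $2$ everywhere and are not all identical around the cycle. Once this is arranged, a final application of Lemma~\ref{L:cycle-choosability} finishes off the coloring of $C_2$. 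The flexibility in the two $4$-lists on $C_1$ supplies exactly the degree of freedom required for the coordination: each vertex $(2, k)$ sees two colored neighbors in $C_1$ and two in $C_3$, and the $4$-lists make it possible to choose at least one color on the $C_1$-side so that not all four colors landing in $(2, k)$'s neighborhood lie inside $L_{(2, k)}$.

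The main obstacle is precisely this base case: showing that the coordinated coloring of $C_1$ and $C_3$ can always be arranged requires either a careful constructive argument or a short case analysis over the possible interactions between the lists on $C_1$, $C_3$, and the $5$-lists on $C_2$. An alternative route that bypasses much of the case analysis is to precolor one or two carefully chosen vertices and then construct an orientation of the remaining subgraph of $C(3, s)$ with no odd directed cycle and with out-degrees matching one less than the residual list sizes, after which Lemma~\ref{L:BBS} provides the coloring; an edge count shows that the out-degree requirement exceeds the edge count by only a small amount, so a single additional precoloring balances the totals, with the two $4$-lists again providing the needed slack.

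For the linear-time bound, each of the $r - 2$ peel-offs takes $O(s)$ time to $\ListAsgn{L}$-color an $s$-cycle and restrict the lists on the next column, and the base case on $C(3, s)$ runs in $O(s)$ time. The overall running time is therefore $O(rs) = O(\card{V})$, which is linear in the size of $C(r, s)$ as claimed.
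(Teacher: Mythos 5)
Your reduction from general $r$ to $r = 3$ by peeling $C_r, C_{r-1}, \dotsc$ from the right is exactly the paper's first step and is fine. But the entire content of the lemma is the base case $C(3, s)$, and there your proposal has a genuine gap: you state the goal (color $C_1$ and $C_3$ in a coordinated way so that every vertex of $C_2$ keeps a residual list of size at least $2$, not all identical when $s$ is odd) and then explicitly defer the argument that this coordination is always possible. That is not a routine verification. Each $(2, j)$ sees two colored vertices of $C_1$ and two of $C_3$, and with $3$-lists on both boundary cycles these four colors can easily be four distinct members of the $5$-list $\List{L}{(2,j)}$; preventing this simultaneously for all $j$ with only the two $4$-lists as slack is a global constraint-satisfaction problem, and you give no mechanism for solving it. Your fallback via Lemma~\ref{L:BBS} does not rescue this: you would need an orientation of (most of) $C(3,s)$ with no odd directed cycle, which in a triangulation forces every triangle to be transitively oriented, and your edge count is off --- the out-degree budget $\sum_v(\card{\List{L}{v}}-1) = 8s+2$ versus $\card{E} = 7s$ differs by $s+2$, not by ``a small amount'' fixable with one precoloring, and in any case the counting identity is neither the obstruction nor a sufficient condition here.

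The paper avoids the coordination problem entirely by \emph{not} coloring $C_1$ and $C_3$ before touching $C_2$. With the $4$-lists placed on $(1,s)$ and $(1,s-1)$, it first colors the single middle-column vertex $(2,s)$ with a color in $\List{L}{(2,s)} \setminus \List{L}{(1,s)}$, then colors $C_3$ (now a cycle with two adjacent $2$-lists and $3$-lists elsewhere), and finally sweeps greedily through $C_1 \cup C_2$ in a zigzag order $(1,1), (2,1), (1,2), (2,2), \dotsc$ ending at the triangle $(2,s-1), (1,s-1), (1,s)$, whose residual lists have sizes at least $1, 2, 3$ and which is therefore colorable by Lemma~\ref{L:cycle-choosability}. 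The two $4$-lists are used as the terminus of the greedy sweep, not as slack for a two-cycle coordination. If you want to complete your own proof, you either need to supply the missing coordination argument for $C_1$ and $C_3$ (which I do not believe is straightforward, and may require exactly the kind of case analysis you are trying to avoid), or adopt an ordering of this interleaved kind.
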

\begin{proof}
	By Lemma~\ref{L:cycle-choosability}, there
	is a proper coloring of \(C_{r}\) since it is
	assigned \(3\)-lists under \(\ListAsgn{L}\).
	Since every vertex of \(C_{r - 1}\) is adjacent to exactly
	two vertices of \(C_{r}\), a proper coloring
	of \(C_{r}\) reduces the \(5\)-lists on \(C_{r - 1}\) to \(3\)-lists.
	
	Thus, by inductively coloring the columns of \(C(r, s)\) from the right, we may assume
	without loss of generality that \(r = 3\). We also assume without loss
	of generality that the lists of size equal to \(4\) are on the
	vertices \((1, s - 1)\) and \((1, s)\) in the column \(C_{1}\).
	Now, color \((2, s)\) with \(c \in \List{L}{(2, s)} \setminus \List{L}{(1, s)}\),
	which exists since \(C_{2}\) has \(5\)-lists.
	This reduces the sizes of the lists on each of the neighbors of \((2, s)\) by \(1\), except
	for \(\List{L}{(1, s)}\), which still has size equal to \(4\).
	Now, \(C_{3}\) has \(3\)-lists on every vertex, except for \((3, s)\) and \((3, s - 1)\),
	which have \(2\)-lists. So, properly color \(C_{3}\) using Lemma~\ref{L:cycle-choosability}.
	Then, color the remaining vertices in a zigzag fashion from the bottom row, coloring \((1, s)\)
	last, in the following order:
	\((1, 1)\), \((2, 1)\), \((1, 2)\), \((2, 2)\), \ldots, \((1, j)\), \((2, j)\), \ldots,
	\((1, s - 2)\), \((2, s - 2)\), \((2, s - 1)\), \((1, s - 1)\), \((1, s)\).
	
	A proper coloring can always be found by coloring the vertices in the above sequence for
	the following reason.
	After coloring \(C_{3}\), the list sizes
	on the remaining vertices are as follows:
	\((1, s)\) and \((1, s - 1)\) have \(4\)-lists,
	\((1, 1)\), \((2, 1)\) and \((2, s - 1)\) have \(2\)-lists, and
	all other vertices have \(3\)-lists.
	So, color the vertex \((1, 1)\) using a color from its list, and the list sizes
	then are as follows:
	\((1, s - 1)\) has a \(4\)-list,
	\((1, 2)\) and \((2, s - 1)\) have \(2\)-lists,
	\((2, 1)\) has a \(1\)-list,
	and all other vertices have \(3\)-lists.
	Next, color \((2, 1)\) using a color from its list, and observe
	that the next vertex that is to be colored in the sequence always has at least
	one color left in its list. The last three vertices left to be colored are in
	a \(3\)-cycle, with lists of sizes at least \(1\), \(2\) and \(3\). This cycle is properly colorable
	by Lemma~\ref{L:cycle-choosability}, so this completes the proof.
	
	It is clear from the proof that this algorithm produces an \(\ListAsgn{L}\)-coloring
	in linear time. Figure~\ref{F:C-algorithm} illustrates the sizes of the lists at each step of the above
	coloring sequence for the graph \(G = C(3,5)\).
\end{proof}

\begin{figure}
\centering
	\begin{subfigure}{0.2\textwidth}
	\centering
		\begin{tikzpicture}[font=\scriptsize]
			\draw (1,1) grid (3,5);
			\foreach \y in {2,...,5}
			\foreach \x in {1,2}
			\draw (\x,\y) -- (\x+1,\y-1);
			\foreach \x in {1,2,3}
			\foreach \y in {1,...,5}
			\draw[fill=black] (\x,\y) circle(2.5pt);
			\foreach \y in {1,...,3}{
				\node[anchor=east] at (1,\y) {3};
			}
			\foreach \y in {1,...,5}{
				\node[anchor=west] at (3,\y) {3};
			}
			\foreach \y in {4,5}{
				\node[anchor=east] at (1,\y) {4};
			}
			\node[anchor=north] at (2,1) {5};
			\node[anchor=south] at (2,5) {5};
			\foreach \y in {2,...,4}
			\node[anchor=south west] at (2,\y) {5};
		\end{tikzpicture}
		\caption{}
	\end{subfigure}
	\begin{subfigure}{0.2\textwidth}
	\centering
		\begin{tikzpicture}[font=\scriptsize]
			\draw (1,1) grid (3,4);
			\foreach \y in {2,...,4}
			\foreach \x in {1,2}
			\draw (\x,\y) -- (\x+1,\y-1);
			\draw
			(1,4) -- (1,5) -- (2,4)
			(3,4) -- (3,5);
			\draw[dotted]
			(1,5) -- (2,5) -- (2,4)
			(3,4) -- (2,5) -- (3,5);
			\foreach \x in {1,2,3}
			\foreach \y in {1,...,5}
			\draw[fill=black] (\x,\y) circle(2.5pt);
			\foreach \y in {2,3}{
				\node[anchor=east] at (1,\y) {3};
			}
			\foreach \y in {1,...,3}{
				\node[anchor=west] at (3,\y) {3};
			}
			\foreach \y in {4,5}{
				\node[anchor=east] at (1,\y) {4};
				\node[anchor=west] at (3,\y) {2};
			}
			\node[anchor=north] at (2,1) {4};
			\node[anchor=south] at (2,5) {\(c\)};
			\foreach \y in {2,3}
			\node[anchor=south west] at (2,\y) {5};
			\node[anchor=south west] at (2,4) {4};
			\node[anchor=east] at (1,1) {2};
		\end{tikzpicture}
		\caption{}
	\end{subfigure}
	\begin{subfigure}{0.2\textwidth}
	\centering
		\begin{tikzpicture}[font=\scriptsize]
			\draw (1,1) grid (2,4);
			\foreach \y in {2,...,4}
			\draw (1,\y) -- (2,\y-1);
			\draw
			(1,4) -- (1,5) -- (2,4);
			\draw[dotted]
			(1,5) -- (2,5) -- (2,4);
			\foreach \x in {1,2,3}
			\foreach \y in {1,...,5}
			\draw[fill=black] (\x,\y) circle(2.5pt);
			\foreach \y in {2,3}{
				\node[anchor=east] at (1,\y) {3};
				\node[anchor=south west] at (2,\y) {3};
			}
			\foreach \y in {4,5}{
				\node[anchor=east] at (1,\y) {4};
			}
			\node[anchor=north] at (2,1) {2};
			\node[anchor=south west] at (2,4) {2};
			\node[anchor=east] at (1,1) {2};
			\foreach \y in {2,...,5}{
				\draw[dotted]
				(2,\y) -- (3,\y)
				(2,\y) -- (3,\y-1)
				(3,\y) -- (3,\y-1);		
			}
			\draw[dotted] (2,1) -- (3,1);
		\end{tikzpicture}
		\caption{}
	\end{subfigure}
	\begin{subfigure}{0.12\textwidth}
		\begin{tikzpicture}[font=\scriptsize]
			\draw (1,2) grid (2,4);
			\foreach \y in {2,...,5}
			\draw (1,\y) -- (2,\y-1);
			\draw
			(1,4) -- (1,5)
			(2,1) -- (2,2);
			\draw[dotted]
			(1,5) -- (2,5) -- (2,4)
			(2,1) -- (1,1) -- (1,2);
			\foreach \x in {1,2}
			\foreach \y in {1,...,5}
			\draw[fill=black] (\x,\y) circle(2.5pt);
			\node[anchor=east] at (1,2) {2};
			\node[anchor=east] at (1,3) {3};
			\node[anchor=east] at (1,4) {4};
			\node[anchor=east] at (1,5) {3};
			\node[anchor=north] at (2,1) {1};
			\node[anchor=west] at (2,2) {3};
			\node[anchor=west] at (2,3) {3};
			\node[anchor=west] at (2,4) {2};
		\end{tikzpicture}
		\caption{}
	\end{subfigure}
	\begin{subfigure}{0.12\textwidth}
	\centering
		\begin{tikzpicture}[font=\scriptsize]
			\draw (1,2) grid (2,4);
			\foreach \y in {3,...,5}
			\draw (1,\y) -- (2,\y-1);
			\draw
			(1,4) -- (1,5);
			\draw[dotted]
			(1,5) -- (2,5) -- (2,4)
			(2,1) -- (1,1) -- (1,2)
			(1,2) -- (2,1) -- (2,2);
			\foreach \x in {1,2}
			\foreach \y in {1,...,5}
			\draw[fill=black] (\x,\y) circle(2.5pt);
			\node[anchor=east] at (1,3) {3};
			\node[anchor=west] at (2,3) {3};
			\node[anchor=east] at (1,4) {4};
			\node[anchor=east] at (1,5) {3};
			\node[anchor=west] at (2,2) {2};
			\node[anchor=east] at (1,2) {1};
			\node[anchor=west] at (2,4) {2};
			\node[anchor=north] at (1,1) {\vphantom{1}};
		\end{tikzpicture}
		\caption{}
	\end{subfigure}
	\begin{subfigure}{0.12\textwidth}
		\begin{tikzpicture}[font=\scriptsize]
			\draw
			(1,5) -- (1,4) -- (2,4) -- (1,5);		
			\draw[dotted]
			(1,1) grid (2,3)
			(1,5) -- (2,5) -- (2,4)
			(1,3) -- (1,4) -- (2,3) -- (2,4)
			(1,2) -- (2,1)
			(1,3) -- (2,2);
			\foreach \x in {1,2}
			\foreach \y in {1,...,5}
			\draw[fill=black] (\x,\y) circle(2.5pt);
			\node[anchor=east] at (1,5) {3};
			\node[anchor=east] at (1,4) {2};
			\node[anchor=west] at (2,4) {1};
			\node[anchor=north] at (1,1) {\vphantom{1}};
		\end{tikzpicture}
		\caption{}
	\end{subfigure}
	\caption{Illustration of the sizes of the lists on the vertices at each step for \(G = C(3,5)\).}\label{F:C-algorithm}
\end{figure}

\section{Outline of the proof of \texorpdfstring{Theorem~\ref{T:Main}}{Theorem 1}}\label{S:Overview}

Suppose that \(T(r, s, t)\) has a \(5\)-list assignment \(\ListAsgn{L}\) in which not all the lists are identical.
For case~\ref{l1}, assume that for \(r \geq 4\) and \(s \geq 3\).
We will use Lemma~\ref{L:N-algorithm} to reduce the number of possibilities for the lists assigned by \(\ListAsgn{L}\), arriving at four criteria that \(\ListAsgn{L}\) must satisfy.

First, we show that if \(v\) is a vertex whose list \(\List{L}{v}\) is not contained in the union of its lists on its two neighbors on an adjacent column, then we can choose a color for \(v\) that is not in the union of the lists of those two neighboring vertices; then we use Lemma~\ref{L:cycle-choosability} to color the entire column containing \(v\), and notice that Lemma~\ref{L:N-algorithm} is now applicable.
Hence, we arrive at our first reduction, called criterion~\ref{criterion2.1}.

Next, we focus on a pair of adjacent vertices on the same column that have distinct lists.
Applying Lemmas~\ref{L:cycle-choosability} and \ref{L:N-algorithm} as before to this pair and their neighbors on an adjacent column, we arrive at criterion~\ref{criterion2.2}, illustrated in Figure~\ref{F:config1}.

Next, we focus on a pair of adjacent vertices \(u\) and \(v\) on adjacent columns that have the same lists.
Using criterion~\ref{criterion2.2} on these vertices, we deduce that there is a vertex \(w\) adjacent to both \(v\) and \(w\) and having the same list; we call this criterion~\ref{criterion2.3}.

Lastly, we focus on a face \(uvw\) in which the vertices \(v\) and \(w\) lie on the same column and have the same lists.
Using Lemmas~\ref{L:cycle-choosability} and \ref{L:N-algorithm}, we deduce that at least one of the two neighbors of \(u\) on the same column of \(u\) have a list identical to \(\List{L}{u}\); we call this criterion~\ref{criterion2.4}.

What remains is to exploit the structure of \(6\)-regular triangulations given by Theorem~\ref{T:Altshuler} with the rigidity imposed on the list assignment \(\ListAsgn{L}\) by criteria~\ref{criterion2.1} through \ref{criterion2.4}.
Lemma~\ref{L:iso-noniso-config} shows that either a vertex has a list that is different from the lists on any of its neighbors, or the list is shared by a neighbor in the same column.
This essentially gives a complete description of the list assignment \(\ListAsgn{L}\) from only the information of lists assigned on every four or five consecutive vertices in any one column:
the lists propagate across columns in any of precisely ten ways, as shown in Figures~\ref{F:noniso-config1}--\ref{F:iso-config}. This completes the preparation for the proof of case~\ref{l1}.

Ideally, one would like to complete the proof with another application of Lemma~\ref{L:N-algorithm}.
However, an induction argument as in the proof of the lemma does not directly work here, since a naive coloring of the column \(C_1\) need not give a cylindrical triangulation containing two adjacent vertices on \(C_2\) that have lists of size \(4\).
Applying a little more discretion in our choices, we use the small set of allowed configurations for \(\ListAsgn{L}\) to arrive at a two-step coloring scheme (assume \(r= 4\) without loss of generality):
\begin{enumerate}
\item Properly color \(C_1\) and a set \(J\) of alternate vertices in \(C_3\) such that (after reducing the lists) \(C_2\) has one \(4\)-list and the remaining as \(3\)-lists.
\item Properly color \(C_4\), then the remaining vertices in \(C_3\), and finally \(C_2\).
\end{enumerate}
Assuming step 1 is successfully achieved, we complete step 2 as illustrated in Figures~\ref{F:example1} and \ref{F:example2}.

Step 1 crucially uses the reduction into the ten cases illustrated in Figures~\ref{F:noniso-config1}--\ref{F:iso-config}.
Indeed, for each of the ten configurations that could appear on the column \(C_1\), we describe an explicit procedure for coloring \(C_1\), as well as for picking out the set \(J\) and a coloring for it, so that step 1 is completed.
This is a three stage process, depending on the configuration on \(C_1\) and the set \(J\).
This completes the proof of case~\ref{l1}.

Notice that Lemma~\ref{L:N-algorithm} is not applicable on \(C(r,s)\) for \(r \leq 2\), so cases~\ref{l2} and \ref{l3} of Theorem~\ref{T:Main} require a different line of attack.
So, we shall instead use the narrow length of the \(r \times s\) grid to place restrictions on the list assignment \(\ListAsgn{L}\).
The analysis is therefore shorter in these cases compared to case~\ref{l1} as discussed above.

For case~\ref{l4}, as mentioned earlier, the \(5\)-choosability of the \(3\)-chromatic \(6\)-regular toroidal triangulations was settled in a previous work~\cite{BalachandranSankarnarayanan2021}, but a small modification is required to get a linear time algorithm, for which we apply Lemma~\ref{L:BBS} instead of the theorem of Alon and Tarsi~\cite{AlonTarsi1992}.

Lastly, to show that the \(3\)-chromatic graphs \(T(r,s,t)\) are not \(3\)-choosable, we assign specific \(3\)-lists column-wise to \(T(r,s,t)\) such that distinct lists on adjacent columns share exactly two colors.
A crucial observation is that there is essentially a unique \(3\)-coloring on the subset of columns that are assigned the same list, and this lets us deduce that \(T(r,s,t)\) is not \(\ListAsgn{L}\)-colorable for this choice of \(3\)-list assignment.
A similar argument works when there are too few columns by instead assigning the lists row-wise.
A handful of exceptional cases are dealt with in \ref{S:Appendix} in an ad hoc manner.

\section{Preparation for the proof of \texorpdfstring{case~\ref{l1}}{case (1)} in \texorpdfstring{Theorem~\ref{T:Main}}{Theorem 1}}\label{S:Preparation}

For \(r \geq 4\), \(s \geq 3\) and \(0 \leq t \leq s - 1\), let
\(G \defn T(r, s, t)\). Fix \(\ListAsgn{L}\) to be a list assignment
on \(G\) of lists of size equal to \(5\).
We start be eliminating the trivial case: if all the lists of \(\ListAsgn{L}\)
are identical, then \(G\) is \(\ListAsgn{L}\)-choosable because
\(G\) is \(5\)-colorable~\cite{Thomassen1994a}. Moreover, a \(5\)-coloring
can be found in linear time: see~\cite{CollinsHutchinson1999,Sankarnarayanan2022,YehZhu2003}.

For a vertex \((i, j) \in C_{i}\), let its \defining{left neighbors} be the two adjacent vertices
in \(C_{i - 1}\), its \defining{right neighbors} be the two adjacent vertices in
\(C_{i + 1}\), and its \defining{vertical neighbors} be the two adjacent vertices
in \(C_{i}\). We shall repeatedly invoke Lemma~\ref{L:N-algorithm} to cut down on the possible
choices for the lists assigned by \(\ListAsgn{L}\), until it becomes simple
enough to directly specify a proper coloring.

\begin{lemma}\label{L:union1}
	Suppose that not all the lists in \(\ListAsgn{L}\) are identical.
	If there is a vertex \(v \in V(G)\)
	such that its list is not contained in the union of the lists of its
	two left neighbors, then \(G\) is \(\ListAsgn{L}\)-choosable in linear time.
\end{lemma}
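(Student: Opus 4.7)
The plan is to use the color guaranteed by the hypothesis to color $v$, extend properly along the column cycle $C_i$ containing $v$, and then invoke Lemma~\ref{L:N-algorithm} on the cylindrical triangulation $G - C_i \cong C(r-1, s)$. Since $r \geq 4$ we have $r - 1 \geq 3$, which meets the dimensional hypothesis of that lemma.

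Let $v = (i, j)$ and let $u_1, u_2$ denote the two left neighbors of $v$ in the column $C_{i-1}$; note that by Definition~\ref{D:triangulation} these are themselves vertically adjacent in $C_{i-1}$. First, I would pick some $c \in \List{L}{v} \setminus (\List{L}{u_1} \cup \List{L}{u_2})$---whose existence is exactly the hypothesis---and assign $v \mapsto c$. The point of this choice is that $\List{L}{u_1}$ and $\List{L}{u_2}$ remain untouched (still of size $5$), while each of the other four neighbors of $v$ loses at most one color. Next I would extend to a proper coloring of all of $C_i$: the uncolored part is a path whose vertices all still carry lists of size $\geq 4$, so a straightforward greedy coloring succeeds in $\Bigoh{s}$ time.

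The heart of the argument is the list bookkeeping on $G' \defn G - C_i$, whose exterior columns are $C_{i-1}$ and $C_{i+1}$. Every exterior vertex $w$ of $G'$ has exactly two neighbors in $C_i$, and those two neighbors are adjacent in $C_i$, so in any proper coloring of $C_i$ they receive distinct colors; hence $\List{L}{w}$ reduces by at most $2$. For $u_1$ and $u_2$ specifically, the reduction is strictly smaller, because one of the two relevant neighbors is $v$ itself, whose color $c$ was chosen to lie outside their lists. Interior vertices of $G'$ retain their full $5$-lists, since they have no neighbor in $C_i$. After trimming residual lists to sizes $4$ on $u_1, u_2$, $3$ on every other exterior vertex, and $5$ on every interior vertex, the hypothesis of Lemma~\ref{L:N-algorithm} is met---$u_1, u_2$ being a consecutive pair of exterior vertices in $C_{i-1}$---and invoking that lemma finishes the coloring of $G'$ in linear time.

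The main obstacle I anticipate is really just this bookkeeping. One must verify from Definition~\ref{D:triangulation} that the two left neighbors of a vertex are themselves adjacent in the preceding column (supplying the consecutive $4$-listed pair demanded by Lemma~\ref{L:N-algorithm}) and that the two $C_i$-neighbors of any exterior vertex of $G'$ are adjacent in $C_i$ (forcing distinct colors in any proper coloring of $C_i$ and hence keeping residual lists of size $\geq 3$). Both are immediate from the adjacency rules. Combining the partial colorings then produces an $\ListAsgn{L}$-coloring of $G$ in $\Bigoh{rs} = \Bigoh{\card{V(G)}}$ time.
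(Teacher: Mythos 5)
Your proposal is correct and follows essentially the same route as the paper: color $v$ with a color outside both left neighbors' lists, extend to all of $C_i$, and then apply Lemma~\ref{L:N-algorithm} to the remaining cylindrical triangulation $C(r-1,s)$, whose residual list sizes (after trimming) are exactly $4$ on the adjacent pair $(i-1,j)$, $(i-1,j+1)$, $3$ on the other exterior vertices, and $5$ on the interior. The only cosmetic difference is that you finish $C_i$ greedily along the remaining path while the paper cites Lemma~\ref{L:cycle-choosability}; both are valid and linear-time.
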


\begin{proof}
	Choose a color for \(v\) that is not in the list of either left neighbor of \(v\),
	and extend the coloring to the cycle \(C_{i}\) containing \(v\) by
	Lemma~\ref{L:cycle-choosability}. Then,
	we are left to color a graph isomorphic to \(C(r - 1, s)\) equipped with lists
	whose sizes satisfy the hypotheses of Lemma~\ref{L:N-algorithm}.
	Hence, the coloring on \(C_{i}\) extends to a proper coloring
	of \(G\) in linear time by Lemma~\ref{L:N-algorithm}, and so we are done.
\end{proof}

Note that by Lemma~\ref{L:auto} the above lemma is also true when ``left neighbors'' is replaced
by ``right neighbors'' in the statement.
Thus, it suffices to assume that the list assignment \(\ListAsgn{L}\) satisfies
the following criterion:
\begin{enumerate}[label=(C\arabic*)]
	\item\label{criterion2.1}
	Not all the lists in \(\ListAsgn{L}\) are identical, and for every vertex \((i, j) \in V(G)\),
	\(\List{L}{(i, j)} \subseteq \List{L}{(i - 1, j)} \cup \List{L}{(i - 1, j + 1)}\)
	and \(\List{L}{(i, j)} \subseteq \List{L}{(i + 1, j)} \cup \List{L}{(i + 1, j - 1)}\).
\end{enumerate}
In particular, we may assume that no column has identical lists,
for if \(C_{i}\) has identical lists, then so do \(C_{i - 1}\) and \(C_{i + 1}\)
by criterion~\ref{criterion2.1}, so all the lists in \(\ListAsgn{L}\) are identical by induction,
a contradiction.

\begin{lemma}\label{L:three-criteria}
	Suppose that \(\ListAsgn{L}\) satisfies criterion~\ref{criterion2.1}.
	\begin{enumerate}[label=(\arabic*),ref=\ref{L:three-criteria}(\arabic*)]
		\item\label{LI:1} Let \((i, j), (i, j - 1) \in V(G)\) have distinct lists.
		Suppose one of the following conditions holds:
		\begin{enumerate}[label=(\alph*),ref=\theenumi\alph*]
			\item\label{LI:part1} \(\List{L}{(i, j)} \neq \List{L}{(i - 1, j + 1)}\) and \(\List{L}{(i, j - 1)} \neq \List{L}{(i - 1, j - 1)}\);
			\item\label{LI:part2} \(\List{L}{(i, j)} \neq \List{L}{(i - 1, j + 1)}\) and \(\List{L}{(i, j - 1)} \neq \List{L}{(i - 1, j)}\);
			\item\label{LI:part3} \(\List{L}{(i, j)} \neq \List{L}{(i - 1, j)}\) and \(\List{L}{(i, j - 1)} \neq \List{L}{(i - 1, j - 1)}\).
		\end{enumerate}
		Then, \(G\) is \(\ListAsgn{L}\)-choosable in linear time.
		
		\item\label{LI:2} Suppose \(u, v \in V(G)\) are adjacent
		vertices lying on distinct columns such that \(\List{L}{u} = \List{L}{v}\).
		If for every vertex \(w \in V(G)\) that is adjacent to both \(u\) and \(v\)
		we have \(\List{L}{w} \neq \List{L}{u}\), then \(G\) is \(\ListAsgn{L}\)-choosable in linear time.
		
		\item\label{LI:3} Let \((i, j) \in V(G)\) be a vertex such that both its left neighbors
		have lists identical to \(\List{L}{(i, j)}\). Suppose that
		\(\List{L}{(i, j)} \neq \List{L}{(i, j + 1)}\) and  \(\List{L}{(i, j)} \neq \List{L}{(i, j - 1)}\).
		Then, \(G\) is \(\ListAsgn{L}\)-choosable in linear time.
	\end{enumerate}
\end{lemma}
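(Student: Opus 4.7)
All three parts follow a common template: color a small set of seed vertices inside one column, extend this to a proper coloring of that column via Lemma~\ref{L:cycle-choosability}, and then invoke Lemma~\ref{L:N-algorithm} on the cylindrical triangulation obtained by deleting that column. For Part~\ref{LI:1}, I would first observe that hypothesis~\ref{LI:part1} is subsumed by~\ref{LI:part2} or~\ref{LI:part3}: if neither of the latter held, combining their failures with~\ref{LI:part1} would force $\List{L}{(i,j-1)} = \List{L}{(i-1,j)} = \List{L}{(i,j)}$, contradicting $\List{L}{(i,j)} \neq \List{L}{(i,j-1)}$. So it suffices to treat~\ref{LI:part2} and~\ref{LI:part3}. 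In case~\ref{LI:part2}, pick $c_1 \in \List{L}{(i,j)} \setminus \List{L}{(i-1,j+1)}$ and $c_2 \in \List{L}{(i,j-1)} \setminus \List{L}{(i-1,j)}$; by criterion~\ref{criterion2.1}, $c_1$ is forced into $\List{L}{(i-1,j)}$ while $c_2$ is forced out of it, so $c_1 \neq c_2$ comes for free. Color $(i,j)$ and $(i,j-1)$ accordingly, extend to $C_i$ via Lemma~\ref{L:cycle-choosability}, and a direct tally of reductions shows that $(i-1,j)$ and $(i-1,j+1)$ both retain lists of size at least $4$, while every other exterior vertex of the cylinder $G - C_i$ retains a list of size at least $3$; Lemma~\ref{L:N-algorithm} then finishes. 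Case~\ref{LI:part3} is analogous, with the pair of $4$-lists appearing at $(i-1,j-1)$ and $(i-1,j)$ instead.

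For Part~\ref{LI:2}, I would reduce directly to Part~\ref{LI:1}. By Lemma~\ref{L:auto} one may assume $v$ lies in the column immediately to the right of $u$, so $v$ is either $(i+1,j)$ or $(i+1,j-1)$. In each sub-case, exactly one of the two common neighbors of $u,v$ lies in the same column as $v$ and is therefore a vertical neighbor of $v$ there; call this common neighbor $w$, and call the other common neighbor $w'$ (which lies in $C_i$). Then $v$ and $w$ are consecutive vertices of $C_{i+1}$ with distinct lists (since $\List{L}{v} = \List{L}{u} \neq \List{L}{w}$), so I apply Part~\ref{LI:1} to the pair $v,w$ inside $C_{i+1}$, viewing $C_i$ as the ``left'' column. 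The crucial structural observation is that the common left neighbor of $v$ and $w$ in $C_i$ is $u$ itself, while $w'$ sits in the outer left neighbor slot of one of the two. The hypotheses $\List{L}{w'} \neq \List{L}{u} = \List{L}{v}$ and $\List{L}{w} \neq \List{L}{u}$ then amount to precisely condition~\ref{LI:part2} (when $v = (i+1,j)$) or condition~\ref{LI:part3} (when $v = (i+1,j-1)$) of Part~\ref{LI:1}.

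For Part~\ref{LI:3}, the equalities $\List{L}{(i-1,j)} = \List{L}{(i-1,j+1)} = \List{L}{(i,j)}$ together with $\List{L}{(i,j)} \neq \List{L}{(i,j\pm 1)}$ let me pick $c_{-} \in \List{L}{(i,j-1)} \setminus \List{L}{(i,j)}$, $c_{+} \in \List{L}{(i,j+1)} \setminus \List{L}{(i,j)}$, and any $c_{0} \in \List{L}{(i,j)}$. These three colors are pairwise compatible on the path $(i,j-1), (i,j), (i,j+1)$ in $C_i$ because $c_{0} \in \List{L}{(i,j)}$ while $c_{\pm} \notin \List{L}{(i,j)}$, and because $(i,j-1)$ and $(i,j+1)$ are non-adjacent (as $s \geq 3$). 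Extend to $C_i$ using Lemma~\ref{L:cycle-choosability}. Both $(i-1,j)$ and $(i-1,j+1)$ then lose exactly one color from their common list $\List{L}{(i,j)}$, namely $c_0$, since $c_{-}$ and $c_{+}$ were chosen outside $\List{L}{(i,j)}$. Thus each retains a $4$-list, and every other exterior vertex loses at most two colors, so Lemma~\ref{L:N-algorithm} finishes.

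The main obstacle common to all three parts is guaranteeing that the natural color choices at adjacent seed vertices are in fact distinct; in Part~\ref{LI:1} this is precisely where criterion~\ref{criterion2.1} does the heavy lifting, placing $c_1$ and $c_2$ on opposite sides of $\List{L}{(i-1,j)}$. The conceptual highlight is the reduction in Part~\ref{LI:2}: the hypothesis on common neighbors is calibrated so that $u$ slots into the common-left-neighbor position of the pair $v,w$, making one of the disjunctive clauses of Part~\ref{LI:1} hold automatically.
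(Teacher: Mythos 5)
Your proof is correct. Parts~\ref{LI:2} and~\ref{LI:3} essentially coincide with the paper's: the reduction of Part~\ref{LI:2} to conditions~\ref{LI:part2} and~\ref{LI:part3} via the two common neighbors is exactly the paper's argument (the paper takes \(v\) in the column to the left of \(u\) rather than to the right, which is the same thing after renaming, so Lemma~\ref{L:auto} is not even needed), and in Part~\ref{LI:3} your choices \(c_{\pm} \in \List{L}{(i, j \pm 1)} \setminus \List{L}{(i, j)}\) are literally the paper's, since there \(\List{L}{(i - 1, j + 1)} = \List{L}{(i - 1, j)} = \List{L}{(i, j)}\). Part~\ref{LI:1} is where you genuinely diverge: the paper colors the \emph{left} column \(C_{i - 1}\), choosing the colors on \((i - 1, j + 1)\) and \((i - 1, j - 1)\) (or on \((i - 1, j)\) and one of these) so that the two \(4\)-lists required by Lemma~\ref{L:N-algorithm} land on \((i, j)\) and \((i, j - 1)\) in \(C_{i}\), whereas you color \(C_{i}\) itself, seeding \((i, j)\) and \((i, j - 1)\) so that the \(4\)-lists land on an adjacent pair of \(C_{i - 1}\). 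Both routes lean on criterion~\ref{criterion2.1} for the same purpose---the paper to force \(c \in \List{L}{(i, j)}\) and hence \(d \neq c\), you to force \(c_{1} \in \List{L}{(i - 1, j)}\) and hence \(c_{1} \neq c_{2}\)---and both land in the hypotheses of Lemma~\ref{L:N-algorithm}; your version buys a small simplification in that condition~\ref{LI:part1} is absorbed into~\ref{LI:part2} or~\ref{LI:part3} (a correct observation, given the two lists are distinct), while the paper treats~\ref{LI:part1} by a separate direct argument. One nitpick: the non-adjacency of \((i, j + 1)\) and \((i, j - 1)\) in Part~\ref{LI:3} requires \(s \geq 4\), not \(s \geq 3\); when \(s = 3\) one must additionally arrange \(c_{+} \neq c_{-}\), an edge case the paper's own proof of this part glosses over in exactly the same way, so it is not a gap relative to the paper.
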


\begin{proof}\hfill
	\begin{enumerate}[(1)]
		\item
		\begin{enumerate}
			\item Choose a color for \((i - 1, j + 1)\) from \(\List{L}{(i - 1, j + 1)} \setminus \List{L}{(i, j)}\),
			for \((i - 1, j - 1)\) from \(\List{L}{(i - 1, j - 1)} \setminus \List{L}{(i, j - 1)}\),
			and extend this to a proper coloring of \(C_{i - 1}\) by Lemma~\ref{L:cycle-choosability}. Then, we are
			in the scenario of Lemma~\ref{L:N-algorithm}, and so we are done.
			
			\item\label{pf:b}
			Choose a color \(c\) for \((i - 1, j)\) from \(\List{L}{(i - 1, j)} \setminus \List{L}{(i, j - 1)}\).
			By criterion~\ref{criterion2.1}, \(c \in \List{L}{(i, j)}\), so there exists a color
			\(d\ (\neq c) \in \List{L}{(i - 1, j + 1)} \setminus \List{L}{(i, j)}\).
			Color \((i - 1, j + 1)\) with \(d\) and extend this to a proper
			coloring of \(C_{i - 1}\) by Lemma~\ref{L:cycle-choosability}.
			Then, we are in the scenario of Lemma~\ref{L:N-algorithm}, and so we are done.
			
			\item This is similar to the proof of Lemma~\ref{LI:part2} above.
			Choose a color \(c\) for \((i - 1, j)\) from \(\List{L}{(i - 1, j)} \setminus \List{L}{(i, j)}\).
			By criterion~\ref{criterion2.1}, \(c \in \List{L}{(i, j - 1)}\), so there exists a color
			\(d\ (\neq c) \in \List{L}{(i - 1, j - 1)} \setminus \List{L}{(i, j - 1)}\).
			Color \((i - 1, j - 1)\) with \(d\) and extend this to a proper
			coloring of \(C_{i - 1}\) by Lemma~\ref{L:cycle-choosability}.
			Then, we are in the scenario of Lemma~\ref{L:N-algorithm}, and so we are done.
		\end{enumerate}
		
		\item Suppose \(u = (i, j)\) and \(v = (i - 1, j)\),
		Then, neither \((i - 1, j + 1)\) nor \((i, j - 1)\) has a list identical to \(\List{L}{u}\).
		But then we are in the scenario of Lemma~\ref{LI:part2}, so \(G\) is \(\ListAsgn{L}\)-choosable
		in linear time.
		Similarly, let \(u = (i, j - 1)\) and \(w = (i - 1, j)\),
		Then, neither \((i - 1, j - 1)\) nor \((i, j)\) has a list identical to \(\List{L}{u}\).
		But then we are in the scenario of Lemma~\ref{LI:part3}, so again \(G\) is \(\ListAsgn{L}\)-choosable
		in linear time.
		
		\item Choose a color for \((i, j + 1)\) from \(\List{L}{(i, j + 1)} \setminus \List{L}{(i - 1, j + 1)}\),
		for \((i, j - 1)\) from \(\List{L}{(i, j - 1)} \setminus \List{L}{(i - 1, j)}\),
		and extend this to a proper coloring of \(C_{i}\) by Lemma~\ref{L:cycle-choosability}. Then,
		we are in the scenario of Lemma~\ref{L:N-algorithm}, so we are done.
	\end{enumerate}
\end{proof}

Note that, by Lemma~\ref{L:auto}, Lemma~\ref{LI:1} is also true when the list
assignment \(\ListAsgn{L}\) instead satisfies one of three analogous conditions
relating the lists on \((i, j)\) and \((i, j - 1)\) with their right neighbors,
and Lemma~\ref{LI:3} is also true when ``left neighbors'' is replaced
by ``right neighbors'' in the statement.

Thus, in addition to criterion~\ref{criterion2.1}, we may also assume the following criteria:
\begin{enumerate}[label=(C\arabic*)]
	\setcounter{enumi}{1}
	\item\label{criterion2.2} Whenever \((i, j)\) and \((i, j - 1)\)
	have distinct lists assigned by \(\ListAsgn{L}\), one of the following
	three configurations holds:
	\begin{enumerate}[(a)]
		\item\label{config-a} \(\List{L}{(i, j)} = \List{L}{(i - 1, j + 1)}\) and \(\List{L}{(i, j - 1)} = \List{L}{(i - 1, j - 1)}\);
		\item\label{config-b} \(\List{L}{(i, j)} = \List{L}{(i - 1, j + 1)} = \List{L}{(i - 1, j)}\) and \(\List{L}{(i, j - 1)} \neq \List{L}{(i - 1, j - 1)}\);
		\item\label{config-c} \(\List{L}{(i, j)} \neq \List{L}{(i - 1, j + 1)}\) and \(\List{L}{(i, j - 1)} = \List{L}{(i - 1, j)} = \List{L}{(i - 1, j - 1)}\).
	\end{enumerate}
	
	\item\label{criterion2.3} whenever \(u\) and \(v\) are adjacent vertices
	on distinct columns with \(\List{L}{u} = \List{L}{v}\),
	there is a vertex \(w\) adjacent to both \(u\) and \(v\) such that
	\(\List{L}{w} = \List{L}{u} = \List{L}{v}\).
	
	\item\label{criterion2.4} whenever \(u\), \(v\) and \(w\) are mutually adjacent vertices having identical lists,
	with \(v\) and \(w\) lying on the same column, at least one of the vertical neighbors
	of \(u\) has a list identical to \(\List{L}{u}\).	
\end{enumerate}

\begin{figure}
\centering
	\begin{subfigure}{0.15\textwidth}
	\centering
		\begin{tikzpicture}[font=\scriptsize]
			\draw
			(1,1) grid (2,3)
			(1,2) -- (2,1)
			(1,3) -- (2,2);
			\draw[dotted]
			(1,0) -- (1,1) -- (2,0) -- (2,1)
			(1,3) -- (1,4) -- (2,3) -- (2,4);
			\draw[fill=myblue] (1,1) circle(2.5pt) node[anchor=east] {\(\List{L}{2}\)};
			\draw[fill=myyellow] (1,2) circle(2.5pt) node[anchor=east] {};
			\draw[fill=myred] (1,3) circle(2.5pt) node[anchor=east] {\(\List{L}{1}\)};
			\draw[fill=myblue] (2,1) circle(2.5pt) node[anchor=west] {\(\List{L}{2}\)};
			\draw[fill=myred] (2,2) circle(2.5pt) node[anchor=west] {\(\List{L}{1}\)};
			\draw[fill=myyellow] (2,3) circle(2.5pt) node[anchor=west] {};
			
			\node[anchor=south] at (1,4) {\(C_{i - 1}\)};
			\node[anchor=south] at (2,4) {\(C_{i}\)};
		\end{tikzpicture}
		\caption{}
	\end{subfigure}
	\begin{subfigure}{0.15\textwidth}
	\centering
		\begin{tikzpicture}[font=\scriptsize]
			\draw
			(1,1) grid (2,3)
			(1,2) -- (2,1)
			(1,3) -- (2,2);
			\draw[dotted]
			(1,0) -- (1,1) -- (2,0) -- (2,1)
			(1,3) -- (1,4) -- (2,3) -- (2,4);
			\draw[fill=mygreen] (1,1) circle(2.5pt) node[anchor=east] {\(\List{L}{3}\)};
			\draw[fill=myred] (1,2) circle(2.5pt) node[anchor=east] {\(\List{L}{1}\)};
			\draw[fill=myred] (1,3) circle(2.5pt) node[anchor=east] {\(\List{L}{1}\)};
			\draw[fill=myblue] (2,1) circle(2.5pt) node[anchor=west] {\(\List{L}{2}\)};
			\draw[fill=myred] (2,2) circle(2.5pt) node[anchor=west] {\(\List{L}{1}\)};
			\draw[fill=myyellow] (2,3) circle(2.5pt) node[anchor=west] {};
			
			\node[anchor=south] at (1,4) {\(C_{i - 1}\)};
			\node[anchor=south] at (2,4) {\(C_{i}\)};
		\end{tikzpicture}
		\caption{}
	\end{subfigure}
	\begin{subfigure}{0.15\textwidth}
	\centering
		\begin{tikzpicture}[font=\scriptsize]
			\draw
			(1,1) grid (2,3)
			(1,2) -- (2,1)
			(1,3) -- (2,2);
			\draw[dotted]
			(1,0) -- (1,1) -- (2,0) -- (2,1)
			(1,3) -- (1,4) -- (2,3) -- (2,4);
			\draw[fill=myblue] (1,1) circle(2.5pt) node[anchor=east] {\(\List{L}{2}\)};
			\draw[fill=myblue] (1,2) circle(2.5pt) node[anchor=east] {\(\List{L}{2}\)};
			\draw[fill=mygreen] (1,3) circle(2.5pt) node[anchor=east] {\(\List{L}{3}\)};
			\draw[fill=myblue] (2,1) circle(2.5pt) node[anchor=west] {\(\List{L}{2}\)};
			\draw[fill=myred] (2,2) circle(2.5pt) node[anchor=west] {\(\List{L}{1}\)};
			\draw[fill=myyellow] (2,3) circle(2.5pt) node[anchor=west] {};
			
			\node[anchor=south] at (1,4) {\(C_{i - 1}\)};
			\node[anchor=south] at (2,4) {\(C_{i}\)};
		\end{tikzpicture}
		\caption{}
	\end{subfigure}
	\caption{Illustrations of configurations~\ref{config-a} through~\ref{config-c} in criterion~\ref{criterion2.2}.}\label{F:config1}
\end{figure}

The configurations~\ref{config-a} through~\ref{config-c} in criterion~\ref{criterion2.2} are illustrated in Figure~\ref{F:config1}.
By Lemma~\ref{L:auto}, we also assume one of three analogous configurations holds
for the lists on the right neighbors of \((i, j)\) and \((i, j - 1)\) under
the hypothesis of criterion~\ref{criterion2.2}, but for the sake of brevity we avoid listing them explicitly.

We now make the following definitions. For a list \(L\),
define the \defining{list-class} of \(L\) in \(G\), denoted \(G[L]\),
to be induced subgraph of \(G\) on those vertices \(v\) such that \(\List{L}{v} = L\).
Let \(L \in \ListAsgn{L}\) and let \(H\) be a (maximal connected) component
of \(G[L]\). If \(V(H)\) is a singleton, we call \(H\) an \defining{isolated component},
else we call \(H\) a \defining{nonisolated component}.

\begin{lemma}\label{L:iso-noniso-config}
	Suppose that \(\ListAsgn{L}\) satisfies criteria~\ref{criterion2.1} through~\ref{criterion2.4}.
	\begin{enumerate}[label=(\arabic*),ref=\ref{L:iso-noniso-config}(\arabic*)]
		\item\label{LI:iso-config} Let \(H\) be an isolated component of a list-class \(G[L]\), with
		\(V(H) = \Set{ (i, j) }\). Then, there are distinct lists \(L', L'' \in \ListAsgn{L}\)
		such that \(\List{L}{(i - 1, j + 1)} = \List{L}{(i, j + 1)} = \List{L}{(i + 1, j + 1)} = \List{L}{(i + 1, j)} = L'\)
		and \(\List{L}{(i - 1, j)} = \List{L}{(i - 1, j - 1)} = \List{L}{(i, j - 1)} = \List{L}{(i + 1, j - 1)} = L''\).
		
		\item\label{LI:noniso-config} Let \(H\) be a nonisolated component of a list-class \(G[L]\), with \(v \in V(H)\).
		Then, at least one vertical neighbor of \(v\) also belongs to \(V(H)\).
	\end{enumerate}
\end{lemma}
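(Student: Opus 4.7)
The plan is to derive part~\ref{LI:iso-config} by applying criterion~\ref{criterion2.2} (and its right-neighbor analogue) to the two vertical pairs $\bigl((i, j), (i, j-1)\bigr)$ and $\bigl((i, j+1), (i, j)\bigr)$, then use criterion~\ref{criterion2.1} to force $L' \neq L''$. Taking $(i, j)$ as the top of the first pair, the isolation of $(i, j)$ gives $\List{L}{(i-1, j+1)} \neq L$, which kills configurations~\ref{config-a} and~\ref{config-b} (both demanding $\List{L}{(i-1, j+1)} = L$), forcing~\ref{config-c} and producing $L'' := \List{L}{(i-1, j-1)} = \List{L}{(i-1, j)} = \List{L}{(i, j-1)}$. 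The right-neighbor analogue (via Lemma~\ref{L:auto}) applied to the same pair rules out the corresponding two configurations (both requiring $\List{L}{(i+1, j)} = L$), forcing its analogue of~\ref{config-c} and giving $\List{L}{(i+1, j-1)} = L''$. For the pair $\bigl((i, j+1), (i, j)\bigr)$ with $(i, j+1)$ as the top, the established equality $\List{L}{(i-1, j)} = L'' \neq L$ rules out configurations~\ref{config-a} and~\ref{config-c} (both of which require $\List{L}{(i-1, j)} = L$), forcing~\ref{config-b} and giving $L' := \List{L}{(i-1, j+1)} = \List{L}{(i, j+1)}$; the right-neighbor analogue forces the analogue of~\ref{config-b} on this pair, yielding $\List{L}{(i+1, j)} = \List{L}{(i+1, j+1)} = L'$. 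For distinctness, criterion~\ref{criterion2.1} gives $L = \List{L}{(i, j)} \subseteq L'' \cup L'$, so $L' = L''$ would force $L = L'$ by cardinality, contradicting $\List{L}{(i-1, j+1)} = L' \neq L$ (which holds because $(i-1, j+1)$ is a neighbor of the isolated $(i, j)$).

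For part~\ref{LI:noniso-config}, I argue by contradiction. Suppose $v = (i, j) \in V(H)$ has no vertical neighbor in $V(H)$; since $H$ is connected and nonisolated, $v$ has a non-vertical neighbor $u \in V(H)$. Because $u$ and $v$ share the list $L$ and lie in distinct columns, criterion~\ref{criterion2.3} produces a common neighbor $w$ of $u$ and $v$ with $\List{L}{w} = L$. An enumeration over the four possible choices of $u$ (horizontal or diagonal, left or right) shows that the two common neighbors of $u$ and $v$ consist of exactly one vertical neighbor of $v$ and one non-vertical neighbor lying in the same column as $u$. If $w$ is the vertical one, we immediately contradict the hypothesis; otherwise $\{u, v, w\}$ is a triangle of identically-listed vertices with $u, w$ sharing a column, so criterion~\ref{criterion2.4} forces some vertical neighbor of $v$ to carry the list $L$---again a contradiction.

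The main obstacle I anticipate is the bookkeeping: part~\ref{LI:iso-config} requires writing out the right-neighbor analogues of configurations~\ref{config-a}--\ref{config-c} (which the paper omits for brevity), and part~\ref{LI:noniso-config} requires enumerating the four common-neighbor patterns around $v$. The distinctness of $L'$ and $L''$ is the most easily missed step, since it relies on criterion~\ref{criterion2.1} rather than on the finer criteria~\ref{criterion2.2}--\ref{criterion2.4} that supply the eight structural equalities.
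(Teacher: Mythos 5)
Your proof is correct and takes essentially the same route as the paper: part~\ref{LI:iso-config} is obtained by forcing one configuration of criterion~\ref{criterion2.2} on each of the two vertical pairs around \((i, j)\) (in the left- and right-neighbor versions) and deducing \(L' \neq L''\) from criterion~\ref{criterion2.1}, while part~\ref{LI:noniso-config} is the paper's chain through criteria~\ref{criterion2.3} and~\ref{criterion2.4} recast as a proof by contradiction. The only deviations are cosmetic: you extract \(\List{L}{(i - 1, j + 1)} = L'\) by applying criterion~\ref{criterion2.2} to the pair \((i, j + 1), (i, j)\) rather than to the pair \((i - 1, j + 1), (i - 1, j)\) as the paper does (your pair is guaranteed to have distinct lists, so this is if anything cleaner), and your naming of the right-neighbor analogues of configurations~\ref{config-a} to~\ref{config-c} differs from the one induced by Lemma~\ref{L:auto}, but the configurations you exclude, the reasons they fail, and the equalities you obtain from the surviving ones are all the correct ones.
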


\begin{proof}\hfill
	\begin{enumerate}[(1)]
		\item Since \((i, j)\) belongs to an isolated component of \(G[L]\), the lists
		on its vertical neighbors are distinct from \(L\). Let \(\List{L}{(i, j + 1)} = L'\)
		and \(\List{L}{(i, j - 1)} = L''\). By applying criterion~\ref{criterion2.2} on the vertices
		\((i, j)\) and \((i, j - 1)\) with respect to their left neighbors, we see that
		only configuration~\ref{config-c} can hold, else \((i, j)\) will not belong to an isolated component.
		Thus, \(\List{L}{(i - 1, j)} = \List{L}{(i - 1, j - 1)} = L''\) and \(\List{L}{(i, j + 1)} \neq L\).
		
		Now, if \(\List{L}{(i - 1, j)} \neq L'\), then we can apply criterion~\ref{criterion2.2} on the vertices
		\((i - 1, j + 1)\) and \((i - 1, j)\) with respect to their right neighbors, and we see that
		none of the analogues of configurations~\ref{config-a} to~\ref{config-c} hold, a contradiction. Hence,
		\(\List{L}{(i - 1, j)} = L'\).
		
		Next, by Lemma~\ref{L:auto}, we also get \(\List{L}{(i + 1, j + 1)} = \List{L}{(i + 1, j)} = L'\)
		and \(\List{L}{(i, j - 1)} = L''\).
		
		Lastly, if \(L' = L''\), then we will also have \(L = L'\) by criterion~\ref{criterion2.1}, a contradiction.
		
		\item Since \(v\) is assumed to belong to a nonisolated component \(H\) of some list-class
		\(G[L]\), let \(u \in V(H)\) with \(u\) adjacent to \(v\). If \(u\) lies in the
		same column as \(v\), then we are done, so assume that \(u\) and \(v\) lie
		in distinct columns. Then, by criterion~\ref{criterion2.3}, there is a vertex \(w\) adjacent
		to both \(u\) and \(v\) such that \(w \in V(H)\). If \(w\) lies in the same column
		as \(v\), then we are done, so assume that \(v\) and \(w\) lie in distinct columns.
		Then, \(u\) and \(w\) lie on the same column, so by criterion~\ref{criterion2.4} at least
		one of the vertical neighbors of \(v\) also belongs to \(V(H)\).
	\end{enumerate}
\end{proof}

\begin{figure}
\centering
	\begin{tikzpicture}[font=\scriptsize]
		\draw (1,1) grid (3,3);
		\foreach \x in {1,2}
		\foreach \y in {2,3}
		\draw (\x,\y) -- (\x+1, \y-1);
		\draw[dotted]
		(1,0) -- (1,1) -- (2,0) -- (2,1) -- (3,0) -- (3,1)
		(1,3) -- (1,4) -- (2,3) -- (2,4) -- (3,3) -- (3,4);
		
		\draw[fill=myblue] (1,1) circle(2.5pt) node[anchor=east] {\(L''\)};
		\draw[fill=myblue] (1,2) circle(2.5pt) node[anchor=east] {\(L''\)};
		\draw[fill=mygreen] (1,3) circle(2.5pt) node[anchor=east] {\(L'\)};
		
		\draw[fill=myblue] (2,1) circle(2.5pt) node[anchor=south west] {\(L''\)};
		\draw[fill=myred] (2,2) circle(2.5pt) node[anchor=south west] {\(L\)};
		\draw[fill=mygreen] (2,3) circle(2.5pt) node[anchor=south west] {\(L'\)};
		
		\draw[fill=myblue] (3,1) circle(2.5pt) node[anchor=west] {\(L''\)};
		\draw[fill=mygreen] (3,2) circle(2.5pt) node[anchor=west] {\(L'\)};
		\draw[fill=mygreen] (3,3) circle(2.5pt) node[anchor=west] {\(L'\)};
		
		\node[anchor=south] at (1,4) {\(C_{i - 1}\)};
		\node[anchor=south] at (2,4) {\(C_{i}\)};
		\node[anchor=south] at (3,4) {\(C_{i + 1}\)};
	\end{tikzpicture}
	\caption{The configuration of an isolated component in Lemma~\ref{LI:iso-config}.}\label{F:iso}
\end{figure}

The configuration in Lemma~\ref{LI:iso-config} is illustrated in Figure~\ref{F:iso}.
Note that Lemma~\ref{LI:noniso-config} implies that for every \(v \in V(H)\),
where \(H\) is a nonisolated component of some list-class \(G[L]\), at least
one left neighbor and one right neighbor of \(v\) also belongs to \(V(H)\),
by criterion~\ref{criterion2.1}.
Hence, Lemma~\ref{LI:noniso-config} can be applied successively on vertices across columns,
starting from any \(v \in V(H)\).
Thus, if \((i, j)\) and \((i, j - 1)\) are adjacent vertices in the column \(C_{i}\)
with distinct lists, then using Lemma~\ref{L:iso-noniso-config}, we can pin down the possible list configurations
on the nearby vertices in the columns \(C_{i+1}\) and \(C_{i+2}\) to a manageable
number, as follows.

\begin{lemma}\label{L:without-iso}
	Suppose that \(\ListAsgn{L}\) satisfies criteria~\ref{criterion2.1} to~\ref{criterion2.4}.
	Let \((i, j + 1), (i, j) \in V(G)\) have distinct lists \(\List{L}{1}, \List{L}{2}\),
	respectively, and suppose that neither vertex belongs to an isolated component.
	Then, one of the following configurations holds:
	\begin{enumerate}[label=(\Roman*)]
		\item\label{config-I} The vertices \((i, k)\), \((i + 1, k)\) and \((i + 2, k)\) have lists
		identical to \(\List{L}{1}\) for \(k = j + 2, j + 1\),
		and have lists identical to \(\List{L}{2}\) for \(k = j, j - 1\).
		
		\item\label{config-II} The vertices \((i, k)\), \((i + 1, k)\) and \((i + 2, k - 1)\) have lists
		identical to \(\List{L}{1}\) for \(k = j + 2, j + 1\),
		and have lists identical to \(\List{L}{2}\) for \(k = j, j - 1\).
		
		\item\label{config-III} The vertices \((i, k)\), \((i + 1, k - 1)\) and \((i + 2, k - 1)\) have lists
		identical to \(\List{L}{1}\) for \(k = j + 2, j + 1\),
		and have lists identical to \(\List{L}{2}\) for \(k = j, j - 1\).
		
		\item\label{config-IV} The vertices \((i, k)\), \((i + 1, k - 1)\) and \((i + 2, k - 2)\) have lists
		identical to \(\List{L}{1}\) for \(k = j + 2, j + 1\),
		and have lists identical to \(\List{L}{2}\) for \(k = j, j - 1\).
		
		\item\label{config-V} The vertices \((i, k)\), \((i + 1, k)\) and \((i + 2, k)\) have lists
		identical to \(\List{L}{1}\) for \(k = j + 2, j + 1\), the vertices
		\((i, k)\), \((i + 1, k)\) and \((i + 2, k - 1)\)
		have lists identical to \(\List{L}{2}\) for \(k = j, j - 1\),
		and the vertex \((i + 2, j)\) belongs to an isolated component of some list-class \(G[\List{L}{3}]\),
		where \(\List{L}{3} \neq \List{L}{1}\) and \(\List{L}{3} \neq \List{L}{2}\).
		
		\item\label{config-VI} The vertices \((i, k)\), \((i + 1, k - 1)\) and \((i + 2, k - 1)\) have lists
		identical to \(\List{L}{1}\) for \(k = j + 2, j + 1\), the vertices
		\((i, k)\), \((i + 1, k - 1)\) and \((i + 2, k - 2)\)
		have lists identical to \(\List{L}{2}\) for \(k = j, j - 1\),
		and the vertex \((i + 2, j - 1)\) belongs to an isolated component of some list-class \(G[\List{L}{3}]\),
		where \(\List{L}{3} \neq \List{L}{1}\) and \(\List{L}{3} \neq \List{L}{2}\).
		
		\item\label{config-VII} The vertices \((i, k)\), \((i + 1, k)\) and \((i + 2, k - 1)\) have lists
		identical to \(\List{L}{1}\) for \(k = j + 2, j + 1\), the vertices
		\((i, k)\), \((i + 1, k - 1)\) and \((i + 2, k - 1)\)
		have lists identical to \(\List{L}{2}\) for \(k = j, j - 1\),
		and the vertex \((i + 1, j)\) belongs to an isolated component of some list-class \(G[\List{L}{3}]\),
		where \(\List{L}{3} \neq \List{L}{1}\) and \(\List{L}{3} \neq \List{L}{2}\).
	\end{enumerate}
\end{lemma}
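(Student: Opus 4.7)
The plan is to iteratively pin down the lists on the vertices in columns \(C_{i+1}\) and \(C_{i+2}\) by applying Lemma~\ref{L:iso-noniso-config} (both parts, together with the extension noted in the remark after it) and the right-neighbor analog of criterion~\ref{criterion2.2} (whose validity follows from Lemma~\ref{L:auto}).

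First, I will propagate the pattern of \(\List{L}{1}, \List{L}{2}\) along column \(C_i\): since \((i, j+1), (i, j)\) are nonisolated, Lemma~\ref{LI:noniso-config} will force \(\List{L}{(i, j+2)} = \List{L}{1}\) and \(\List{L}{(i, j-1)} = \List{L}{2}\). Next, I will apply the right-neighbor analog of criterion~\ref{criterion2.2} to the pair \((i, j+1), (i, j)\); of the three possible configurations, two will be eliminated because they leave \((i, j)\) or \((i, j+1)\) with no right neighbor carrying its own list, contradicting the nonisolation extension of Lemma~\ref{LI:noniso-config}. The surviving configuration yields \(\List{L}{(i+1, j+1)} = \List{L}{1}\) and \(\List{L}{(i+1, j-1)} = \List{L}{2}\).

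The remainder of the proof is a case analysis on \(\List{L}{(i+1, j)}\), which must be \(\List{L}{1}\), \(\List{L}{2}\), or some third list \(\List{L}{3}\). In the third case, both vertical neighbors of \((i+1, j)\) carry lists different from \(\List{L}{3}\), so by Lemma~\ref{LI:noniso-config} the vertex \((i+1, j)\) is isolated; Lemma~\ref{LI:iso-config} will then pin down the lists on its surrounding vertices, and a further application of the right-neighbor analog of criterion~\ref{criterion2.2} to \((i+1, j), (i+1, j-1)\) will propagate \(\List{L}{2}\) to \((i+2, j-2)\), giving configuration~\ref{config-VII}. In the case \(\List{L}{(i+1, j)} = \List{L}{2}\), I will argue that \((i+1, j+1)\) is nonisolated (otherwise Lemma~\ref{LI:iso-config} would demand \(\List{L}{(i+1, j+2)}\) equal the ``other'' isolated-class list, contradicting the already established \(\List{L}{(i, j+2)} = \List{L}{1}\)), which forces \(\List{L}{(i+1, j+2)} = \List{L}{1}\); then the right-neighbor analog of criterion~\ref{criterion2.2} applied to \((i+1, j+1), (i+1, j)\), together with a subcase split on \(\List{L}{(i+2, j)}\), will yield configurations~\ref{config-I}, \ref{config-II}, and~\ref{config-V}. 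The case \(\List{L}{(i+1, j)} = \List{L}{1}\) is entirely symmetric and will yield configurations~\ref{config-III}, \ref{config-IV}, and~\ref{config-VI}.

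The main obstacle is organizational rather than conceptual: each of the various subcases requires a separate verification that the excluded configurations of the right-neighbor analog of criterion~\ref{criterion2.2} really do violate the nonisolation condition, and one must take care not to conflate the ``middle'' vertices on \(C_{i+1}\) versus \(C_{i+2}\) that drive the successive subcase splits. The core reasoning, however, is uniform and leans entirely on the framework established by Lemmas~\ref{L:three-criteria} and~\ref{L:iso-noniso-config}.
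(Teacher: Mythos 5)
Your proposal is correct and follows essentially the same route as the paper: propagate the lists vertically along \(C_i\) via Lemma~\ref{LI:noniso-config}, pin down \(\List{L}{(i+1,j+1)}\) and \(\List{L}{(i+1,j-1)}\), split into three cases according to whether \(\List{L}{(i+1,j)}\) is \(\List{L}{1}\), \(\List{L}{2}\), or a third list forcing an isolated vertex handled by Lemma~\ref{LI:iso-config}, and then repeat the trichotomy on \((i+2,j)\) to obtain the seven configurations as \(3+3+1\). The only cosmetic difference is that you invoke the right-neighbor analogue of criterion~\ref{criterion2.2} and eliminate two of its configurations explicitly where the paper appeals more tersely to criterion~\ref{criterion2.1} together with the remark following Lemma~\ref{L:iso-noniso-config}; these justifications are equivalent.
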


\begin{figure}
\centering
\begingroup
  \renewcommand\thesubfigure{\Roman{subfigure}}

	\begin{subfigure}{0.22\textwidth}
	\centering
		\begin{tikzpicture}[font=\scriptsize]
			\draw (1,1) grid (3,6);
			\foreach \x in {1,2}
			\foreach \y in {2,...,6}
			\draw (\x,\y) -- (\x+1,\y-1);
			\draw[dotted]
			(1,0) -- (1,1) -- (2,0) -- (2,1) -- (3,0) -- (3,1)
			(1,6) -- (1,7) -- (2,6) -- (2,7) -- (3,6) -- (3,7);
			
			\draw[fill=myyellow] (1,1) circle(2.5pt) node[anchor=east] {};
			\draw[fill=myyellow] (1,2) circle(2.5pt) node[anchor=east] {};
			\draw[fill=myblue] (1,3) circle(2.5pt) node[anchor=east] {\(\List{L}{2}\)};
			\draw[fill=myblue] (1,4) circle(2.5pt) node[anchor=east] {\(\List{L}{2}\)};
			\draw[fill=myred] (1,5) circle(2.5pt) node[anchor=east] {\(\List{L}{1}\)};
			\draw[fill=myred] (1,6) circle(2.5pt) node[anchor=east] {\(\List{L}{1}\)};
			\draw[fill=myyellow] (2,1) circle(2.5pt) node[anchor=south west] {};
			\draw[fill=myyellow] (2,2) circle(2.5pt) node[anchor=south west] {};
			\draw[fill=myblue] (2,3) circle(2.5pt) node[anchor=south west] {\(\List{L}{2}\)};
			\draw[fill=myblue] (2,4) circle(2.5pt) node[anchor=south west] {\(\List{L}{2}\)};
			\draw[fill=myred] (2,5) circle(2.5pt) node[anchor=south west] {\(\List{L}{1}\)};
			\draw[fill=myred] (2,6) circle(2.5pt) node[anchor=south west] {\(\List{L}{1}\)};
			\draw[fill=myyellow] (3,1) circle(2.5pt) node[anchor=west] {};
			\draw[fill=myyellow] (3,2) circle(2.5pt) node[anchor=west] {};
			\draw[fill=myblue] (3,3) circle(2.5pt) node[anchor=west] {\(\List{L}{2}\)};
			\draw[fill=myblue] (3,4) circle(2.5pt) node[anchor=west] {\(\List{L}{2}\)};
			\draw[fill=myred] (3,5) circle(2.5pt) node[anchor=west] {\(\List{L}{1}\)};
			\draw[fill=myred] (3,6) circle(2.5pt) node[anchor=west] {\(\List{L}{1}\)};
			
			\node[anchor=south] at (1,7) {\(C_{i}\)};
			\node[anchor=south] at (2,7) {\(C_{i + 1}\)};
			\node[anchor=south] at (3,7) {\(C_{i + 2}\)};
		\end{tikzpicture}
		\caption{}
	\end{subfigure}
	\begin{subfigure}{0.22\textwidth}
	\centering
		\begin{tikzpicture}[font=\scriptsize]
			\draw (1,1) grid (3,6);
			\foreach \x in {1,2}
			\foreach \y in {2,...,6}
			\draw (\x,\y) -- (\x+1,\y-1);
			\draw[dotted]
			(1,0) -- (1,1) -- (2,0) -- (2,1) -- (3,0) -- (3,1)
			(1,6) -- (1,7) -- (2,6) -- (2,7) -- (3,6) -- (3,7);
			
			\draw[fill=myyellow] (1,1) circle(2.5pt) node[anchor=east] {};
			\draw[fill=myyellow] (1,2) circle(2.5pt) node[anchor=east] {};
			\draw[fill=myblue] (1,3) circle(2.5pt) node[anchor=east] {\(\List{L}{2}\)};
			\draw[fill=myblue] (1,4) circle(2.5pt) node[anchor=east] {\(\List{L}{2}\)};
			\draw[fill=myred] (1,5) circle(2.5pt) node[anchor=east] {\(\List{L}{1}\)};
			\draw[fill=myred] (1,6) circle(2.5pt) node[anchor=east] {\(\List{L}{1}\)};
			\draw[fill=myyellow] (2,1) circle(2.5pt) node[anchor=south west] {};
			\draw[fill=myyellow] (2,2) circle(2.5pt) node[anchor=south west] {};
			\draw[fill=myblue] (2,3) circle(2.5pt) node[anchor=south west] {\(\List{L}{2}\)};
			\draw[fill=myblue] (2,4) circle(2.5pt) node[anchor=south west] {\(\List{L}{2}\)};
			\draw[fill=myred] (2,5) circle(2.5pt) node[anchor=south west] {\(\List{L}{1}\)};
			\draw[fill=myred] (2,6) circle(2.5pt) node[anchor=south west] {\(\List{L}{1}\)};
			\draw[fill=myyellow] (3,1) circle(2.5pt) node[anchor=west] {};
			\draw[fill=myblue] (3,2) circle(2.5pt) node[anchor=west] {\(\List{L}{2}\)};
			\draw[fill=myblue] (3,3) circle(2.5pt) node[anchor=west] {\(\List{L}{2}\)};
			\draw[fill=myred] (3,4) circle(2.5pt) node[anchor=west] {\(\List{L}{1}\)};
			\draw[fill=myred] (3,5) circle(2.5pt) node[anchor=west] {\(\List{L}{1}\)};
			\draw[fill=myyellow] (3,6) circle(2.5pt) node[anchor=west] {};
			
			\node[anchor=south] at (1,7) {\(C_{i}\)};
			\node[anchor=south] at (2,7) {\(C_{i + 1}\)};
			\node[anchor=south] at (3,7) {\(C_{i + 2}\)};
		\end{tikzpicture}
		\caption{}
	\end{subfigure}
	\begin{subfigure}{0.22\textwidth}
	\centering
		\begin{tikzpicture}[font=\scriptsize]
			\draw (1,1) grid (3,6);
			\foreach \x in {1,2}
			\foreach \y in {2,...,6}
			\draw (\x,\y) -- (\x+1,\y-1);
			\draw[dotted]
			(1,0) -- (1,1) -- (2,0) -- (2,1) -- (3,0) -- (3,1)
			(1,6) -- (1,7) -- (2,6) -- (2,7) -- (3,6) -- (3,7);
			
			\draw[fill=myyellow] (1,1) circle(2.5pt) node[anchor=east] {};
			\draw[fill=myyellow] (1,2) circle(2.5pt) node[anchor=east] {};
			\draw[fill=myblue] (1,3) circle(2.5pt) node[anchor=east] {\(\List{L}{2}\)};
			\draw[fill=myblue] (1,4) circle(2.5pt) node[anchor=east] {\(\List{L}{2}\)};
			\draw[fill=myred] (1,5) circle(2.5pt) node[anchor=east] {\(\List{L}{1}\)};
			\draw[fill=myred] (1,6) circle(2.5pt) node[anchor=east] {\(\List{L}{1}\)};
			\draw[fill=myyellow] (2,1) circle(2.5pt) node[anchor=south west] {};
			\draw[fill=myblue] (2,2) circle(2.5pt) node[anchor=south west] {\(\List{L}{2}\)};
			\draw[fill=myblue] (2,3) circle(2.5pt) node[anchor=south west] {\(\List{L}{2}\)};
			\draw[fill=myred] (2,4) circle(2.5pt) node[anchor=south west] {\(\List{L}{1}\)};
			\draw[fill=myred] (2,5) circle(2.5pt) node[anchor=south west] {\(\List{L}{1}\)};
			\draw[fill=myyellow] (2,6) circle(2.5pt) node[anchor=south west] {};
			\draw[fill=myyellow] (3,1) circle(2.5pt) node[anchor=west] {};
			\draw[fill=myblue] (3,2) circle(2.5pt) node[anchor=west] {\(\List{L}{2}\)};
			\draw[fill=myblue] (3,3) circle(2.5pt) node[anchor=west] {\(\List{L}{2}\)};
			\draw[fill=myred] (3,4) circle(2.5pt) node[anchor=west] {\(\List{L}{1}\)};
			\draw[fill=myred] (3,5) circle(2.5pt) node[anchor=west] {\(\List{L}{1}\)};
			\draw[fill=myyellow] (3,6) circle(2.5pt) node[anchor=west] {};
			
			\node[anchor=south] at (1,7) {\(C_{i}\)};
			\node[anchor=south] at (2,7) {\(C_{i + 1}\)};
			\node[anchor=south] at (3,7) {\(C_{i + 2}\)};
		\end{tikzpicture}
		\caption{}
	\end{subfigure}
	\begin{subfigure}{0.22\textwidth}
	\centering
		\begin{tikzpicture}[font=\scriptsize]
			\draw (1,1) grid (3,6);
			\foreach \x in {1,2}
			\foreach \y in {2,...,6}
			\draw (\x,\y) -- (\x+1,\y-1);
			\draw[dotted]
			(1,0) -- (1,1) -- (2,0) -- (2,1) -- (3,0) -- (3,1)
			(1,6) -- (1,7) -- (2,6) -- (2,7) -- (3,6) -- (3,7);
			
			\draw[fill=myyellow] (1,1) circle(2.5pt) node[anchor=east] {};
			\draw[fill=myyellow] (1,2) circle(2.5pt) node[anchor=east] {};
			\draw[fill=myblue] (1,3) circle(2.5pt) node[anchor=east] {\(\List{L}{2}\)};
			\draw[fill=myblue] (1,4) circle(2.5pt) node[anchor=east] {\(\List{L}{2}\)};
			\draw[fill=myred] (1,5) circle(2.5pt) node[anchor=east] {\(\List{L}{1}\)};
			\draw[fill=myred] (1,6) circle(2.5pt) node[anchor=east] {\(\List{L}{1}\)};
			\draw[fill=myyellow] (2,1) circle(2.5pt) node[anchor=south west] {};
			\draw[fill=myblue] (2,2) circle(2.5pt) node[anchor=south west] {\(\List{L}{2}\)};
			\draw[fill=myblue] (2,3) circle(2.5pt) node[anchor=south west] {\(\List{L}{2}\)};
			\draw[fill=myred] (2,4) circle(2.5pt) node[anchor=south west] {\(\List{L}{1}\)};
			\draw[fill=myred] (2,5) circle(2.5pt) node[anchor=south west] {\(\List{L}{1}\)};
			\draw[fill=myyellow] (2,6) circle(2.5pt) node[anchor=south west] {};
			\draw[fill=myblue] (3,1) circle(2.5pt) node[anchor=west] {\(\List{L}{2}\)};
			\draw[fill=myblue] (3,2) circle(2.5pt) node[anchor=west] {\(\List{L}{2}\)};
			\draw[fill=myred] (3,3) circle(2.5pt) node[anchor=west] {\(\List{L}{1}\)};
			\draw[fill=myred] (3,4) circle(2.5pt) node[anchor=west] {\(\List{L}{1}\)};
			\draw[fill=myyellow] (3,5) circle(2.5pt) node[anchor=west] {};
			\draw[fill=myyellow] (3,6) circle(2.5pt) node[anchor=west] {};
			
			\node[anchor=south] at (1,7) {\(C_{i}\)};
			\node[anchor=south] at (2,7) {\(C_{i + 1}\)};
			\node[anchor=south] at (3,7) {\(C_{i + 2}\)};
		\end{tikzpicture}
		\caption{}
	\end{subfigure}
	\caption{Illustration of configurations \ref{config-I} through \ref{config-IV} of Lemma~\ref{L:without-iso}.}\label{F:noniso-config1}
\endgroup
\end{figure}

\begin{figure}
\centering
\begingroup
  \renewcommand\thesubfigure{\Roman{subfigure}}

	\begin{subfigure}{0.3\textwidth}
	\centering
	\setcounter{subfigure}{4}
		\begin{tikzpicture}[font=\scriptsize]
			\draw (1,1) grid (3,6);
			\foreach \x in {1,2}
			\foreach \y in {2,...,6}
			\draw (\x,\y) -- (\x+1,\y-1);
			\draw[dotted]
			(1,0) -- (1,1) -- (2,0) -- (2,1) -- (3,0) -- (3,1)
			(1,6) -- (1,7) -- (2,6) -- (2,7) -- (3,6) -- (3,7);
			
			\draw[fill=myyellow] (1,1) circle(2.5pt) node[anchor=east] {};
			\draw[fill=myyellow] (1,2) circle(2.5pt) node[anchor=east] {};
			\draw[fill=myblue] (1,3) circle(2.5pt) node[anchor=east] {\(\List{L}{2}\)};
			\draw[fill=myblue] (1,4) circle(2.5pt) node[anchor=east] {\(\List{L}{2}\)};
			\draw[fill=myred] (1,5) circle(2.5pt) node[anchor=east] {\(\List{L}{1}\)};
			\draw[fill=myred] (1,6) circle(2.5pt) node[anchor=east] {\(\List{L}{1}\)};
			\draw[fill=myyellow] (2,1) circle(2.5pt) node[anchor=south west] {};
			\draw[fill=myyellow] (2,2) circle(2.5pt) node[anchor=south west] {};
			\draw[fill=myblue] (2,3) circle(2.5pt) node[anchor=south west] {\(\List{L}{2}\)};
			\draw[fill=myblue] (2,4) circle(2.5pt) node[anchor=south west] {\(\List{L}{2}\)};
			\draw[fill=myred] (2,5) circle(2.5pt) node[anchor=south west] {\(\List{L}{1}\)};
			\draw[fill=myred] (2,6) circle(2.5pt) node[anchor=south west] {\(\List{L}{1}\)};
			\draw[fill=myyellow] (3,1) circle(2.5pt) node[anchor=west] {};
			\draw[fill=myblue] (3,2) circle(2.5pt) node[anchor=west] {\(\List{L}{2}\)};
			\draw[fill=myblue] (3,3) circle(2.5pt) node[anchor=west] {\(\List{L}{2}\)};
			\draw[fill=mygreen] (3,4) circle(2.5pt) node[anchor=west] {\(\List{L}{3}\)};
			\draw[fill=myred] (3,5) circle(2.5pt) node[anchor=west] {\(\List{L}{1}\)};
			\draw[fill=myred] (3,6) circle(2.5pt) node[anchor=west] {\(\List{L}{1}\)};
			
			\node[anchor=south] at (1,7) {\(C_{i}\)};
			\node[anchor=south] at (2,7) {\(C_{i + 1}\)};
			\node[anchor=south] at (3,7) {\(C_{i + 2}\)};
		\end{tikzpicture}
		\caption{}
	\end{subfigure}
	\begin{subfigure}{0.3\textwidth}
	\centering
		\begin{tikzpicture}[font=\scriptsize]
			\node at (2,8) {\vphantom{(2,7)}};
			\draw (1,1) grid (3,6);
			\foreach \x in {1,2}
			\foreach \y in {2,...,6}
			\draw (\x,\y) -- (\x+1,\y-1);
			\draw[dotted]
			(1,0) -- (1,1) -- (2,0) -- (2,1) -- (3,0) -- (3,1)
			(1,6) -- (1,7) -- (2,6) -- (2,7) -- (3,6) -- (3,7);
			
			\draw[fill=myyellow] (1,1) circle(2.5pt) node[anchor=east] {};
			\draw[fill=myyellow] (1,2) circle(2.5pt) node[anchor=east] {};
			\draw[fill=myblue] (1,3) circle(2.5pt) node[anchor=east] {\(\List{L}{2}\)};
			\draw[fill=myblue] (1,4) circle(2.5pt) node[anchor=east] {\(\List{L}{2}\)};
			\draw[fill=myred] (1,5) circle(2.5pt) node[anchor=east] {\(\List{L}{1}\)};
			\draw[fill=myred] (1,6) circle(2.5pt) node[anchor=east] {\(\List{L}{1}\)};
			\draw[fill=myyellow] (2,1) circle(2.5pt) node[anchor=south west] {};
			\draw[fill=myblue] (2,2) circle(2.5pt) node[anchor=south west] {\(\List{L}{2}\)};
			\draw[fill=myblue] (2,3) circle(2.5pt) node[anchor=south west] {\(\List{L}{2}\)};
			\draw[fill=myred] (2,4) circle(2.5pt) node[anchor=south west] {\(\List{L}{1}\)};
			\draw[fill=myred] (2,5) circle(2.5pt) node[anchor=south west] {\(\List{L}{1}\)};
			\draw[fill=myyellow] (2,6) circle(2.5pt) node[anchor=south west] {};
			\draw[fill=myblue] (3,1) circle(2.5pt) node[anchor=west] {\(\List{L}{2}\)};
			\draw[fill=myblue] (3,2) circle(2.5pt) node[anchor=west] {\(\List{L}{2}\)};
			\draw[fill=mygreen] (3,3) circle(2.5pt) node[anchor=west] {\(\List{L}{3}\)};
			\draw[fill=myred] (3,4) circle(2.5pt) node[anchor=west] {\(\List{L}{1}\)};
			\draw[fill=myred] (3,5) circle(2.5pt) node[anchor=west] {\(\List{L}{1}\)};
			\draw[fill=myyellow] (3,6) circle(2.5pt) node[anchor=west] {};
			
			\node[anchor=south] at (1,7) {\(C_{i}\)};
			\node[anchor=south] at (2,7) {\(C_{i + 1}\)};
			\node[anchor=south] at (3,7) {\(C_{i + 2}\)};
		\end{tikzpicture}
		\caption{}
	\end{subfigure}
	\begin{subfigure}{0.3\textwidth}
	\centering
		\begin{tikzpicture}[font=\scriptsize]
			\draw (1,1) grid (3,6);
			\foreach \x in {1,2}
			\foreach \y in {2,...,6}
			\draw (\x,\y) -- (\x+1,\y-1);
			\draw[dotted]
			(1,0) -- (1,1) -- (2,0) -- (2,1) -- (3,0) -- (3,1)
			(1,6) -- (1,7) -- (2,6) -- (2,7) -- (3,6) -- (3,7);
			
			\draw[fill=myyellow] (1,1) circle(2.5pt) node[anchor=east] {};
			\draw[fill=myyellow] (1,2) circle(2.5pt) node[anchor=east] {};
			\draw[fill=myblue] (1,3) circle(2.5pt) node[anchor=east] {\(\List{L}{2}\)};
			\draw[fill=myblue] (1,4) circle(2.5pt) node[anchor=east] {\(\List{L}{2}\)};
			\draw[fill=myred] (1,5) circle(2.5pt) node[anchor=east] {\(\List{L}{1}\)};
			\draw[fill=myred] (1,6) circle(2.5pt) node[anchor=east] {\(\List{L}{1}\)};
			\draw[fill=myyellow] (2,1) circle(2.5pt) node[anchor=south west] {};
			\draw[fill=myblue] (2,2) circle(2.5pt) node[anchor=south west] {\(\List{L}{2}\)};
			\draw[fill=myblue] (2,3) circle(2.5pt) node[anchor=south west] {\(\List{L}{2}\)};
			\draw[fill=mygreen] (2,4) circle(2.5pt) node[anchor=south west] {\(\List{L}{3}\)};
			\draw[fill=myred] (2,5) circle(2.5pt) node[anchor=south west] {\(\List{L}{1}\)};
			\draw[fill=myred] (2,6) circle(2.5pt) node[anchor=south west] {\(\List{L}{1}\)};
			\draw[fill=myyellow] (3,1) circle(2.5pt) node[anchor=west] {};
			\draw[fill=myblue] (3,2) circle(2.5pt) node[anchor=west] {\(\List{L}{2}\)};
			\draw[fill=myblue] (3,3) circle(2.5pt) node[anchor=west] {\(\List{L}{2}\)};
			\draw[fill=myred] (3,4) circle(2.5pt) node[anchor=west] {\(\List{L}{1}\)};
			\draw[fill=myred] (3,5) circle(2.5pt) node[anchor=west] {\(\List{L}{1}\)};
			\draw[fill=myyellow] (3,6) circle(2.5pt) node[anchor=west] {};
			
			\node[anchor=south] at (1,7) {\(C_{i}\)};
			\node[anchor=south] at (2,7) {\(C_{i + 1}\)};
			\node[anchor=south] at (3,7) {\(C_{i + 2}\)};
		\end{tikzpicture}
		\caption{}
	\end{subfigure}
	\caption{Illustration of the configurations \ref{config-V} through \ref{config-VII} of Lemma~\ref{L:without-iso}.}\label{F:noniso-config2}
\endgroup
\end{figure}

\begin{proof}
	We start with \(\List{L}{(i, j + 1)} = L_{1}\) and \(\List{L}{(i, j)} = L_{2}\).
	By Lemma~\ref{LI:noniso-config}, this implies that \(\List{L}{(i, j + 2)} = L_{1}\)
	and \(\List{L}{(i, j - 1)} = L_{2}\). By criterion~\ref{criterion2.1},
	\(\List{L}{(i + 1, j + 1)} = L_{1}\) and \(\List{L}{(i + 1, j - 1)} = L_{2}\).
	Now, again by Lemma~\ref{LI:noniso-config}, we have three cases:
	\begin{enumerate}[(\arabic*)]
		\item\label{case1} \(\List{L}{(i + 1, j + 2)} = L_{1}\) and \(\List{L}{(i + 1, j)} = L_{2}\);
		\item\label{case2} \(\List{L}{(i + 1, j)} = L_{1}\) and \(\List{L}{(i + 1, j - 2)} = L_{2}\);
		\item\label{case3} \(\List{L}{(i + 1, j + 2)} = L_{1}\), \(\List{L}{(i + 1, j - 2)} = L_{2}\) and
		\(\List{L}{(i + 1, j)} = L_{3}\) where \(L_{1} \neq L_{3}\) and \(L_{2} \neq L_{3}\). In particular,
		by Lemma~\ref{LI:noniso-config}, \((i + 1, j)\) must belong to an isolated component of the list-class \(G[L_{3}]\).
	\end{enumerate}
	We consider each of these cases in turn.
	
	First, suppose case~\ref{case1} holds. Then, by criterion~\ref{criterion2.1},
	\(\List{L}{(i + 2, j + 1)} = L_{1}\) and \(\List{L}{(i + 2, j - 1)} = L_{2}\).
	Then, again by Lemma~\ref{LI:noniso-config}, we have three cases:
	\begin{itemize}
		\item \(\List{L}{(i + 2, j + 2)} = L_{1}\) and \(\List{L}{(i + 2, j)} = L_{2}\). This
		is configuration~\ref{config-I}.
		\item \(\List{L}{(i + 2, j)} = L_{1}\) and \(\List{L}{(i + 2, j - 2)} = L_{2}\). This
		is configuration~\ref{config-II}.
		\item \(\List{L}{(i + 2, j + 2)} = L_{1}\), \(\List{L}{(i + 2, j - 2)} = L_{2}\) and
		\(\List{L}{(i + 2, j)} = L_{3}\) where \(L_{1} \neq L_{3}\) and \(L_{2} \neq L_{3}\). In particular,
		by Lemma~\ref{LI:noniso-config}, \((i + 2, j)\) must belong to an isolated component of the list-class \(G[L_{3}]\). This
		is configuration~\ref{config-V}.
	\end{itemize}
	
	Next, suppose case~\ref{case2} holds. Then, by criterion~\ref{criterion2.1},
	\(\List{L}{(i + 2, j)} = L_{1}\) and \(\List{L}{(i + 2, j - 2)} = L_{2}\).
	Again by Lemma~\ref{LI:noniso-config}, we have three cases:
	\begin{itemize}
		\item \(\List{L}{(i + 2, j + 1)} = L_{1}\) and \(\List{L}{(i + 2, j - 1)} = L_{2}\). This
		is configuration~\ref{config-III}.
		\item \(\List{L}{(i + 2, j - 1)} = L_{1}\) and \(\List{L}{(i + 2, j - 3)} = L_{2}\). This
		is configuration~\ref{config-IV}.
		\item \(\List{L}{(i + 2, j + 1)} = L_{1}\), \(\List{L}{(i + 2, j - 3)} = L_{2}\) and
		\(\List{L}{(i + 2, j - 1)} = L_{3}\) where \(L_{1} \neq L_{3}\) and \(L_{2} \neq L_{3}\). In particular,
		by Lemma~\ref{LI:noniso-config}, \((i + 2, j - 1)\) must belong to an isolated component of the list-class \(G[L_{3}]\). This
		is configuration~\ref{config-VI}.
	\end{itemize}
	
	Lastly, suppose case~\ref{case3} holds. Then, by Lemma~\ref{LI:iso-config},
	\(\List{L}{(i + 2, j + 1)} = \List{L}{(i + 2, j)} = L_{1}\) and
	\(\List{L}{(i + 2, j - 1)} = L_{2}\). By Lemma~\ref{LI:noniso-config}, we also have \(\List{L}{(i + 2, j - 2)} = L_{2}\).
	This is configuration~\ref{config-VII}.
\end{proof}

\begin{lemma}\label{L:with-iso}
	Suppose that \(\ListAsgn{L}\) satisfies criteria~\ref{criterion2.1} to~\ref{criterion2.4}.
	Let \((i, j + 1)\), \((i, j)\) and \((i, j - 1)\) have mutually distinct lists
	\(\List{L}{1}\), \(\List{L}{3}\) and \(\List{L}{2}\), respectively,
	and suppose that \((i, j)\) corresponds to an isolated component in \(G[\List{L}{3}]\). Then,
	one of the following configurations holds:
	\begin{enumerate}[label=(\Roman*)]
		\setcounter{enumi}{7}
		\item\label{config-I'} The vertices \((i, k)\), \((i + 1, k - 1)\) and \((i + 2, k - 1)\) have lists
		identical to \(\List{L}{1}\) for \(k = j + 2, j + 1\), the vertices
		\((i, k)\), \((i + 1, k)\) and \((i + 2, k)\)
		have lists identical to \(\List{L}{2}\) for \(k = j - 1, j - 2\).
		
		\item\label{config-II'} The vertices \((i, k)\), \((i + 1, k - 1)\) and \((i + 2, k - 2)\) have lists
		identical to \(\List{L}{1}\) for \(k = j + 2, j + 1\), the vertices
		\((i, k)\), \((i + 1, k)\) and \((i + 2, k - 1)\)
		have lists identical to \(\List{L}{2}\) for \(k = j, j - 1\).
		
		\item\label{config-III'} The vertices \((i, k)\), \((i + 1, k - 1)\) and \((i + 2, k - 1)\) have lists
		identical to \(\List{L}{1}\) for \(k = j + 2, j + 1\), the vertices
		\((i, k)\), \((i + 1, k)\) and \((i + 2, k - 1)\)
		have lists identical to \(\List{L}{2}\) for \(k = j - 1, j - 2\),
		and the vertex \((i + 2, j - 1)\) belongs to an isolated component of some list-class \(G[\List{L}{4}]\),
		where \(\List{L}{1} \neq \List{L}{4}\) and \(\List{L}{2} \neq \List{L}{4}\),
		but \(\List{L}{4}\) may be identical to \(\List{L}{3}\).
	\end{enumerate}
\end{lemma}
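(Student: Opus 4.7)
The plan is to exploit the rigid local structure forced by Lemma~\ref{LI:iso-config} around the isolated vertex $(i, j) \in G[\List{L}{3}]$, pinning down enough of column $i+1$ that the remaining case analysis reduces to an application of Lemma~\ref{L:without-iso} on a nonisolated pair in column $i+1$.

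I would first apply Lemma~\ref{LI:iso-config} to the isolated component $\Set{(i,j)}$ in $G[\List{L}{3}]$. Combined with the hypotheses $\List{L}{(i, j+1)} = \List{L}{1}$ and $\List{L}{(i, j-1)} = \List{L}{2}$, this yields $\List{L}{(i-1, j+1)} = \List{L}{(i+1, j+1)} = \List{L}{(i+1, j)} = \List{L}{1}$ and $\List{L}{(i-1, j)} = \List{L}{(i-1, j-1)} = \List{L}{(i+1, j-1)} = \List{L}{2}$. Next, the vertices $(i, j+1), (i, j-1), (i+1, j-1)$ each lie in a nonisolated component of their respective list-classes (witnessed, in turn, by $(i+1, j+1) \in G[\List{L}{1}]$, $(i+1, j-1) \in G[\List{L}{2}]$, and $(i, j-1) \in G[\List{L}{2}]$). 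For each of these three vertices, one of its two vertical neighbors---either $(i, j)$ or $(i+1, j)$---already carries a list distinct from its own, so Lemma~\ref{LI:noniso-config} forces the other vertical neighbor to share the list: $\List{L}{(i, j+2)} = \List{L}{1}$, $\List{L}{(i, j-2)} = \List{L}{2}$, and $\List{L}{(i+1, j-2)} = \List{L}{2}$.

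Now the vertices $(i+1, j+1), (i+1, j), (i+1, j-1), (i+1, j-2)$ have lists $\List{L}{1}, \List{L}{1}, \List{L}{2}, \List{L}{2}$, respectively. The adjacent pair $(i+1, j), (i+1, j-1)$ has distinct lists, and each is nonisolated (via $(i+1, j+1)$ and $(i+1, j-2)$), so I would then invoke Lemma~\ref{L:without-iso} on this pair with $(i', j') = (i+1, j-1)$, obtaining one of the configurations~\ref{config-I}--\ref{config-VII}. A direct reindexing produces the three patterns asserted: configurations~\ref{config-I},~\ref{config-II},~\ref{config-V} each give $\List{L}{(i+2, j+1)} = \List{L}{(i+2, j)} = \List{L}{1}$ and $\List{L}{(i+2, j-1)} = \List{L}{(i+2, j-2)} = \List{L}{2}$, namely Configuration~\ref{config-I'};~\ref{config-III},~\ref{config-IV},~\ref{config-VI} each give $\List{L}{(i+2, j)} = \List{L}{(i+2, j-1)} = \List{L}{1}$ and $\List{L}{(i+2, j-2)} = \List{L}{(i+2, j-3)} = \List{L}{2}$, namely Configuration~\ref{config-II'}; and~\ref{config-VII} yields Configuration~\ref{config-III'}, with the vertex $(i+2, j-1)$---the image of the isolated $(i'+1, j')$ from~\ref{config-VII}---belonging to an isolated component of some $G[\List{L}{4}]$ where $\List{L}{4} \notin \Set{\List{L}{1}, \List{L}{2}}$.

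The main obstacle is the bookkeeping in this final step: one must carefully reindex the seven configurations of Lemma~\ref{L:without-iso} under $(i', j') = (i+1, j-1)$, discard the extraneous information about column $i+3$ that the lemma also provides (and which is not part of Configurations~\ref{config-I'}--\ref{config-III'}), and verify that the seven cases indeed collapse cleanly into the three patterns on columns $i, i+1, i+2$ stated above. The condition $\List{L}{4} \neq \List{L}{1}, \List{L}{2}$ in Configuration~\ref{config-III'} is obtained by a further application of Lemma~\ref{LI:iso-config} to the isolated vertex $(i+2, j-1)$.
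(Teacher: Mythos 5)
Your proposal is correct, and its first half (using Lemma~\ref{LI:iso-config} on the isolated vertex $(i,j)$ and Lemma~\ref{LI:noniso-config} on $(i,j+1)$, $(i,j-1)$, $(i+1,j-1)$ to fix the lists on columns $C_{i}$ and $C_{i+1}$) coincides with the paper's argument. Where you diverge is in handling $C_{i+2}$: the paper extends directly, using criterion~\ref{criterion2.1} to pin down $\List{L}{(i+2,j)}$ and $\List{L}{(i+2,j-2)}$ and then Lemma~\ref{LI:noniso-config} (together with Lemma~\ref{LI:iso-config} in the isolated case) to split into exactly the three cases~\ref{config-I'}--\ref{config-III'}, i.e.\ it repeats the one-column case analysis from the proof of Lemma~\ref{L:without-iso} rather than citing that lemma. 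You instead observe that the pair $(i+1,j),(i+1,j-1)$ satisfies the hypotheses of Lemma~\ref{L:without-iso} --- a fact the paper also records, but only at the start of Section~\ref{S:Main} --- and apply that lemma as a black box one column to the right, after which its seven configurations collapse to the three claimed ones upon restriction to $C_{i+2}$ (configurations~\ref{config-I}, \ref{config-II}, \ref{config-V} giving~\ref{config-I'}; \ref{config-III}, \ref{config-IV}, \ref{config-VI} giving~\ref{config-II'}; and~\ref{config-VII} giving~\ref{config-III'}, all of which I have checked). Both routes are sound; yours avoids redoing the case analysis at the cost of the reindexing bookkeeping you flag and of carrying, then discarding, the information about $C_{i+3}$. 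Two small remarks: the condition $\List{L}{4}\neq\List{L}{1},\List{L}{2}$ in configuration~\ref{config-III'} already comes packaged with configuration~\ref{config-VII} of Lemma~\ref{L:without-iso}, so the ``further application'' of Lemma~\ref{LI:iso-config} you invoke at the end is unnecessary; and your reading of the $\List{L}{2}$-rows in configuration~\ref{config-II'} as $k=j-1,j-2$ (rather than the $k=j,j-1$ printed in the statement, which would contradict $\List{L}{(i,j)}=\List{L}{3}$ and $\List{L}{(i+1,j)}=\List{L}{1}$) is the intended one and agrees with what the paper's own proof derives.
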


\begin{figure}
\centering
\begingroup
  \renewcommand\thesubfigure{\Roman{subfigure}}
  
	\begin{subfigure}{0.3\textwidth}
	\centering
    \setcounter{subfigure}{7}
		\begin{tikzpicture}[font=\scriptsize]
			\draw (1,1) grid (3,6);
			\foreach \x in {1,2}
			\foreach \y in {2,...,6}
			\draw (\x,\y) -- (\x+1,\y-1);
			\draw[dotted]
			(1,0) -- (1,1) -- (2,0) -- (2,1) -- (3,0) -- (3,1)
			(1,6) -- (1,7) -- (2,6) -- (2,7) -- (3,6) -- (3,7);
			
			\draw[fill=myyellow] (1,1) circle(2.5pt) node[anchor=east] {};
			\draw[fill=myblue] (1,2) circle(2.5pt) node[anchor=east] {\(\List{L}{2}\)};
			\draw[fill=myblue] (1,3) circle(2.5pt) node[anchor=east] {\(\List{L}{2}\)};
			\draw[fill=mygreen] (1,4) circle(2.5pt) node[anchor=east] {\(\List{L}{3}\)};
			\draw[fill=myred] (1,5) circle(2.5pt) node[anchor=east] {\(\List{L}{1}\)};
			\draw[fill=myred] (1,6) circle(2.5pt) node[anchor=east] {\(\List{L}{1}\)};
			\draw[fill=myyellow] (2,1) circle(2.5pt) node[anchor=south west] {};
			\draw[fill=myblue] (2,2) circle(2.5pt) node[anchor=south west] {\(\List{L}{2}\)};
			\draw[fill=myblue] (2,3) circle(2.5pt) node[anchor=south west] {\(\List{L}{2}\)};
			\draw[fill=myred] (2,4) circle(2.5pt) node[anchor=south west] {\(\List{L}{1}\)};
			\draw[fill=myred] (2,5) circle(2.5pt) node[anchor=south west] {\(\List{L}{1}\)};
			\draw[fill=myyellow] (2,6) circle(2.5pt) node[anchor=south west] {};
			\draw[fill=myyellow] (3,1) circle(2.5pt) node[anchor=west] {};
			\draw[fill=myblue] (3,2) circle(2.5pt) node[anchor=west] {\(\List{L}{2}\)};
			\draw[fill=myblue] (3,3) circle(2.5pt) node[anchor=west] {\(\List{L}{2}\)};
			\draw[fill=myred] (3,4) circle(2.5pt) node[anchor=west] {\(\List{L}{1}\)};
			\draw[fill=myred] (3,5) circle(2.5pt) node[anchor=west] {\(\List{L}{1}\)};
			\draw[fill=myyellow] (3,6) circle(2.5pt) node[anchor=west] {};
			
			\node[anchor=south] at (1,7) {\(C_{i}\)};
			\node[anchor=south] at (2,7) {\(C_{i + 1}\)};
			\node[anchor=south] at (3,7) {\(C_{i + 2}\)};
		\end{tikzpicture}
		\caption{}
	\end{subfigure}
	\begin{subfigure}{0.3\textwidth}
	\centering
		\begin{tikzpicture}[font=\scriptsize]
			\draw (1,1) grid (3,6);
			\foreach \x in {1,2}
			\foreach \y in {2,...,6}
			\draw (\x,\y) -- (\x+1,\y-1);
			\draw[dotted]
			(1,0) -- (1,1) -- (2,0) -- (2,1) -- (3,0) -- (3,1)
			(1,6) -- (1,7) -- (2,6) -- (2,7) -- (3,6) -- (3,7);
			
			\draw[fill=myyellow] (1,1) circle(2.5pt) node[anchor=east] {};
			\draw[fill=myblue] (1,2) circle(2.5pt) node[anchor=east] {\(\List{L}{2}\)};
			\draw[fill=myblue] (1,3) circle(2.5pt) node[anchor=east] {\(\List{L}{2}\)};
			\draw[fill=mygreen] (1,4) circle(2.5pt) node[anchor=east] {\(\List{L}{3}\)};
			\draw[fill=myred] (1,5) circle(2.5pt) node[anchor=east] {\(\List{L}{1}\)};
			\draw[fill=myred] (1,6) circle(2.5pt) node[anchor=east] {\(\List{L}{1}\)};
			\draw[fill=myyellow] (2,1) circle(2.5pt) node[anchor=south west] {};
			\draw[fill=myblue] (2,2) circle(2.5pt) node[anchor=south west] {\(\List{L}{2}\)};
			\draw[fill=myblue] (2,3) circle(2.5pt) node[anchor=south west] {\(\List{L}{2}\)};
			\draw[fill=myred] (2,4) circle(2.5pt) node[anchor=south west] {\(\List{L}{1}\)};
			\draw[fill=myred] (2,5) circle(2.5pt) node[anchor=south west] {\(\List{L}{1}\)};
			\draw[fill=myyellow] (2,6) circle(2.5pt) node[anchor=south west] {};
			\draw[fill=myblue] (3,1) circle(2.5pt) node[anchor=west] {\(\List{L}{2}\)};
			\draw[fill=myblue] (3,2) circle(2.5pt) node[anchor=west] {\(\List{L}{2}\)};
			\draw[fill=myred] (3,3) circle(2.5pt) node[anchor=west] {\(\List{L}{1}\)};
			\draw[fill=myred] (3,4) circle(2.5pt) node[anchor=west] {\(\List{L}{1}\)};
			\draw[fill=myyellow] (3,5) circle(2.5pt) node[anchor=west] {};
			\draw[fill=myyellow] (3,6) circle(2.5pt) node[anchor=west] {};
			
			\node[anchor=south] at (1,7) {\(C_{i}\)};
			\node[anchor=south] at (2,7) {\(C_{i + 1}\)};
			\node[anchor=south] at (3,7) {\(C_{i + 2}\)};
		\end{tikzpicture}
		\caption{}
	\end{subfigure}
	\begin{subfigure}{0.3\textwidth}
	\centering
		\begin{tikzpicture}[font=\scriptsize]
			\draw (1,1) grid (3,6);
			\foreach \x in {1,2}
			\foreach \y in {2,...,6}
			\draw (\x,\y) -- (\x+1,\y-1);
			\draw[dotted]
			(1,0) -- (1,1) -- (2,0) -- (2,1) -- (3,0) -- (3,1)
			(1,6) -- (1,7) -- (2,6) -- (2,7) -- (3,6) -- (3,7);
			
			\draw[fill=myyellow] (1,1) circle(2.5pt) node[anchor=east] {};
			\draw[fill=myblue] (1,2) circle(2.5pt) node[anchor=east] {\(\List{L}{2}\)};
			\draw[fill=myblue] (1,3) circle(2.5pt) node[anchor=east] {\(\List{L}{2}\)};
			\draw[fill=mygreen] (1,4) circle(2.5pt) node[anchor=east] {\(\List{L}{3}\)};
			\draw[fill=myred] (1,5) circle(2.5pt) node[anchor=east] {\(\List{L}{1}\)};
			\draw[fill=myred] (1,6) circle(2.5pt) node[anchor=east] {\(\List{L}{1}\)};
			\draw[fill=myyellow] (2,1) circle(2.5pt) node[anchor=south west] {};
			\draw[fill=myblue] (2,2) circle(2.5pt) node[anchor=south west] {\(\List{L}{2}\)};
			\draw[fill=myblue] (2,3) circle(2.5pt) node[anchor=south west] {\(\List{L}{2}\)};
			\draw[fill=myred] (2,4) circle(2.5pt) node[anchor=south west] {\(\List{L}{1}\)};
			\draw[fill=myred] (2,5) circle(2.5pt) node[anchor=south west] {\(\List{L}{1}\)};
			\draw[fill=myyellow] (2,6) circle(2.5pt) node[anchor=south west] {};
			\draw[fill=myblue] (3,1) circle(2.5pt) node[anchor=west] {\(\List{L}{2}\)};
			\draw[fill=myblue] (3,2) circle(2.5pt) node[anchor=west] {\(\List{L}{2}\)};
			\draw[fill=mygreen] (3,3) circle(2.5pt) node[anchor=west] {\(\List{L}{4}\)};
			\draw[fill=myred] (3,4) circle(2.5pt) node[anchor=west] {\(\List{L}{1}\)};
			\draw[fill=myred] (3,5) circle(2.5pt) node[anchor=west] {\(\List{L}{1}\)};
			\draw[fill=myyellow] (3,6) circle(2.5pt) node[anchor=west] {};
			
			\node[anchor=south] at (1,7) {\(C_{i}\)};
			\node[anchor=south] at (2,7) {\(C_{i + 1}\)};
			\node[anchor=south] at (3,7) {\(C_{i + 2}\)};
		\end{tikzpicture}
		\caption{}
	\end{subfigure}
	\caption{Illustration of the configurations \ref{config-I'} through \ref{config-III'} of Lemma~\ref{L:with-iso}.}\label{F:iso-config}
\endgroup
\end{figure}
\renewcommand{\thesubfigure}{\alph{subfigure}}

\begin{proof}
	We start with \(\List{L}{(i, j + 1)} = L_{1}\), \(\List{L}{(i, j)} = L_{3}\) and
	\(\List{L}{(i, j - 1)} = L_{2}\), with \((i, j)\) belonging to an isolated
	component of the list-class \(G[L_{3}]\).
	By Lemma~\ref{LI:noniso-config}, we have \(\List{L}{(i, j + 2)} = L_{1}\)
	and \(\List{L}{(i, j - 2)} = L_{2}\). By Lemma~\ref{LI:iso-config}, we have
	\(\List{L}{(i + 1, j + 1)} = \List{L}{(i + 1, j)} = L_{1}\) and
	\(\List{L}{(i + 1, j - 1)} = L_{2}\). 
	By Lemma~\ref{LI:noniso-config}, we also have \(\List{L}{(i + 1, j - 2)} = L_{2}\).
	By criterion~\ref{criterion2.1}, this implies that
	\(\List{L}{(i + 2, j)} = L_{1}\) and \(\List{L}{(i + 2, j - 2)} = L_{2}\).
	Now, again by Lemma~\ref{LI:noniso-config}, we have the following three cases:
	\begin{itemize}
		\item \(\List{L}{(i + 2, j)} = L_{1}\) and \(\List{L}{(i + 2, j - 2)} = L_{2}\). This
		is configuration~\ref{config-I'}.
		
		\item \(\List{L}{(i + 2, j - 1)} = L_{1}\) and \(\List{L}{(i + 2, j - 3)} = L_{2}\). This
		is configuration~\ref{config-II'}.
		
		\item \(\List{L}{(i + 2, j + 1)} = L_{1}\), \(\List{L}{(i + 2, j - 3)} = L_{2}\) and
		\(\List{L}{(i + 2, j - 1)} = L_{4}\) where \(L_{1} \neq L_{4}\) and \(L_{2} \neq L_{4}\). In particular,
		by Lemma~\ref{LI:noniso-config}, \((i + 2, j - 1)\) must belong to an isolated component of the list-class \(G[L_{4}]\). This
		is configuration~\ref{config-III'}. Note that \(L_{4}\) may be identical to \(L_{3}\).
	\end{itemize}
\end{proof}

These configurations are listed in Figures~\ref{F:noniso-config1}--\ref{F:iso-config}.

We are now in a position to complete the proof of case~\ref{l1} in Theorem~\ref{T:Main}.

\section{Proof of \texorpdfstring{case~\ref{l1}}{case (1)} in \texorpdfstring{Theorem~\ref{T:Main}}{Theorem 1}}\label{S:Main}

By the results in Section~\ref{S:Preparation}, it suffices to assume that the list assignment
\(\ListAsgn{L}\) on \(G\) satisfies criteria~\ref{criterion2.1} to~\ref{criterion2.4},
and that, in particular, Lemma~\ref{L:with-iso} holds.
Suppose \((i, j + 1)\), \((i, j)\) and \((i, j - 1)\) are three vertices
in the column \(C_{i}\) that satisfy the hypotheses of Lemma~\ref{L:with-iso}.
Then, the vertices \((i + 1, j)\) and \((i + 1, j - 1)\) in the column \(C_{i + 1}\),
as well as the vertices \((i - 1, j + 1)\) and \((i - 1, j)\) in the column \(C_{i - 1}\),
satisfy the hypotheses of Lemma~\ref{L:without-iso}.
Thus, there always exists a column that has
a pair of adjacent vertices that satisfies the hypotheses of Lemma~\ref{L:without-iso},
which we shall now take to be \(C_{1}\) without loss of generality. Furthermore, without loss
of generality, let \((1, s)\) and \((1, s - 1)\) satisfy the hypotheses of Lemma~\ref{L:without-iso}.

Now, the first step of our algorithm to find an \(\ListAsgn{L}\)-coloring---which we elaborate
on below---is to properly color \(C_{1}\).
Then, the lists on \(C_{r}\) all reduce to \(3\)-lists, so \(C_{r}\) can be properly
colored by Lemma~\ref{L:N-algorithm}. This in turn causes the lists on \(C_{r - 1}\) to reduce
to \(3\)-lists. Thus, we can inductively color the columns from the right using Lemma~\ref{L:N-algorithm}
until we are only left to color the columns \(C_{2}\), \(C_{3}\) and \(C_{4}\).
Thus, it suffices to assume without loss of generality that \(r = 4\).

Fix \(1 \leq j \leq s\). We start with a few straightforward observations:
\begin{enumerate}[(O\arabic*)]
	\item\label{ob1} If \(\List{L}{(1, j)} = \List{L}{(2, j)} = \List{L}{(2, j - 1)}\),
	then any choice of color for \((1, j)\) will reduce the sizes of
	\(\List{L}{(2, j)}\) and \(\List{L}{(2, j - 1)}\) by \(1\) each.
	Similarly, if \(\List{L}{(1, j)} = \List{L}{(1, j - 1)} = \List{L}{(2, j - 1)}\),
	then any proper coloring of \((1, j)\) and \((1, j - 1)\)
	will reduce the size of \(\List{L}{(2, j - 1)}\) by \(2\).
	
	\item\label{ob2} Consider the vertices \((1, j)\), \((2, j)\), \((2, j - 1)\) and \((3, j - 1)\).
	Suppose that a color \(c \in \List{L}{(1, j)}\) has been chosen for \((1, j)\),
	so the sizes of \(\List{L}{(2, j)}\) and \(\List{L}{(2, j - 1)}\) have potentially reduced
	by \(1\) each. Now, if \(c \in \List{L}{(3, j - 1)}\) too,
	then coloring \((3, j - 1)\) with the color \(c\) does not reduce
	the sizes of the residual lists on \((2, j)\) and \((2, j - 1)\) any
	further.
	
	\item\label{ob3} Suppose that \(\List{L}{(1, j)} \cap \List{L}{(3, j - 1)} = \emptyset\).
	If \((1, j)\) is an isolated vertex, then \((1, j + 1)\), \((1, j)\), and \((1, j - 1)\)
	satisfy the hypotheses of Lemma~\ref{L:with-iso}, and moreover these vertices must be in
	configuration~\ref{config-III'}. If \((1, j)\) is not an isolated vertex, then
	either \((1, j)\) and \((1, j - 1)\) satisfy the hypotheses of Lemma~\ref{L:without-iso},
	or \((1, j + 1)\) and \((1, j)\) satisfy the hypotheses of Lemma~\ref{L:without-iso},
	or \(\List{L}{(1, j + 1)} = \List{L}{(1, j)} = \List{L}{(1, j - 1)}\).
	If the first case holds, then \((1, j)\) and \((1, j - 1)\) must be in
	configuration~\ref{config-I}; if the second case holds, then \((1, j + 1)\) and \((1, j)\) must be in configuration~\ref{config-IV};
	the third case is impossible, since by repeated application of criterion~\ref{criterion2.1} we must have
	\(\List{L}{(1, j)} = \List{L}{(3, j - 1)}\).
	
	Furthermore, in the case when \((1, j)\) is an isolated vertex,
	choosing a color for \((1, j)\) from \(\List{L}{(1, j)} \setminus \List{L}{(2, j)}\)
	and for \((3, j - 1)\) from \(\List{L}{(3, j - 1)} \setminus \List{L}{(2, j - 1)}\)
	will reduce the sizes of \(\List{L}{(2, j)}\) and \(\List{L}{(2, j - 1)}\) by \(1\) each.
	Note that such choices are possible by criterion~\ref{criterion2.1} and Lemma~\ref{LI:iso-config}.
	In the other cases, any choice of color for \((1, j)\) and for \((3, j - 1)\)
	will reduce the sizes of \(\List{L}{(2, j)}\) and \(\List{L}{(2, j - 1)}\) only by \(1\) each.
	
\end{enumerate}

These observations are crucial for step~\ref{step1} of the following two-step coloring algorithm:
\begin{enumerate}[label=\arabic*.,ref=\arabic*]
	\item\label{step1} Properly color \(C_{1}\) and a set \(J\) of alternate vertices
	in \(C_{3}\) such that the reduced list sizes on \(C_{2}\) are as follows:
	one vertex in \(C_{2}\) has a \(4\)-list and every other vertex in \(C_{2}\) has a \(3\)-list.
	
	\item\label{step2} Properly color \(C_{4}\), then the remaining vertices in \(C_{3}\), and finally \(C_{2}\).
\end{enumerate}

Assume for the moment that step~\ref{step1} has been completed.
Then, step~\ref{step2} can be completed by repeatedly invoking
Lemma~\ref{L:cycle-choosability} as follows.

As we shall see when we elaborate on step~\ref{step1},
we may assume that the \(4\)-list in the column \(C_{2}\) is on the vertex \((2, s - 1)\),
and that the rest of the vertices in \(C_{2}\) have \(3\)-lists.
Also, the set \(J\) will turn out to be either \(I \defn \Set{ (3, s - 2k + 2) : k = 1,\dotsc,\floor{s/2} }\)
or \(I' \defn \Set{ (3, s - 2k + 1) : k = 1,\dotsc,\floor{s/2} }\).

Now, the sizes of the lists on the remaining vertices of \(C_{3}\) after the completion
of step~\ref{step1} are as follows:
the vertices of \(C_{3}\)
that remain to be colored all have \(3\)-lists; moreover, when \(s\) is odd,
the vertices \((3, 1)\) and \((3, 2)\) each have a \(4\)-list when the set \(I\)
is colored in step~\ref{step1}, and the vertices \((3, 1)\) and \((3, s)\) each have a \(4\)-list when
the set \(I'\) is colored in step~\ref{step1}.

Next, the sizes of the lists on the column \(C_{4}\) are as follows:
each vertex in \(C_{4}\) has a \(2\)-list; moreover, when \(s\) is odd,
the vertex \((4, 1)\) has \(3\)-list when the set \(I\) is colored in step~\ref{step1},
and the vertex \((4, s)\) has a \(3\)-list when the set \(I'\) is colored in step~\ref{step1}.

Thus, regardless of the parity of \(s\),
properly color the column \(C_{4}\) using Lemma~\ref{L:cycle-choosability}.
This reduces the sizes of each of the remaining lists on \(C_{3}\)
by \(2\). Again by Lemma~\ref{L:cycle-choosability}, regardless of the parity of \(s\),
properly color the remaining vertices in the column \(C_{3}\).
This reduces the list sizes on \(C_{2}\) as follows. When \(s\) is even, each list
on \(C_{2}\) is reduced in size by \(1\). When \(s\) is odd,
each list is reduced in size by \(1\), but for the following exception:
if \(I\) is colored in step~\ref{step1}, then the list on \((2, 2)\) is reduced in size by \(2\),
and if \(I'\) is colored in step~\ref{step1}, then the list on \((2, 1)\) is reduced in size by \(2\).
In either case, properly color \(C_{2}\) using Lemma~\ref{L:cycle-choosability}.
This completes step~\ref{step2}.

Figures~\ref{F:example1} and \ref{F:example2} illustrate the sizes of the lists in step~\ref{step2}
when \(s\) is even and odd, respectively, assuming that the set \(I\) is colored
in step~\ref{step1}. The edges between the top and bottom rows are not shown in these figures.

\begin{figure}
\centering
	\begin{subfigure}{0.22\textwidth}
	\centering
		\begin{tikzpicture}[font=\scriptsize]
			\draw (1,1) grid (3,6);
			\foreach \y in {2,...,6}
			\foreach \x in {1,2}
			\draw (\x,\y) -- (\x+1,\y-1);
			\foreach \x in {1,2,3}
			\foreach \y in {1,...,6}
			\draw[fill=black] (\x,\y) circle(2.5pt);
			\foreach \y in {1,2,3,4,6}{
				\node[anchor=east] at (1,\y) {3};
			}
			\foreach \y in {1,...,6}{
				\node[anchor=west] at (3,\y) {3};
			}
			\node[anchor=east] at (1,5) {4};
			
			\node[anchor=north] at (2,1) {5};
			\node[anchor=south] at (2,6) {5};
			\foreach \y in {2,...,5}
			\node[anchor=south west] at (2,\y) {5};
		\end{tikzpicture}
		\caption{}
	\end{subfigure}
	\begin{subfigure}{0.22\textwidth}
	\centering
		\begin{tikzpicture}[font=\scriptsize]
			\draw
			(1,1) -- (1,2) -- (1,3) -- (1,4) -- (1,5) -- (1,6)
			(3,1) -- (3,2) -- (3,3) -- (3,4) -- (3,5) -- (3,6)
			(1,6) -- (2,5) -- (3,5)
			(1,5) -- (2,5) -- (3,4)
			(1,4) -- (2,3) -- (3,3)
			(1,3) -- (2,3) -- (3,2)
			(1,2) -- (2,1) -- (3,1)
			(1,1) -- (2,1);
			\draw[dotted, line width=1pt]
			(2,1) -- (2,2) -- (2,3) -- (2,4) -- (2,5) -- (2,6)
			(2,6) -- (3,6)
			(1,6) -- (2,6) -- (3,5)
			(1,5) -- (2,4) -- (3,4)
			(1,4) -- (2,4) -- (3,3)
			(1,3) -- (2,2) -- (3,2)
			(1,2) -- (2,2) -- (3,1);
			\foreach \x in {1,2,3}
			\foreach \y in {1,...,6}
			\draw[fill=black] (\x,\y) circle(2.5pt);
			\foreach \y in {1,2,3,4,6}{
				\node[anchor=east] at (1,\y) {3};
			}
			\foreach \y in {1,...,6}{
				\node[anchor=west] at (3,\y) {2};
			}
			\node[anchor=east] at (1,5) {4};
			
			\node[anchor=north] at (2,1) {3};
			\foreach \y in {3,5}
			\node[anchor=south west] at (2,\y) {3};
		\end{tikzpicture}
		\caption{}
	\end{subfigure}
	\begin{subfigure}{0.22\textwidth}
	\centering
		\begin{tikzpicture}[font=\scriptsize]
			\draw
			(1,1) -- (1,2) -- (1,3) -- (1,4) -- (1,5) -- (1,6)
			(1,6) -- (2,5)
			(1,5) -- (2,5)
			(1,4) -- (2,3)
			(1,3) -- (2,3)
			(1,2) -- (2,1)
			(1,1) -- (2,1);
			\draw[dotted, line width=1pt]
			(3,1) -- (3,2) -- (3,3) -- (3,4) -- (3,5) -- (3,6)
			(3,4) -- (2,5) -- (3,5)
			(3,2) -- (2,3) -- (3,3)
			(2,1) -- (3,1);
			\foreach \x in {1,2,3}
			\foreach \y in {1,...,6}
			\draw[fill=black] (\x,\y) circle(2.5pt);
			\foreach \y in {1,2,3,4,6}{
				\node[anchor=east] at (1,\y) {3};
			}
			\node[anchor=east] at (1,5) {4};
			
			\node[anchor=north] at (2,1) {1};
			\foreach \y in {3,5}
			\node[anchor=south west] at (2,\y) {1};
		\end{tikzpicture}
		\caption{}
	\end{subfigure}
	\begin{subfigure}{0.22\textwidth}
	\centering
		\begin{tikzpicture}[font=\scriptsize]
			\draw
			(1,1) -- (1,2) -- (1,3) -- (1,4) -- (1,5) -- (1,6);
			\draw[dotted, line width=1pt]
			(1,6) -- (2,5)
			(1,5) -- (2,5)
			(1,4) -- (2,3)
			(1,3) -- (2,3)
			(1,2) -- (2,1)
			(1,1) -- (2,1);
			\foreach \x in {1,2,3}
			\foreach \y in {1,...,6}
			\draw[fill=black] (\x,\y) circle(2.5pt);
			\foreach \y in {1,2,3,4,6}{
				\node[anchor=east] at (1,\y) {2};
			}
			\node[anchor=east] at (1,5) {3};
			\node[anchor=north] at (2,1) {\vphantom{5}};
		\end{tikzpicture}
		\caption{}
	\end{subfigure}
	\caption{Illustration of the sizes of the lists on the columns
		\(C_{2}\), \(C_{3}\) and \(C_{4}\) in step~\ref{step2} when \(s = 6\)
		and \(I\) is colored in step~\ref{step1}.}\label{F:example1}
\end{figure}

\begin{figure}
\centering
	\begin{subfigure}{0.22\textwidth}
	\centering
		\begin{tikzpicture}[font=\scriptsize]
			\draw (1,0) grid (3,6);
			\foreach \y in {1,...,6}
			\foreach \x in {1,2}
			\draw (\x,\y) -- (\x+1,\y-1);
			\foreach \x in {1,2,3}
			\foreach \y in {0,...,6}
			\draw[fill=black] (\x,\y) circle(2.5pt);
			\foreach \y in {0,1,2,3,4,6}{
				\node[anchor=east] at (1,\y) {3};
			}
			\foreach \y in {0,...,6}{
				\node[anchor=west] at (3,\y) {3};
			}
			\node[anchor=east] at (1,5) {4};
			
			\node[anchor=north] at (2,0) {5};
			\node[anchor=south] at (2,6) {5};
			\foreach \y in {1,...,5}
			\node[anchor=south west] at (2,\y) {5};
		\end{tikzpicture}
		\caption{}
	\end{subfigure}
	\begin{subfigure}{0.22\textwidth}
	\centering
		\begin{tikzpicture}[font=\scriptsize]
			\draw
			(1,0) -- (1,1) -- (1,2) -- (1,3) -- (1,4) -- (1,5) -- (1,6)
			(3,0) -- (3,1) -- (3,2) -- (3,3) -- (3,4) -- (3,5) -- (3,6)
			(1,6) -- (2,5) -- (3,5)
			(1,5) -- (2,5) -- (3,4)
			(1,4) -- (2,3) -- (3,3)
			(1,3) -- (2,3) -- (3,2)
			(1,2) -- (2,1) -- (3,1)
			(1,1) -- (2,1) -- (3,0)
			(1,1) -- (2,0) -- (3,0)
			(1,0) -- (2,0) -- (2,1);
			\draw[dotted, line width=1pt]
			(2,1) -- (2,2) -- (2,3) -- (2,4) -- (2,5) -- (2,6)
			(2,6) -- (3,6)
			(1,6) -- (2,6) -- (3,5)
			(1,5) -- (2,4) -- (3,4)
			(1,4) -- (2,4) -- (3,3)
			(1,3) -- (2,2) -- (3,2)
			(1,2) -- (2,2) -- (3,1);
			\foreach \x in {1,2,3}
			\foreach \y in {0,...,6}
			\draw[fill=black] (\x,\y) circle(2.5pt);
			\foreach \y in {0,1,2,3,4,6}{
				\node[anchor=east] at (1,\y) {3};
			}
			\foreach \y in {1,...,6}{
				\node[anchor=west] at (3,\y) {2};
			}
			\node[anchor=east] at (1,5) {4};
			
			\node[anchor=west] at (3,0) {3};		
			\node[anchor=south west] at (2,1) {4};	
			\node[anchor=north] at (2,0) {4};
			\foreach \y in {3,5}
			\node[anchor=south west] at (2,\y) {3};
		\end{tikzpicture}
		\caption{}
	\end{subfigure}
	\begin{subfigure}{0.22\textwidth}
	\centering
		\begin{tikzpicture}[font=\scriptsize]
			\draw
			(1,0) -- (1,1) -- (1,2) -- (1,3) -- (1,4) -- (1,5) -- (1,6)
			(1,6) -- (2,5)
			(1,5) -- (2,5)
			(1,4) -- (2,3)
			(1,3) -- (2,3)
			(1,2) -- (2,1)
			(1,1) -- (2,1)
			(1,0) -- (2,0) -- (1,1)
			(2,0) -- (2,1);
			\draw[dotted, line width=1pt]
			(3,0) -- (3,1) -- (3,2) -- (3,3) -- (3,4) -- (3,5) -- (3,6)
			(3,4) -- (2,5) -- (3,5)
			(3,2) -- (2,3) -- (3,3)
			(2,0) -- (3,0) -- (2,1) -- (3,1);
			\foreach \x in {1,2,3}
			\foreach \y in {0,...,6}
			\draw[fill=black] (\x,\y) circle(2.5pt);
			\foreach \y in {0,1,2,3,4,6}{
				\node[anchor=east] at (1,\y) {3};
			}
			\node[anchor=east] at (1,5) {4};
			
			\node[anchor=north] at (2,0) {2};
			\node[anchor=south west] at (2,1) {2};
			\foreach \y in {3,5}
			\node[anchor=south west] at (2,\y) {1};
		\end{tikzpicture}
		\caption{}
	\end{subfigure}
	\begin{subfigure}{0.22\textwidth}
	\centering
		\begin{tikzpicture}[font=\scriptsize]
			\draw
			(1,0) -- (1,1) -- (1,2) -- (1,3) -- (1,4) -- (1,5) -- (1,6);
			\draw[dotted, line width=1pt]
			(1,6) -- (2,5)
			(1,5) -- (2,5)
			(1,4) -- (2,3)
			(1,3) -- (2,3)
			(1,2) -- (2,1)
			(1,1) -- (2,1)
			(1,1) -- (2,1)
			(1,0) -- (2,0) -- (1,1)
			(2,0) -- (2,1);
			\foreach \x in {1,2,3}
			\foreach \y in {0,...,6}
			\draw[fill=black] (\x,\y) circle(2.5pt);
			\foreach \y in {0,2,3,4,6}{
				\node[anchor=east] at (1,\y) {2};
			}
			\node[anchor=east] at (1,1) {1};
			\node[anchor=east] at (1,5) {3};
			\node[anchor=north] at (2,0) {\vphantom{5}};
		\end{tikzpicture}
		\caption{}
	\end{subfigure}
	\caption{Illustration of the sizes of the lists on the columns
		\(C_{2}\), \(C_{3}\) and \(C_{4}\) in step~\ref{step2} when \(s = 7\)
		and \(I\) is colored in step~\ref{step1}.}\label{F:example2}
\end{figure}

We now describe step~\ref{step1}.
If \((1, s)\) and \((1, s - 1)\)
are in any configuration other than \ref{config-IV} and~\ref{config-VI},
then take \(J = I\), and if \((1, s)\) and \((1, s - 1)\) are in configuration~\ref{config-IV} or~\ref{config-VI},
then take \(J = I'\), where the sets \(I\) and \(I'\) are as defined earlier in the description of step~\ref{step2}.
From observations~\ref{ob1} to~\ref{ob3}, every vertex in \(C_{2}\) can have a \(3\)-list
at the end of step~\ref{step1} if for every \((3, j) \in J\), either
\(\List{L}{(1, j + 1)} \cap \List{L}{(3, j)} = \emptyset\),
or \((1, j + 1)\) and \((3, j)\) are assigned the same color.
Clearly, if the lists on \((1, j + 1)\) and \((3, j)\) are identical, then for
any assignment of a color on \((1, j + 1)\) we can pick the same color
for \((3, j)\). On the other hand, if the lists on \((1, j + 1)\) and \((3, j)\) are distinct
but not disjoint, then we need to ensure that the color assigned on \((1, j + 1)\)
belongs to \(\List{L}{(1, j + 1)} \cap \List{L}{(3, j)}\).

So, call the pair of vertices \((1, j + 1), (3, j)\) to be a \defining{good pair}
if either \(\List{L}{(1, j + 1)} = \List{L}{(3, j)}\), or
\(\List{L}{(1, j + 1)} \cap \List{L}{(3, j)} = \emptyset\) and
\((1, j + 2)\), \((1, j + 1)\), and \((1, j)\) are not in configuration~\ref{config-III'}.
Define \(A\) to be the set of all pairs \((1, j + 1), (3, j)\)
that are not good pairs.
We now carry out step~\ref{step1} in three stages.
In the first stage, we shall color the vertices in \(A\).
In the second stage, we color the vertices \((1, s)\)
and \((1, s - 1)\) in such a way that the list on \((2, s - 1)\) reduces to a \(4\)-list.
Finally, we color the remaining vertices
of the column \(C_{1}\), followed by the remaining vertices in \(J\).

Now, for the first stage. Suppose \((3, j) \in A\). If the lists on \((1, j + 1)\)
and \((3, j)\) are distinct but not disjoint, then choose a common color for \((1, j + 1)\)
and \((3, j)\) from \(\List{L}{(1, j + 1)} \cap \List{L}{(3, j)}\).
Otherwise, we have that the lists on
\((1, j + 1)\) and \((3, j)\) are disjoint and 
the vertices \((1, j + 2)\), \((1, j + 1)\) and \((1, j)\) are in configuration~\ref{config-III'}.
In this case, choose a color for \((1, j + 1)\) from \(\List{L}{(1, j + 1)} \setminus \List{L}{(2, j + 1)}\)
and for \((3, j)\) from \(\List{L}{(3, j)} \setminus \List{L}{(2, j)}\).

Note that our choice of \(J\) ensures that the vertices \((1, 1)\), \((1, s)\), \((1, s - 1)\)
and \((1, s - 2)\) are not colored in the first stage above (cf.~Figure~\ref{F:noniso-config1} and \ref{F:noniso-config2}).
So, for the second stage, pick colors for \((1, s)\) and \((1, s - 1)\) as follows:
\begin{itemize}
	\item If \((1, s)\) and \((1, s - 1)\) are in any of the configurations~\ref{config-I} to~\ref{config-IV},
	then choose a color for \((1, s)\)
	from \(\List{L}{(1, s)} \setminus \List{L}{(1, s - 1)}\)
	and for \((1, s - 1)\) from \(\List{L}{(1, s - 1)} \setminus \List{L}{(1, s)}\).
	
	\item If \((1, s)\) and \((1, s - 1)\) are in configuration~\ref{config-V}, choose a color for \((1, s)\) from
	\(\List{L}{(1, s)} \cap \List{L}{(3, s - 1)} \setminus \List{L}{(1, s - 1)}\)
	(this can be done because of criterion~\ref{criterion2.1} and Lemma~\ref{LI:iso-config}), and choose a color
	for \((1, s - 1)\) from \(\List{L}{(1, s - 1)} \setminus \List{L}{(1, s)}\).
	
	\item If \((1, s)\) and \((1, s - 1)\) are in configuration~\ref{config-VI}, choose a color
	for \((1, s)\) from \(\List{L}{(1, s)} \setminus \List{L}{(1, s - 1)}\),
	and for \((1, s - 1)\) from \(\List{L}{(1, s - 1)} \cap \List{L}{(3, s - 2)} \setminus \List{L}{(1, s)}\)
	(this can be done because of criterion~\ref{criterion2.1} and Lemma~\ref{LI:iso-config}).
	
	\item If \((1, s)\) and \((1, s - 1)\) are in configuration~\ref{config-VII}, choose a color for \((1, s)\)
	from \(\List{L}{(1, s)} \setminus \List{L}{(2, s - 1)}\), and for
	\((1, s - 1)\) from \(\List{L}{(1, s - 1)} \setminus \List{L}{(1, s)}\).
\end{itemize}
This coloring ensures that the vertex \((2, s - 1)\) now has a \(4\)-list.

Finally, for the third stage. Color the remaining vertices in the column \(C_{1}\) using Lemma~\ref{L:cycle-choosability}.
Then, color the remaining vertices in \(J\) as follows.
Suppose \((3, j) \in J\) was uncolored in the first stage.
If the lists on \((3, j)\) and \((1, j + 1)\) assigned by \(\ListAsgn{L}\)
were identical, choose the same color on \((3, j)\)
as that assigned on \((1, j + 1)\). If the lists on \((3, j)\) and \((1, j + 1)\) are disjoint,
then choose any color for \((3, j)\) from its list.

Notice that at the end of this procedure
the vertex \((2, s - 1)\) still has a \(4\)-list, and that all the other vertices in the column
\(C_{2}\) have \(3\)-lists. So, this completes step~\ref{step1}.
Combined with step~\ref{step2}, this completes the proof.

It is also clear from the above description that the coloring can be found in linear time.

\section{Proofs of \texorpdfstring{cases~\ref{l2} and~\ref{l3}}{cases (2) and (3)} in \texorpdfstring{Theorem~\ref{T:Main}}{Theorem 1}}\label{S:Sub}

\begin{proof}[Proof of case~\ref{l2} in Theorem~\ref{T:Main}]
	Let \(G = T(1, s, 2)\) for \(s \geq 9\), \(s \neq 11\). Then, every four successive vertices
	\((1, j)\), \((1, j + 1)\), \((1, j + 2)\), \((1, j + 3)\) induce a \(K_{4}\).
	Suppose that \(\ListAsgn{L}\) is a list assignment on \(G\)
	with lists of size equal to \(5\). Since \(G\) is \(5\)-colorable
	in linear time by the results in~\cite{CollinsHutchinson1999,Sankarnarayanan2022},
	it suffices to assume that not all the lists assigned by \(\ListAsgn{L}\) are identical.
	Without loss of generality, suppose that \(\List{L}{(1, 1)} \neq \List{L}{(1, s)}\).
	Choose a color for \((1, s)\) from \(\List{L}{(1, s)} \setminus \List{L}{(1, 1)}\).
	Next, one can properly color the vertices \((1, s - 1), (1, s - 2), \dotsc, (1, 7)\)
	in that order by successively picking a color for each vertex from its (reduced) list.
	Then, the lists on the remaining vertices are as follows: \((1, 6)\) has
	a \(2\)-list; \((1, 1)\), \((1, 2)\) and \((1, 5)\) have \(3\)-lists;
	\((1, 3)\) and \((1, 4)\) have \(4\)-lists. There are two special cases
	that can be easily dealt with.
	
	\begin{itemize}[leftmargin=*,widest=Case II:]
		\item[Case I:] \(\List{L}{(1, 2)} \cap \List{L}{(1, 6)} \neq \emptyset\).
		
		Choose a common color for \((1, 2)\) and \((1, 6)\) from \(\List{L}{(1, 2)} \cap \List{L}{(1, 6)}\).
		Then, \((1, 1)\) and \((1, 5)\) have \(2\)-lists, and
		\((1, 3)\) and \((1, 4)\) have \(3\)-lists. If we can pick a color for \((1, 1)\)
		that does not belong to both \(\List{L}{(1, 3)}\) and \(\List{L}{(1, 4)}\), then
		we will be done by Lemma~\ref{L:cycle-choosability}, so assume that
		\(\List{L}{(1, 1)} \subset \List{L}{(1, 3)} \cap \List{L}{(1, 4)}\).
		Then, for any choice of color for \((1, 1)\), the remaining \(3\)-cycle
		will have \(2\)-lists, so it will have a proper coloring only when the \(2\)-lists
		are not identical, by Lemma~\ref{L:cycle-choosability}. But, if picking \(a \in \List{L}{(1, 1)}\) results
		in identical \(2\)-lists being present on the remaining \(3\)-cycle, then
		we instead pick the other color \(a' \in \List{L}{(1, 1)} \setminus \Set{a}\)
		for \((1, 1)\) to get non-identical \(2\)-lists on the remaining \(3\)-cycle.
	\end{itemize}
	
	So, it suffices to assume that \(\List{L}{(1, 2)} \cap \List{L}{(1, 6)} = \emptyset\).
	
	\begin{itemize}[leftmargin=*,widest=Case II:]
		\item[Case II:]	\(\List{L}{(1, 1)} \cap \List{L}{(1, 5)} \neq \emptyset\).
		
		Choose a common color \(c\) for \((1, 1)\) and \((1, 5)\) from \(\List{L}{(1, 1)} \cap \List{L}{(1, 5)}\).
		If \(c \not\in \List{L}{(1, 2)}\), then the remaining vertices form a \(K_{4}^{-}\)
		with three \(3\)-lists and one \(1\)-list, and one can see that a proper coloring
		can always be found from this configuration of lists. If \(c \in \List{L}{(1, 2)}\),
		then we have a \(K_{4}^{-}\) with three \(2\)-lists and one \(3\)-list. Since
		\(\List{L}{(1, 2)} \cap \List{L}{(1, 6)} = \emptyset\) by assumption, we can choose
		a color for either \((1, 2)\) or \((1, 6)\) that does not belong to \(\List{L}{(1, 3)}\).
		Then, we can properly color the rest of the vertices using Lemma~\ref{L:cycle-choosability}.
	\end{itemize}
	
	So, we additionally assume that \(\List{L}{(1, 1)} \cap \List{L}{(1, 5)} = \emptyset\).
	
	Now, choose a color for \((1, 3)\) from \(\List{L}{(1, 3)} \setminus \List{L}{(1, 6)}\).
	Then, the lists are now as follows: \((1, 4)\) has a \(3\)-list; \((1, 2)\) and \((1, 6)\)
	have \(2\)-lists; lastly, either \((1, 1)\) has a \(2\)-list and \((1, 5)\) has a \(3\)-list,
	or vice-versa, since the color chosen for \((1, 3)\) can belong to at most one of \(\List{L}{(1, 1)}\)
	and \(\List{L}{(1, 5)}\). In either case, a color can be chosen for \((1, 4)\) such that
	both \((1, 1)\) and \((1, 5)\) end up with \(2\)-lists. The lists on \((1, 2)\) and \((1, 6)\)
	are now a \(1\)-list and a \(2\)-list, not necessarily in that order, since the color
	chosen for \((1, 4)\) can belong to at most one of \(\List{L}{(1, 2)}\) and \(\List{L}{(1, 6)}\).
	In any case, the remaining four vertices form a path graph with three \(2\)-lists and one
	\(1\)-list, so a proper coloring can be found using Lemma~\ref{L:cycle-choosability}.
	
	This completes the proof. It is also clear that the list coloring
	can be found in linear time.
\end{proof}

The following lemma will be repeatedly invoked in the proof of case~\ref{l3} in Theorem~\ref{T:Main}.

\begin{lemma}\label{L:small}
	Let \(G = K_{4}^{-}\) be the complete graph on four vertices with an edge removed,
	where \(V(G) = \Set{a, b, x, y}\) and \(\Set{x, y}\) is an independent set.
	Suppose that \(\ListAsgn{L}\) is a list assignment on \(G\) such that
	\(\card{\List{L}{a}} + \card{\List{L}{b}} = \card{\List{L}{x}} + \card{\List{L}{y}}\).
	Then, one can choose colors for \(x\) and \(y\) such that the sizes of the lists on \(a\) and \(b\)
	reduce by \(1\) each.
\end{lemma}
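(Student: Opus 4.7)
The proof splits into two cases depending on whether $\List{L}{x}$ and $\List{L}{y}$ share a color. In the easy case $\List{L}{x} \cap \List{L}{y} \neq \emptyset$, I would set $c_{x} = c_{y}$ to be any common color; since $x$ and $y$ are non-adjacent in $K_{4}^{-}$, this is a valid partial coloring, and as $\Set{c_{x}, c_{y}}$ consists of a single color, each of $\List{L}{a}$ and $\List{L}{b}$ loses at most one color.

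In the case $\List{L}{x} \cap \List{L}{y} = \emptyset$, the goal is to find $(c_{x}, c_{y}) \in \List{L}{x} \times \List{L}{y}$ with $\Set{c_{x}, c_{y}} \not\subseteq \List{L}{a}$ and $\Set{c_{x}, c_{y}} \not\subseteq \List{L}{b}$. I would argue by contradiction. Assuming every pair is forbidden, any $c_{x}$ outside $\List{L}{a} \cup \List{L}{b}$ would succeed with any $c_{y}$, so $\List{L}{x}, \List{L}{y} \subseteq \List{L}{a} \cup \List{L}{b}$. The chain
\[ \card{\List{L}{x}} + \card{\List{L}{y}} = \card{\List{L}{x} \cup \List{L}{y}} \leq \card{\List{L}{a} \cup \List{L}{b}} \leq \card{\List{L}{a}} + \card{\List{L}{b}} \]
must then be an equality throughout by the size hypothesis, yielding $\List{L}{a} \cap \List{L}{b} = \emptyset$ and $\List{L}{x} \cup \List{L}{y} = \List{L}{a} \cup \List{L}{b}$.

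With $\List{L}{a}$ and $\List{L}{b}$ disjoint, a pair $(c_{x}, c_{y})$ is forbidden only if $c_{x}, c_{y}$ both lie in $\List{L}{a}$ or both lie in $\List{L}{b}$. Hence if $\List{L}{x}$ meets $\List{L}{a}$, then $\List{L}{y} \subseteq \List{L}{a}$, and similarly for $\List{L}{b}$; if both occur, then $\List{L}{y} \subseteq \List{L}{a} \cap \List{L}{b} = \emptyset$, a contradiction. So $\List{L}{x}$ lies entirely in one of $\List{L}{a}, \List{L}{b}$, say $\List{L}{a}$; then $\List{L}{y} \subseteq \List{L}{a}$ too, and $\List{L}{b} \subseteq \List{L}{x} \cup \List{L}{y} \subseteq \List{L}{a}$ forces $\List{L}{b} = \emptyset$, contradicting the implicit nonemptyness of the list on $b$. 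The main obstacle is really just packaging this contradiction cleanly; the key step is squeezing the size hypothesis between the two natural inclusion-based size inequalities.
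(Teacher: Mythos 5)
Your proof is correct and follows essentially the same route as the paper's: the same trichotomy (a common color for \(x\) and \(y\); a color for \(x\) outside \(\List{L}{a} \cup \List{L}{b}\); and the remaining case where the size hypothesis squeezed between \(\card{\List{L}{x} \cup \List{L}{y}} \leq \card{\List{L}{a} \cup \List{L}{b}} \leq \card{\List{L}{a}} + \card{\List{L}{b}}\) forces \(\List{L}{a} \cap \List{L}{b} = \emptyset\) and then the impossibility of all colors landing in a single one of \(\List{L}{a}, \List{L}{b}\)). The only cosmetic difference is that you package the last case as a contradiction where the paper directly exhibits the pair \((c_{x}, c_{y})\) with the two colors in different lists; both arguments rely on the same implicit nonemptiness of the lists, which holds in every application.
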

\begin{proof}
	If \(\List{L}{x} \cap \List{L}{y} \neq \emptyset\), then choose a common color
	for \(x\) and \(y\) from \(\List{L}{x} \cap \List{L}{y}\). Clearly, this reduces
	the sizes of the lists on \(a\) and \(b\) by \(1\) each.
	
	So, suppose \(\List{L}{x} \cap \List{L}{y} = \emptyset\). If \(\List{L}{x} \not\subset \List{L}{a} \cup \List{L}{b}\),
	then we can choose a color for \(x\) from \(\List{L}{x} \setminus (\List{L}{a} \cup \List{L}{b})\),
	and any color for \(y\), to reduce the sizes of the lists on \(a\) and \(b\)
	by \(1\) each. Similarly, when \(\List{L}{y} \not\subset \List{L}{a} \cup \List{L}{b}\).
	
	So, suppose that \(\List{L}{x}, \List{L}{y} \subset \List{L}{a} \cup \List{L}{b}\).
	But then there are \(k\) distinct
	available colors for \(x\) and \(y\) together, where
	\(k = \card{\List{L}{x}} + \card{\List{L}{y}}\),
	as well as for \(a\) and \(b\) together. Thus, any color in \(\List{L}{x}\) can belong to at most one of
	\(\List{L}{a}\) and \(\List{L}{b}\), and similarly for any color in \(\List{L}{y}\).
	Moreover, it is not possible that
	all of the \(k\) available colors on \(x\) and \(y\) belong
	to a single list (say, \(\List{L}{a}\)), because the other list
	(\(\List{L}{b}\)) will then be empty, a contradiction. Therefore,
	it is possible to choose colors for \(x\) and \(y\) from their respective lists
	such that each color belongs to a different list between \(\List{L}{a}\) and \(\List{L}{b}\).	
	This reduces the sizes of the lists on \(a\) and \(b\) by \(1\) each.
\end{proof}

\begin{proof}[Proof of case~\ref{l3} in Theorem~\ref{T:Main}]
	Let \(G = T(2, s, t)\) for even \(s \geq 4\) and even \(t \neq 0\), \(s - 2\).
	By the remarks following Theorem~\ref{T:Altshuler}, it follows that the graphs
	\(T(2, s, t)\) and \(T(2, s, s - t - 2)\) are isomorphic,
	so without loss of generality we assume that \(2 \leq t \leq \frac{s}{2} - 1\).
	
	Our strategy is to properly color the column \(C_{2}\) in such a way that
	the lists on the column \(C_{1}\) are all reduced to \(2\)-lists, so that
	\(C_{1}\) can then be properly colored using Lemma~\ref{L:cycle-choosability}.
	Note that, for every \(j\), the vertices \((1, j)\), \((1, j - 1)\), \((2, j - 1)\)
	and \((2, j + t)\) form a \(K_{4}^{-}\) with the vertices on the column \(C_{2}\)
	forming an independent set. This suggests the following scheme of coloring.
	
	Fix \(1 \leq j \leq s\). Using Lemma~\ref{L:small}, we can color
	\((2, j - 1)\) and \((2, j + t)\) such that the lists on \((1, j)\)
	and \((1, j - 1)\) reduce in size by \(1\) each.
	Then, regardless of how the rest of the neighbors of \((1, j)\) and of \((1, j - 1)\)
	in the column \(C_{2}\) are colored, the end result is that the lists on these two vertices reduce
	to \(2\)-lists, as required.
	If we can do this for each even \(j\), then we will have colored \(C_{2}\) in such a way that
	the lists on \(C_{1}\) all reduce to \(2\)-lists. A little bit of care is required
	to ensure that this can be done for every even \(j\), while also ensuring
	that the coloring on \(C_{2}\) is proper. The details now follow.

	Suppose that \(t < \frac{s}{2} - 1\).
	\begin{itemize}
		\item
		For each \(j \in \Set{2, 4, \dotsc, t}\), we may use
		Lemma~\ref{L:small} to color the vertices of the form \((2, j - 1)\) and \((2, j + t)\).
		
		\item
		For \(j = t + 2\), we need to color \((2, t + 1)\) and \((2, 2t + 2)\),
		but notice that the list on \((2, t + 1)\) has been reduced to a \(4\)-list, since
		\((2, t + 2)\) has already been colored (when \(j = 2\)). However,
		the list on \((1, t + 2)\) has also been reduced to a \(4\)-list,
		so Lemma~\ref{L:small} is still applicable.
		
		\item
		For each \(j \in \Set{ t + 4, t + 6, \dotsc, s - t - 2 }\),
		notice that the list on \((2, j - 1)\)
		has been reduced to a \(3\)-list, but the lists on \((1, j)\) and \((1, j - 1)\)
		have also been reduced to \(4\)-lists each, so Lemma~\ref{L:small} is still applicable.
		
		\item
		For \(j = s - t\), notice that the list on \((2, s - t - 1)\) is reduced
		to a \(3\)-list and the list on \((2, s)\) is reduced to a \(4\)-list,
		but the list on \((1, s - t)\) is reduced to a \(3\)-list and the list
		on \((1, s - t - 1)\) is reduced to a \(4\)-list. So, Lemma~\ref{L:small} is still applicable.
		
		\item
		Lastly, for each \(j \in \Set{s - t + 2, s - t + 4, \dotsc, s}\), 
		notice that the lists on \((2, j - 1)\)
		and \((2, j + t)\) are reduced to \(3\)-lists each, but the lists on \((1, j)\) and \((1, j - 1)\)
		have also been reduced to \(3\)-lists each, so Lemma~\ref{L:small} is still applicable.
	\end{itemize}
	
	Next, consider the case when \(t = \frac{s}{2} - 1\). Since
	\(t + 2 = s - t\), some of the cases above reduce to a single degenerate case,
	as explained below:
	\begin{itemize}
		\item
		As before, for each \(j \in \bigl\{2, 4, \dotsc, \frac{s}{2} - 1\bigr\}\), we may use
		Lemma~\ref{L:small} to color the vertices of the form \((2, j - 1)\) and \((2, j + t)\).
		
		\item
		For \(j = \frac{s}{2} + 1\), we need to color \(\bigl( 2, \frac{s}{2} \bigr)\) and \((2, s)\),
		but notice that the lists on \(\bigl( 2, \frac{s}{2} \bigr)\) and \((2, s)\) have been reduced to \(4\)-lists.
		However, the list on \(\bigl( 1, \frac{s}{2} + 1 \bigr)\) has also been reduced to a \(3\)-list,
		so Lemma~\ref{L:small} is still applicable.
		
		\item
		Lastly, just as before, for each \(j \in \bigl\{\frac{s}{2} + 3, \frac{s}{2} + 5, \dotsc, s\bigr\}\), 
		notice that the lists on \((2, j - 1)\)
		and \((2, j + t)\) are reduced to \(3\)-lists each, but the lists on \((1, j)\) and \((1, j - 1)\)
		have also been reduced to \(3\)-lists each, so Lemma~\ref{L:small} is still applicable.
	\end{itemize}
	
	Thus, the coloring scheme outlined in the beginning can be implemented
	over all even \(j\) to get a proper coloring of \(C_{2}\) so that
	the lists on \(C_{1}\) are all reduced to \(2\)-lists. The proof is completed
	by using Lemma~\ref{L:cycle-choosability} to properly color \(C_{1}\). Clearly,
	the list coloring is found in linear time.
\end{proof}

Figure~\ref{F:part2} illustrates the coloring algorithm for \(G = T(2, 10, 4)\).

\begin{figure}
\centering
	\begin{subfigure}{0.3\textwidth}
	\centering
		\begin{tikzpicture}[scale=0.8,font=\tiny]
			\draw (1,1) grid (3,10);
			\foreach \y in {2,...,10}
			\foreach \x in {1,2}
			\draw (\x,\y) -- (\x+1,\y-1);
			\foreach \x in {1,2,3}
			\foreach \y in {1,...,10}
			\draw[fill=black] (\x,\y) circle(2.5pt);
			\foreach \y in {1,...,10}{
				\node[anchor=east] at (1,\y) {(1,\y)};
				\pgfmathparse{Mod(\y+3,10)+1}
				\node[anchor=west] at (3,\pgfmathresult) {(1,\y)};
			}
			\foreach \y in {2,...,9}{
				\node[anchor=south west] at (2,\y) {\!\!(2,\y)};
			}
			\node[anchor=north] at (2,1) {(2,1)};
			\node[anchor=south] at (2,10) {(2,10)};
		\end{tikzpicture}
		\caption{}
	\end{subfigure}
	\begin{subfigure}{0.3\textwidth}
	\centering
		\begin{tikzpicture}[scale=0.8,font=\scriptsize]	
			\draw
			(1,2) -- (2,2) -- (3,2)
			(1,3) -- (2,3) -- (3,3)
			(1,4) -- (2,4) -- (3,4)
			(1,5) -- (2,5) -- (3,5)
			(1,7) -- (2,7) -- (3,7)
			(1,8) -- (2,8) -- (3,8)
			(1,9) -- (2,9) -- (3,9)
			(1,10) -- (2,10) -- (3,10);
			\draw
			(1,3) -- (2,2) -- (3,1)
			(1,4) -- (2,3) -- (3,2)
			(1,5) -- (2,4) -- (3,3)
			(1,6) -- (2,5) -- (3,4)
			(1,8) -- (2,7) -- (3,6)
			(1,9) -- (2,8) -- (3,7)
			(1,10) -- (2,9) -- (3,8)
			(2,10) -- (3,9);
			\draw
			(2,2) -- (2,3) -- (2,4) -- (2,5)
			(2,7) -- (2,8) -- (2,9) -- (2,10);
			
			\draw[dotted, line width=1pt]
			(1,1) -- (2,1) -- (3,1)
			(1,2) -- (2,1)
			(2,1) -- (2,2)
			(1,6) -- (2,6) -- (3,6)
			(1,7) -- (2,6) -- (3,5)
			(2,5) -- (2,6) -- (2,7);
			\foreach \x in {1,3}
			\foreach \y in {2,...,10}
			\draw (\x,\y) -- (\x,\y-1);
			\draw[fill=white] (1,1) circle(2.5pt) node[anchor=east] {\textbf{4}};
			\draw[fill=white] (1,2) circle(2.5pt) node[anchor=east] {\textbf{4}};
			\draw[fill=black] (1,3) circle(2.5pt) node[anchor=east] {5};
			\draw[fill=black] (1,4) circle(2.5pt) node[anchor=east] {5};
			\draw[fill=black] (1,5) circle(2.5pt) node[anchor=east] {5};
			\draw[fill=black] (1,6) circle(2.5pt) node[anchor=east] {\textbf{3}};
			\draw[fill=black] (1,7) circle(2.5pt) node[anchor=east] {\textbf{3}};
			\draw[fill=black] (1,8) circle(2.5pt) node[anchor=east] {5};
			\draw[fill=black] (1,9) circle(2.5pt) node[anchor=east] {5};
			\draw[fill=black] (1,10) circle(2.5pt) node[anchor=east] {5};
			
			\draw[fill=gray] (2,1) circle(2.5pt) node[anchor=north] {\vphantom{(2,1)}};
			\draw[fill=black] (2,2) circle(2.5pt) node[anchor=south west] {\textbf{4}};
			\draw[fill=black] (2,3) circle(2.5pt) node[anchor=south west] {5};
			\draw[fill=black] (2,4) circle(2.5pt) node[anchor=south west] {5};
			\draw[fill=black] (2,5) circle(2.5pt) node[anchor=south west] {\textbf{4}};
			\draw[fill=gray] (2,6) circle(2.5pt) node[anchor=south west] {};
			\draw[fill=black] (2,7) circle(2.5pt) node[anchor=south west] {\textbf{4}};
			\draw[fill=black] (2,8) circle(2.5pt) node[anchor=south west] {5};
			\draw[fill=black] (2,9) circle(2.5pt) node[anchor=south west] {5};
			\draw[fill=black] (2,10) circle(2.5pt) node[anchor=south] {\textbf{4}};
			
			\draw[fill=black] (3,1) circle(2.5pt) node[anchor=west] {\textbf{3}};
			\draw[fill=black] (3,2) circle(2.5pt) node[anchor=west] {5};
			\draw[fill=black] (3,3) circle(2.5pt) node[anchor=west] {5};
			\draw[fill=black] (3,4) circle(2.5pt) node[anchor=west] {5};
			\draw[fill=white] (3,5) circle(2.5pt) node[anchor=west] {\textbf{4}};
			\draw[fill=white] (3,6) circle(2.5pt) node[anchor=west] {\textbf{4}};
			\draw[fill=black] (3,7) circle(2.5pt) node[anchor=west] {5};
			\draw[fill=black] (3,8) circle(2.5pt) node[anchor=west] {5};
			\draw[fill=black] (3,9) circle(2.5pt) node[anchor=west] {5};
			\draw[fill=black] (3,10) circle(2.5pt) node[anchor=west] {\textbf{3}};
		\end{tikzpicture}
		\caption{}
	\end{subfigure}
	\begin{subfigure}{0.3\textwidth}
	\centering
		\begin{tikzpicture}[scale=0.8,font=\scriptsize]	
			\draw
			(1,2) -- (2,2) -- (3,2)
			(1,4) -- (2,4) -- (3,4)
			(1,5) -- (2,5) -- (3,5)
			(1,7) -- (2,7) -- (3,7)
			(1,9) -- (2,9) -- (3,9)
			(1,10) -- (2,10) -- (3,10);
			\draw
			(1,3) -- (2,2) -- (3,1)
			(1,5) -- (2,4) -- (3,3)
			(1,6) -- (2,5) -- (3,4)
			(1,8) -- (2,7) -- (3,6)
			(1,10) -- (2,9) -- (3,8)
			(2,10) -- (3,9);
			\draw
			(2,4) -- (2,5)
			(2,9) -- (2,10);
			
			\draw[dotted, line width=1pt]
			(1,3) -- (2,3) -- (3,3)
			(1,4) -- (2,3) -- (3,2)
			(2,2) -- (2,3) -- (2,4)
			(1,8) -- (2,8) -- (3,8)
			(1,9) -- (2,8) -- (3,7)
			(2,7) -- (2,8) -- (2,9);
			
			\foreach \x in {1,3}
			\foreach \y in {2,...,10}
			\draw (\x,\y) -- (\x,\y-1);
			\draw[fill=black] (1,1) circle(2.5pt) node[anchor=east] {4};
			\draw[fill=black] (1,2) circle(2.5pt) node[anchor=east] {4};
			\draw[fill=white] (1,3) circle(2.5pt) node[anchor=east] {\textbf{4}};
			\draw[fill=white] (1,4) circle(2.5pt) node[anchor=east] {\textbf{4}};
			\draw[fill=black] (1,5) circle(2.5pt) node[anchor=east] {5};
			\draw[fill=black] (1,6) circle(2.5pt) node[anchor=east] {3};
			\draw[fill=black] (1,7) circle(2.5pt) node[anchor=east] {3};
			\draw[fill=black] (1,8) circle(2.5pt) node[anchor=east] {\textbf{3}};
			\draw[fill=black] (1,9) circle(2.5pt) node[anchor=east] {\textbf{3}};
			\draw[fill=black] (1,10) circle(2.5pt) node[anchor=east] {5};
			
			\node[anchor=north] at (2,1) {\vphantom{(2,1)}};
			\draw[fill=black] (2,2) circle(2.5pt) node[anchor=south west] {\textbf{3}};
			\draw[fill=gray] (2,3) circle(2.5pt) node[anchor=south west] {};
			\draw[fill=black] (2,4) circle(2.5pt) node[anchor=south west] {\textbf{4}};
			\draw[fill=black] (2,5) circle(2.5pt) node[anchor=south west] {4};
			\draw[fill=black] (2,7) circle(2.5pt) node[anchor=south west] {\textbf{3}};
			\draw[fill=gray] (2,8) circle(2.5pt) node[anchor=south west] {};
			\draw[fill=black] (2,9) circle(2.5pt) node[anchor=south west] {\textbf{4}};
			\draw[fill=black] (2,10) circle(2.5pt) node[anchor=south] {4};
			
			\draw[fill=black] (3,5) circle(2.5pt) node[anchor=west] {4};
			\draw[fill=black] (3,6) circle(2.5pt) node[anchor=west] {4};
			\draw[fill=white] (3,7) circle(2.5pt) node[anchor=west] {\textbf{4}};
			\draw[fill=white] (3,8) circle(2.5pt) node[anchor=west] {\textbf{4}};
			\draw[fill=black] (3,9) circle(2.5pt) node[anchor=west] {5};
			\draw[fill=black] (3,10) circle(2.5pt) node[anchor=west] {3};
			\draw[fill=black] (3,1) circle(2.5pt) node[anchor=west] {3};
			\draw[fill=black] (3,2) circle(2.5pt) node[anchor=west] {\textbf{3}};
			\draw[fill=black] (3,3) circle(2.5pt) node[anchor=west] {\textbf{3}};
			\draw[fill=black] (3,4) circle(2.5pt) node[anchor=west] {5};
			
		\end{tikzpicture}
		\caption{}
	\end{subfigure}\\[2em]
	\begin{subfigure}{0.3\textwidth}
	\centering
		\begin{tikzpicture}[scale=0.8,font=\scriptsize]	
			\draw
			(1,2) -- (2,2) -- (3,2)
			(1,4) -- (2,4) -- (3,4)
			(1,7) -- (2,7) -- (3,7)
			(1,9) -- (2,9) -- (3,9);
			\draw
			(1,3) -- (2,2) -- (3,1)
			(1,5) -- (2,4) -- (3,3)
			(1,8) -- (2,7) -- (3,6)
			(1,10) -- (2,9) -- (3,8);
			
			\draw[dotted, line width=1pt]
			(1,5) -- (2,5) -- (3,5)
			(1,6) -- (2,5) -- (3,4)
			(2,4) -- (2,5)
			(1,10) -- (2,10) -- (3,10)
			(2,10) -- (3,9)
			(2,9) -- (2,10);
			
			\foreach \x in {1,3}
			\foreach \y in {2,...,10}
			\draw (\x,\y) -- (\x,\y-1);
			\draw[fill=black] (1,1) circle(2.5pt) node[anchor=east] {\textbf{2}};
			\draw[fill=black] (1,2) circle(2.5pt) node[anchor=east] {4};
			\draw[fill=black] (1,3) circle(2.5pt) node[anchor=east] {4};
			\draw[fill=black] (1,4) circle(2.5pt) node[anchor=east] {4};
			\draw[fill=white] (1,5) circle(2.5pt) node[anchor=east] {\textbf{4}};
			\draw[fill=white] (1,6) circle(2.5pt) node[anchor=east] {\textbf{2}};
			\draw[fill=black] (1,7) circle(2.5pt) node[anchor=east] {3};
			\draw[fill=black] (1,8) circle(2.5pt) node[anchor=east] {3};
			\draw[fill=black] (1,9) circle(2.5pt) node[anchor=east] {\phantom{(,11)}3};
			\draw[fill=black] (1,10) circle(2.5pt) node[anchor=east] {\textbf{3}};
			
			\node[anchor=north] at (2,1) {\vphantom{(2,1)}};
			\draw[fill=black] (2,2) circle(2.5pt) node[anchor=south west] {3};
			\draw[fill=black] (2,4) circle(2.5pt) node[anchor=south west] {\textbf{3}};
			\draw[fill=gray] (2,5) circle(2.5pt) node[anchor=south west] {};
			\draw[fill=black] (2,7) circle(2.5pt) node[anchor=south west] {3};
			\draw[fill=black] (2,9) circle(2.5pt) node[anchor=south west] {\textbf{3}};
			\draw[fill=gray] (2,10) circle(2.5pt) node[anchor=south] {};
			
			\draw[fill=black] (3,5) circle(2.5pt) node[anchor=west] {\textbf{2}};
			\draw[fill=black] (3,6) circle(2.5pt) node[anchor=west] {4};
			\draw[fill=black] (3,7) circle(2.5pt) node[anchor=west] {4};
			\draw[fill=black] (3,8) circle(2.5pt) node[anchor=west] {4};
			\draw[fill=white] (3,9) circle(2.5pt) node[anchor=west] {\textbf{4}};
			\draw[fill=white] (3,10) circle(2.5pt) node[anchor=west] {\textbf{2}};
			\draw[fill=black] (3,1) circle(2.5pt) node[anchor=west] {3};
			\draw[fill=black] (3,2) circle(2.5pt) node[anchor=west] {3};
			\draw[fill=black] (3,3) circle(2.5pt) node[anchor=west] {3\phantom{(,11)}};
			\draw[fill=black] (3,4) circle(2.5pt) node[anchor=west] {\textbf{3}};
			
		\end{tikzpicture}
		\caption{}
	\end{subfigure}
	\begin{subfigure}{0.3\textwidth}
	\centering
		\begin{tikzpicture}[scale=0.8,font=\scriptsize]	
			\draw
			(1,4) -- (2,4) -- (3,4)
			(1,9) -- (2,9) -- (3,9);
			\draw
			(1,5) -- (2,4) -- (3,3)
			(1,10) -- (2,9) -- (3,8);
			
			\draw[dotted, line width=1pt]
			(1,7) -- (2,7) -- (3,7)
			(1,8) -- (2,7) -- (3,6)
			(1,2) -- (2,2) -- (3,2)
			(1,3) -- (2,2) -- (3,1);
			
			\foreach \x in {1,3}
			\foreach \y in {2,...,10}
			\draw (\x,\y) -- (\x,\y-1);
			\draw[fill=black] (1,1) circle(2.5pt) node[anchor=east] {2};
			\draw[fill=black] (1,2) circle(2.5pt) node[anchor=east] {\textbf{2}};
			\draw[fill=black] (1,3) circle(2.5pt) node[anchor=east] {\textbf{2}};
			\draw[fill=black] (1,4) circle(2.5pt) node[anchor=east] {4};
			\draw[fill=black] (1,5) circle(2.5pt) node[anchor=east] {4};
			\draw[fill=black] (1,6) circle(2.5pt) node[anchor=east] {2};
			\draw[fill=white] (1,7) circle(2.5pt) node[anchor=east] {\textbf{2}};
			\draw[fill=white] (1,8) circle(2.5pt) node[anchor=east] {\textbf{2}};
			\draw[fill=black] (1,9) circle(2.5pt) node[anchor=east] {3};
			\draw[fill=black] (1,10) circle(2.5pt) node[anchor=east] {3};
			
			\node[anchor=north] at (2,1) {\vphantom{(2,1)}};
			\draw[fill=gray] (2,2) circle(2.5pt) node[anchor=south west] {};
			\draw[fill=black] (2,4) circle(2.5pt) node[anchor=south west] {3};
			\draw[fill=gray] (2,7) circle(2.5pt) node[anchor=south west] {};
			\draw[fill=black] (2,9) circle(2.5pt) node[anchor=south west] {3};
			
			\draw[fill=black] (3,5) circle(2.5pt) node[anchor=west] {2};
			\draw[fill=black] (3,6) circle(2.5pt) node[anchor=west] {\textbf{2}};
			\draw[fill=black] (3,7) circle(2.5pt) node[anchor=west] {\textbf{2}};
			\draw[fill=black] (3,8) circle(2.5pt) node[anchor=west] {4};
			\draw[fill=black] (3,9) circle(2.5pt) node[anchor=west] {4};
			\draw[fill=black] (3,10) circle(2.5pt) node[anchor=west] {2};
			\draw[fill=white] (3,1) circle(2.5pt) node[anchor=west] {\textbf{2}};
			\draw[fill=white] (3,2) circle(2.5pt) node[anchor=west] {\textbf{2}};
			\draw[fill=black] (3,3) circle(2.5pt) node[anchor=west] {3};
			\draw[fill=black] (3,4) circle(2.5pt) node[anchor=west] {3};
			
		\end{tikzpicture}
		\caption{}
	\end{subfigure}
	\begin{subfigure}{0.3\textwidth}
	\centering
		\begin{tikzpicture}[scale=0.8,font=\scriptsize]	
			\draw[dotted, line width=1pt]
			(1,9) -- (2,9) -- (3,9)
			(1,10) -- (2,9) -- (3,8)
			(1,4) -- (2,4) -- (3,4)
			(1,5) -- (2,4) -- (3,3);
			
			\foreach \x in {1,3}
			\foreach \y in {2,...,10}
			\draw (\x,\y) -- (\x,\y-1);
			
			\draw[fill=black] (1,1) circle(2.5pt) node[anchor=east] {2};
			\draw[fill=black] (1,2) circle(2.5pt) node[anchor=east] {2};
			\draw[fill=black] (1,3) circle(2.5pt) node[anchor=east] {2};
			\draw[fill=black] (1,4) circle(2.5pt) node[anchor=east] {\textbf{2}};
			\draw[fill=black] (1,5) circle(2.5pt) node[anchor=east] {\textbf{2}};
			\draw[fill=black] (1,6) circle(2.5pt) node[anchor=east] {2};
			\draw[fill=black] (1,7) circle(2.5pt) node[anchor=east] {2};
			\draw[fill=black] (1,8) circle(2.5pt) node[anchor=east] {2};
			\draw[fill=white] (1,9) circle(2.5pt) node[anchor=east] {\textbf{2}};
			\draw[fill=white] (1,10) circle(2.5pt) node[anchor=east] {\textbf{2}};
			
			\node[anchor=north] at (2,1) {\vphantom{(2,1)}};
			\draw[fill=gray] (2,4) circle(2.5pt) node[anchor=south west] {};
			\draw[fill=gray] (2,9) circle(2.5pt) node[anchor=south west] {};
			
			\draw[fill=black] (3,5) circle(2.5pt) node[anchor=west] {2};
			\draw[fill=black] (3,6) circle(2.5pt) node[anchor=west] {2};
			\draw[fill=black] (3,7) circle(2.5pt) node[anchor=west] {2};
			\draw[fill=black] (3,8) circle(2.5pt) node[anchor=west] {\textbf{2}};
			\draw[fill=black] (3,9) circle(2.5pt) node[anchor=west] {\textbf{2}};
			\draw[fill=black] (3,10) circle(2.5pt) node[anchor=west] {2};
			\draw[fill=black] (3,1) circle(2.5pt) node[anchor=west] {2};
			\draw[fill=black] (3,2) circle(2.5pt) node[anchor=west] {2};
			\draw[fill=white] (3,3) circle(2.5pt) node[anchor=west] {\textbf{2}};
			\draw[fill=white] (3,4) circle(2.5pt) node[anchor=west] {\textbf{2}};
			
		\end{tikzpicture}
		\caption{}
	\end{subfigure}
	\caption{Illustration of the proof of case~\ref{l3} in Theorem~\ref{T:Main} for \(G = T(2, 10, 4)\).}\label{F:part2}
\end{figure}

\section{Proof of \texorpdfstring{case~\ref{l4}}{case (4)} in \texorpdfstring{Theorem~\ref{T:Main}}{Theorem 1}}\label{S:baaki}

\begin{lemma}\label{L:fix}
	Let \(G\) be a \(3\)-regular bipartite graph. Let \(V(G)\)
	be partitioned into two independent sets \(A\) and \(B\).
	Let \(\ListAsgn{L}\) be a list assignment that assigns a list of size \(3\) to each vertex in \(A\)
	and a list of size \(2\) to each vertex in \(B\). Then, \(G\) is \(\ListAsgn{L}\)-choosable.
	Moreover, such a list coloring can be found in linear time.
\end{lemma}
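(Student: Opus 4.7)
The plan is to exhibit an orientation of $G$ meeting the hypotheses of Lemma~\ref{L:BBS}, and then verify that all the requisite kernel computations can be done in total linear time.

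First I will invoke Theorem~\ref{T:matching}, applicable since $G$ is regular bipartite, to obtain a perfect matching $M$ of $G$ in linear time. Then I orient every edge of $M$ from $B$ to $A$ and every edge outside $M$ from $A$ to $B$. Under this orientation, each $a \in A$ has $\outdegree(a) = 2$ (its two non-matching neighbors, both in $B$) and each $b \in B$ has $\outdegree(b) = 1$ (its matching partner in $A$). Hence $\card{\List{L}{v}} = \outdegree(v) + 1$ for every $v \in V(G)$, as required by Lemma~\ref{L:BBS}. Since $G$ is bipartite, every directed cycle has even length, so the orientation contains no odd directed cycle, and Lemma~\ref{L:BBS} immediately yields $\ListAsgn{L}$-choosability.

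The main subtlety is the linear-time claim, because the recursive procedure in the proof of Lemma~\ref{L:BBS} invokes Richardson's theorem once per color iteration. The plan to handle this is to exploit the fact that at each step the relevant induced subgraph $H$ on $U = \Set{v : c \in \List{L}{v}}$ remains bipartite, and for a bipartite digraph a kernel can be found by a two-phase sweep. The first phase iteratively adds sinks (vertices of out-degree zero) to the kernel and removes them together with their in-neighbors from $H$; the second phase, once no sinks remain, appends every surviving vertex in $A$ to the kernel. Independence is immediate: the accumulated sinks have no edges amongst themselves (else one would not be a sink) or to any surviving $A$-vertex (else either the sink would have an out-edge or the $A$-vertex would have been removed as an in-neighbor), and the surviving $A$-vertices form an independent set by bipartiteness. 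Domination is equally clean: every removed vertex was removed as an in-neighbor of some sink now in the kernel, and every surviving $B$-vertex has positive out-degree, whose out-neighbors necessarily lie in $A$ and have survived, hence in the kernel.

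Finally, a simple amortized analysis—using that $\card{\List{L}{v}} \le 3$, so each vertex and edge participates in a bounded number of recursive subproblems before being permanently coloured and discarded—shows that the total work across all recursive calls made by the procedure of Lemma~\ref{L:BBS} is $O(\card{V(G)} + \card{E(G)})$, yielding the advertised linear running time.
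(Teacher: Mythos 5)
Your proof is correct and follows essentially the same route as the paper: both obtain a perfect matching \(M\) via Theorem~\ref{T:matching}, orient \(G\) so that every vertex of \(A\) has out-degree \(2\) and every vertex of \(B\) has out-degree \(1\), and then invoke Lemma~\ref{L:BBS}. The only differences are minor---the paper orients the even cycles of \(G - M\) as directed cycles and sends \(M\) from \(A\) to \(B\), whereas you send all non-matching edges from \(A\) to \(B\) and \(M\) from \(B\) to \(A\); and you additionally spell out the linear-time sink-peeling kernel computation that the paper only gestures at.
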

\begin{proof}
	Find a perfect matching \(M\) in \(G\) and consider the graph \(G - M\).
	Since \(G - M\) is a \(2\)-regular bipartite graph, it breaks up into a union
	of disjoint even cycles. Place an orientation on the edges of \(G\) such
	that each of these cycles becomes a directed cycle, and such that the edges in \(M\)
	are oriented from \(A\) to \(B\). Then, \(\outdegree(v) = 2\) for every \(v \in A\)
	and \(\outdegree(w) = 1\) for every \(w \in B\).
	Note that there are no odd directed cycles in this orientation, since \(G\) is bipartite.
	Thus, by Lemma~\ref{L:BBS}, \(G\) is \(\ListAsgn{L}\)-choosable
	for any list assignment that assigns a list of size \(3\) to each vertex in \(A\)
	and a list of size \(2\) to each vertex in \(B\).
	
	Furthermore, Theorem~\ref{T:matching} shows that \(M\) can be found in \(O(\card{E})\) time,
	which is also \(O(\card{V})\) time, since \(G\) is of bounded degree.
	Thus, the list coloring can be found in linear time.
\end{proof}

In an earlier paper~\cite{BalachandranSankarnarayanan2021}, we proved the following theorem:
\begin{theorem}[Balachandran--Sankarnarayanan~\cite{BalachandranSankarnarayanan2021}, 2021]\label{T:old}
	Let \(G\) be a simple \(6\)-regular toroidal triangulation. If \(G\)
	is \(3\)-chromatic, then \(G\) is \(5\)-choosable.
\end{theorem}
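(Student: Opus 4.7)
The plan is to leverage the \(3\)-coloring of \(G\) to reduce the problem to list-coloring a \(3\)-regular bipartite graph, to which Lemma~\ref{L:fix} applies directly.

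First, I would fix a proper \(3\)-coloring of \(G\) with color classes \(A, B, C\) that is \emph{balanced}, in the sense that every vertex has exactly three neighbors in each of the other two classes. Such a coloring exists for \(3\)-chromatic \(T(r,s,t)\): by the classification in~\cite{CollinsHutchinson1999,YehZhu2003,Sankarnarayanan2022} one may take the coloring \((i + 2j) \bmod 3\) on the vertex set of Definition~\ref{D:triangulation} (suitably adjusted for the twist \(t\)), and a direct calculation from the adjacency rules of Definition~\ref{D:triangulation} shows that the six neighbors of \((i,j)\) split as exactly three of color \((i+2j+1) \bmod 3\) and three of color \((i+2j+2) \bmod 3\). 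Consequently, each pairwise bipartite subgraph \(G[A \cup B]\), \(G[B \cup C]\), \(G[A \cup C]\) is \(3\)-regular bipartite, placing us in the setting of Lemma~\ref{L:fix}.

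Next, let \(\ListAsgn{L}\) be any \(5\)-list assignment on \(G\). If every list in \(\ListAsgn{L}\) is identical, then \(\ListAsgn{L}\)-choosability reduces to \(5\)-colorability of \(G\), which follows from \(3\)-chromaticity. Otherwise, the key step is to precolor the independent set \(C\) from its lists so that, for every \(a \in A\), at most two of the three colors used on the \(C\)-neighbors of \(a\) lie in \(\List{L}{a}\). Under such a precoloring, the residual list on each \(a \in A\) has size at least \(3\), while the residual list on each \(b \in B\) automatically has size at least \(2\) (since \(b\) has three \(C\)-neighbors and \(\card{\List{L}{b}} = 5\)). Lemma~\ref{L:fix} then list-colors the \(3\)-regular bipartite graph \(G[A \cup B]\) from the residual lists, and combining with the precoloring of \(C\) yields a proper \(\ListAsgn{L}\)-coloring of \(G\).

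The main obstacle is producing the precoloring of \(C\). A greedy pass handles the typical case: for each \(a \in A\), locate a \(C\)-neighbor \(c\) with \(\List{L}{c} \neq \List{L}{a}\) and assign \(c\) some color from \(\List{L}{c} \setminus \List{L}{a}\); the remaining vertices of \(C\) are then filled in arbitrarily from their lists. The obstruction is when all three \(C\)-neighbors of some \(a\) have list exactly \(\List{L}{a}\), destroying all local flexibility at \(a\). I would handle this via a propagation argument in the spirit of criteria~\ref{criterion2.1}--\ref{criterion2.4} of Section~\ref{S:Preparation}: local list coincidence, together with the balanced \(3\)-coloring and the \(6\)-regularity of \(G\), forces lists to agree along the normal circuits of the triangulation and hence across all of \(G\), contradicting the assumed nonconstancy of \(\ListAsgn{L}\). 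If this fails, by swapping the roles of \(A\) and \(B\) (via Lemma~\ref{L:auto}) one obtains a second opportunity to satisfy the constraint, reversing which class plays the role of the \(3\)-list side in Lemma~\ref{L:fix}.
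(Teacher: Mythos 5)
Your overall architecture---fix a balanced proper \(3\)-coloring with classes \(A\), \(B\), \(C\); precolor the independent set \(C\) so that the residual lists on the \(3\)-regular bipartite graph \(G[A \cup B]\) have sizes \(3\) on \(A\) and \(2\) on \(B\); finish with Lemma~\ref{L:fix}---is the same reduction the paper relies on. The paper proves this theorem by citing~\cite{BalachandranSankarnarayanan2021} and observes that the only non-constructive step there is precisely the \((3,2)\)-list-colorability of a toroidal \(3\)-regular bipartite graph, which Lemma~\ref{L:fix} now supplies algorithmically. Your observation that the link of each vertex is an even cycle alternating between the other two color classes, so that each pairwise union of classes induces a \(3\)-regular bipartite subgraph, is also correct, as is the count giving residual lists of size at least \(2\) on \(B\) for free.

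The gap is in the precoloring of \(C\), which is where all the work in this reduction actually sits. First, the greedy pass is not a well-defined global procedure: a vertex \(c \in C\) has three neighbors in \(A\), and if it is designated as the savior of two of them, say \(a_1, a_2\) with \(\List{L}{a_1} \neq \List{L}{a_2}\), you need a color in \(\List{L}{c} \setminus (\List{L}{a_1} \cup \List{L}{a_2})\), which can be empty even though \(\List{L}{c}\) differs from both lists (take \(\List{L}{c} = \Set{1,\dotsc,5}\), \(\List{L}{a_1} = \Set{1,2,3,6,7}\), \(\List{L}{a_2} = \Set{4,5,8,9,10}\)). Resolving such conflicts needs a global assignment argument, not a local greedy choice. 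Second, the claimed propagation contradiction in the obstruction case is unsupported: criteria~\ref{criterion2.1} to~\ref{criterion2.4} are only in force after Lemmas~\ref{L:union1} and~\ref{L:three-criteria} have been applied, and those rest on Lemma~\ref{L:N-algorithm} and the column structure for \(r \geq 4\), which is unavailable in case~\ref{l4} (which includes graphs \(T(1, s, t)\) and \(T(3, s, t)\)); and in any event, a single vertex \(a\) all of whose neighbors carry the list \(\List{L}{a}\) does not force \(\ListAsgn{L}\) to be constant on \(G\). That obstruction is actually harmless for a different reason---the three \(C\)-neighbors of \(a\) are pairwise nonadjacent with a common list, so two of them can receive the same color, again costing \(a\) at most two colors---but this repair must be coordinated with the choices made for every other vertex of \(A\), and your proposal does not do that. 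As written, the existence of the required precoloring of \(C\) is not established, so the proof is incomplete.
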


The proof of this theorem is entirely algorithmic, except for one use of a theorem of Alon
and Tarsi~\cite{AlonTarsi1992} to show that a toroidal \(3\)-regular bipartite graph is \(\ListAsgn{L}\)-choosable
for a list assignment \(\ListAsgn{L}\) as in the hypothesis of Lemma~\ref{L:fix}.
Using the proof of Lemma~\ref{L:fix} in its place, we obtain the proof of case~\ref{l4} in Theorem~\ref{T:Main}
as a corollary:

\begin{corollary}
	Every simple \(3\)-chromatic \(6\)-regular toroidal triangulation
	is \(5\)-choosable. Moreover, a \(5\)-list coloring can be found in linear time.
\end{corollary}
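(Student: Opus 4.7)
The plan is to deduce this corollary almost for free from Theorem~\ref{T:old}, which we proved in our earlier paper~\cite{BalachandranSankarnarayanan2021}. That theorem already establishes the 5-choosability of every simple 3-chromatic 6-regular toroidal triangulation, so what remains is to certify that the proof can be turned into a linear time algorithm.

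First I would revisit the proof of Theorem~\ref{T:old} and check that every step in it — the structural reductions, the colorings of columns and short cycles, and the treatment of the small reduced instances — is already carried out constructively and in linear time via repeated invocations of Lemma~\ref{L:cycle-choosability} and direct case analysis. The only ingredient of that proof that is not immediately algorithmic is a single appeal to the Alon--Tarsi theorem, used to argue that a certain toroidal $3$-regular bipartite subgraph $G$ is $\ListAsgn{L}$-choosable whenever $\ListAsgn{L}$ assigns $3$-lists on one side of the bipartition and $2$-lists on the other.

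Next I would splice in Lemma~\ref{L:fix} in place of that appeal. The proof of Lemma~\ref{L:fix} is explicit: find a perfect matching $M$ in $G$ using Theorem~\ref{T:matching}, which runs in $O(|E|) = O(|V|)$ time since $G$ has bounded degree; note that $G - M$ is a disjoint union of even cycles, which we orient consistently, and orient the edges of $M$ from the $3$-list side $A$ to the $2$-list side $B$. The resulting orientation has no odd directed cycles (since $G$ is bipartite) and satisfies $\outdegree(v)+1=|\List{L}{v}|$ for every $v$, so Lemma~\ref{L:BBS} applies. Moreover, the kernels required at each recursive step are trivial to produce in linear time: the subgraph induced on the vertices whose current list contains a chosen color $c$ is itself a disjoint union of even paths, even cycles, and isolated vertices, each of which admits a kernel by inspection.

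The main — in fact the only — obstacle is the bookkeeping needed to verify that every intermediate step in the proof of Theorem~\ref{T:old} is truly linear time, rather than merely polynomial. Since that proof proceeds by a bounded number of sweeps over the vertex set, each sweep either coloring a short cycle via Lemma~\ref{L:cycle-choosability} or handling a constant-size reduced configuration, and since the substitution of Lemma~\ref{L:fix} adds only an $O(|V|)$ matching computation plus an $O(|V|)$ cascade of kernel extractions, the overall running time will remain linear, yielding the corollary.
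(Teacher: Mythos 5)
Your proposal is correct and follows essentially the same route as the paper: cite Theorem~\ref{T:old}, observe that its only non-constructive step is the Alon--Tarsi appeal for the $3$-regular bipartite subgraph, and replace that step with Lemma~\ref{L:fix} (perfect matching via Theorem~\ref{T:matching}, orientation with no odd directed cycles, then Lemma~\ref{L:BBS}). The paper's proof is exactly this substitution, so no further comparison is needed.
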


Lastly, we show that all the simple \(3\)-chromatic graphs \(T(r, s, t)\) are not \(3\)-choosable.
Note that \(T(r, s, t)\) is \(3\)-chromatic if and only if \(s \equiv 0 \equiv r - t \pmod{3}\).
Let
\[
\List{L}{1} \defn \Set{1, 2, 3}, \quad \List{L}{2} \defn \Set{2, 3, 4},\quad \List{L}{3} \defn \Set{1, 3, 4}.
\]

\subsection{The graphs \texorpdfstring{\(T(r, s, t)\)}{T(r, s, t)} for \texorpdfstring{\(r \geq 4\)}{r >= 4}, \texorpdfstring{\(s \geq 3\)}{s >= 3}}
Let \(\ListAsgn{L}\) be the list-assignment that assigns
the above lists to the columns of \(T(r, s, t)\) as follows:
\begin{align*}
	\List{L}{1} &: C_{1}, C_{2};\\
	\List{L}{2} &: C_{3};\\
	\List{L}{3} &: C_{4}, \dotsc, C_{r}.
\end{align*}
Let the vertices \((1, 1)\) and \((1, 2)\) be properly
colored using \(\ListAsgn{L}\) in any manner.
This uniquely determines a proper coloring of the induced subgraph on \(C_{1} \cup C_{2}\).

Now, there is a unique way to extend this coloring properly to the induced subgraph
on \(C_{2} \cup C_{3}\) as follows:
simply extend the coloring from \(C_{2}\) to \(C_{3}\) using the same lists
used on \(C_{2}\), namely \(\List{L}{1} = \Set{1, 2, 3}\); then, recolor all the vertices
in \(C_{3}\) that have the color \(1\) with the color \(4\). The reason
behind this is as follows:
\begin{itemize}
	\item whenever there exist two adjacent vertices in \(C_{2}\) that
	are colored using \(\Set{2, 3} \subset \List{L}{1} \cap \List{L}{2}\),
	the common neighbor of these two vertices in \(C_{3}\) must receive the color
	\(4\);
	
	\item in any proper coloring of \(C_{1} \cup C_{2}\), there will
	be \(s/3\) pairs of vertices in \(C_{2}\) that are colored using \(\Set{2, 3}\),
	and no two of these pairs are adjacent in \(C_{2}\);
	
	\item if a vertex in \(C_{3}\) has its color fixed to be \(4\) as above,
	then the colors of its two vertical neighbors are also fixed.
\end{itemize}
In this manner, one can see that the coloring is extended uniquely to the rest of \(C_{3}\),
with \(4\) occurring in those places where \(1\) would have occured had \(C_{3}\) also
been colored using \(\List{L}{1} = \Set{1, 2, 3}\).

Next, repeat the same process
to extend the coloring on \(C_{3}\) to a proper coloring on the induced subgraph
on \(C_{3} \cup C_{4} \cup \dotsb \cup C_{r}\) as follows: color the vertices in \(C_{4} \cup \dotsb \cup C_{r}\)
using the colors used on \(C_{3}\), namely \(\List{L}{2} = \Set{2, 3, 4}\), and then recolor
those vertices in \(C_{4} \cup \dotsb \cup C_{r}\) that have the color \(2\) with the color \(1\).

Now, we note that this coloring cannot be proper on all of \(T(r, s, t)\)
because this process of successive relabeling
has mapped the tuple \((1, 2, 3)\) to \((2, 1, 3)\). Thus, for this to be a proper
coloring of \(T(r, s, t)\), the original coloring on \(C_{1}\) must arise
as the unique extension of the coloring on \(C_{r}\) to the induced subgraph
on \(C_{r} \cup C_{1}\); but, \((2, 1, 3)\) is not a cyclic permutation
of \((1, 2, 3)\), so this cannot happen for any \(t\).

\subsection{The graphs \texorpdfstring{\(T(2, s, t)\)}{T(2, s, t)} for \texorpdfstring{\(s \geq 6\)}{s >= 6}}
First, consider the case \(s \geq 12\).
Since \(T(2, s, t)\) is assumed to be simple, we ignore the case \(t = s - 1\).
Next, by the remarks in Section~\ref{S:Preliminaries},
\(T(2, s, t)\) is isomorphic to \(T(2, s, s - t - 2)\).
Furthermore, \(t \equiv 2 \pmod{3}\) since \(T(2, s, t)\)
is assumed to be \(3\)-chromatic. Hence,
it suffices to assume that either \(t = s - 4\), or \(t\)
lies in the range \(5 \leq t \leq \floor{s/2} - 1\).

Now, let \(R_{1}, \dotsc, R_{s}\) denote the \(s\) rows of \(T(2, s, t)\). Let
\(\ListAsgn{L}\) be the list assignment that assigns the lists \(\List{L}{1}, \List{L}{2}, \List{L}{3}\)
to the rows of \(T(2, s, t)\) as follows, where \(s = 3\ell\):
\begin{align*}
	\List{L}{1} &: R_{1}, \dotsc, R_{\ell};\\
	\List{L}{2} &: R_{\ell+1},\dotsc,R_{2\ell};\\
	\List{L}{3} &: R_{2\ell+1},\dotsc,R_{s}.
\end{align*}
Figure~\ref{SF:1} illustrates this list assignment for the graph \(T(2, 12, 5)\);
the vertices colored red, yellow, and blue are assigned the lists
\(\List{L}{1}\), \(\List{L}{2}\), and \(\List{L}{3}\), respectively.

Let the vertices \((1, 1)\) and \((1, 2)\) be properly
colored using \(\ListAsgn{L}\) in any manner.
This uniquely determines a proper coloring of the first \(\ell\) rows.
Now, we can find a vertex \(v\) in one of the remaining
blocks of size \(\ell\) such that the residual list on \(v\) has size equal to \(1\),
as follows:
\begin{itemize}
	\item If the colors on \((1,\ell)\) and \((2,\ell)\) are both present
	in \(\List{L}{2}\), then take \(v = (1,\ell + 1)\).
	
	\item If the color on \((1, \ell)\) is \(1\), then the pairs
	\((1, \ell - 1), (1, \ell - 2)\) and \((2, \ell), (2, \ell - 1)\)
	are colored using \(\Set{2, 3}\). Additionally, if the color on \((2, \ell)\)
	is \(2\), then the pair \((2,\ell - 1), (2,\ell - 2)\) is colored using
	\(\Set{1, 3}\), and if the color on \((2, \ell)\) is \(3\), then the pair
	\((1, \ell), (1, \ell - 1)\) is colored using \(\Set{1, 3}\).
	
	From this data, one choice for \(v\) is given by:
	\[
	v =
	\begin{cases}
		(2, \ell - 1 + t), & 5 \leq t \leq \ell + 1;\\
		(1, \ell - 2 - t), & \ell + 2 \leq t \leq \floor{s/2} - 1
		\text{ and } (2, \ell) \text{ is colored } 2;\\
		(2, \ell + t), & \ell + 2 \leq t \leq \floor{s/2} - 1
		\text{ and } (2, \ell) \text{ is colored } 3;\\
		(1, \ell + 3), & t = s - 4.
	\end{cases}
	\]
	
	\item A similar analysis can be done when \((2, \ell)\) is given the color \(1\).
	In that case, one choice for \(v\) is:
	\[
	v =
	\begin{cases}
		(2, \ell - 2 + t), & 5 \leq t \leq \ell + 1;\\
		(1, \ell - 1 - t), & \ell + 2 \leq t \leq \floor{s/2} - 1
		\text{ and } (1, \ell) \text{ is colored } 2;\\
		(2, \ell + t), & \ell + 2 \leq t \leq \floor{s/2} - 1
		\text{ and } (1, \ell) \text{ is colored } 3;\\
		(1, \ell + 2), & t = s - 4.
	\end{cases}
	\]
\end{itemize}

This allows one to extend the coloring to the entire block of \(\ell\) vertices
in which \(v\) belongs. Finally, we can repeat the process to color the third block
of \(\ell\) vertices. But, this is not a proper coloring of \(T(2, s, t)\) for the
same reason as in the previous case.

The remaining cases when \(r = 2\) are \(T(2, 6, 2)\), \(T(2, 9, 2)\) and \(T(2, 9, 5)\).
Since the last two are isomorphic to each other, we shall only consider \(T(2, 9, 5)\).
In this case, we use the same list assignment as in the case when \(s \geq 12\),
and we note that after the first block of \(\ell\) vertices is colored
we can choose \(v\) to be either \((1, 6)\) or \((1, 5)\), depending
on whether the color \(1\) is given to \((1, 3)\) or \((2, 3)\), respectively.
The only graph left to consider is \(T(2, 6, 2)\), which we handle in an ad hoc
manner in~\ref{S:Appendix}.

\begin{figure}
\centering
	\begin{tikzpicture}[font=\scriptsize]
		\draw (1,1) grid (4,12);
		\foreach \y in {2,...,12}
		\foreach \x in {1,2,3}
		\draw (\x,\y) -- (\x+1,\y-1);
		\foreach \y in {1,...,4}{
			\draw[fill=myred] (2,\y) circle(2.5pt);
			\draw[fill=myred] (3,\y) circle(2.5pt);
		}
		\foreach \y in {5,...,8}{
			\draw[fill=mygreen] (2,\y) circle(2.5pt);
			\draw[fill=mygreen] (3,\y) circle(2.5pt);
		}
		\foreach \y in {9,...,12}{
			\draw[fill=myblue] (2,\y) circle(2.5pt);
			\draw[fill=myblue] (3,\y) circle(2.5pt);
		}
		\foreach \y in {1,...,4}{
			\pgfmathparse{Mod(\y+6,12)+1}
			\draw[fill=myred] (1,\pgfmathresult) circle(2.5pt) node[anchor=east] {(2,\y)};
			\pgfmathparse{Mod(\y+4,12)+1}
			\draw[fill=myred] (4,\pgfmathresult) circle(2.5pt) node[anchor=west] {(1,\y)};
		}
		\foreach \y in {5,...,8}{
			\pgfmathparse{Mod(\y+6,12)+1}
			\draw[fill=mygreen] (1,\pgfmathresult) circle(2.5pt) node[anchor=east] {(2,\y)};
			\pgfmathparse{Mod(\y+4,12)+1}
			\draw[fill=mygreen] (4,\pgfmathresult) circle(2.5pt) node[anchor=west] {(1,\y)};
		}
		\foreach \y in {9,...,12}{
			\pgfmathparse{Mod(\y+6,12)+1}
			\draw[fill=myblue] (1,\pgfmathresult) circle(2.5pt) node[anchor=east] {(2,\y)};
			\pgfmathparse{Mod(\y+4,12)+1}
			\draw[fill=myblue] (4,\pgfmathresult) circle(2.5pt) node[anchor=west] {(1,\y)};
		}
		\foreach \y in {2,...,11}{
			\node[right=10pt,above=0.1pt] at (2,\y) {(1,\y)};
			\node[right=10pt,above=0.1pt] at (3,\y) {(2,\y)};
		}
		\node[anchor=north] at (2,1) {(1,1)};
		\node[anchor=north] at (3,1) {(2,1)};
		\node[anchor=south] at (2,12) {(1,12)};
		\node[anchor=south] at (3,12) {(2,12)};
	\end{tikzpicture}
	\caption{Illustration of non-\(\ListAsgn{L}\)-colorable
		\(3\)-list-assignments \(\ListAsgn{L}\) on \(3\)-chromatic graphs \(T(2, s, t)\)
		for \(s \geq 12\) via \(G = T(2, 12, 5)\).
		Distinct colors denote distinct lists
		among \(\List{L}{1}, \List{L}{2}, \List{L}{3}\).}\label{SF:1}
\end{figure}

\subsection{The graphs \texorpdfstring{\(T(3, s, t)\)}{T(3, s t)} for \texorpdfstring{\(s \geq 3\)}{s >= 3}}
First, consider the case \(s \geq 12\) and \(t \neq 0\).
By the remarks in Section~\ref{S:Preliminaries},
\(T(3, s, t)\) is isomorphic to \(T(3, s, s - t - 3)\),
and \(t \equiv 0 \pmod{3}\) since \(T(3, s, t)\)
is assumed to be \(3\)-chromatic. Hence,
it suffices to assume that \(t\)
lies in the range \(3 \leq t \leq \floor{(s - 3)/2}\).
We use the same list assignment on these graphs as in the case
\(T(2, s, t)\) for \(s \geq 12\). 
A similar analysis shows that \(T(3, s, t)\)
is not \(3\)-choosable in these cases, so we omit the details.

The only remaining cases are \(T(3, s, 0)\) for \(s \geq 3\), \(T(3, 6, 3)\), \(T(3, 9, 3)\) and \(T(3, 9, 6)\).
It is easy to see that the same list assignment as above
also works for \(T(3, s, 0)\) for \(s \geq 6\). Also, the graph \(T(3, 3, 0)\) is isomorphic to \(K_{3, 3, 3}\),
which is known to be \(4\)-list chromatic~\cite{Kierstead2000}. Lastly, the graphs
\(T(3, 6, 0)\) and \(T(3, 6, 3)\) are isomorphic to each other,
and so are \(T(3, 9, 0)\) and \(T(3, 9, 6)\).
So, the only graph left to consider is \(T(3, 9, 3)\), which we handle in an ad hoc manner in \ref{S:Appendix}.

\subsection{The graphs \texorpdfstring{\(T(1, s, t)\)}{T(1, s, t)} for \texorpdfstring{\(s \geq 9\)}{s >= 9}}\label{SS:not-3-choosable}
Suppose that the \(3\)-chromatic graph \(T(1, s, t)\)
is not isomorphic to \(T(r', s', t')\) for any \(r' > 1\).
This happens if and only if \(\gcd(s, t) = 1 = \gcd(s, t + 1)\),
so we must have \(s \equiv 3 \pmod{6}\).
By the remarks in Section~\ref{S:Preliminaries},
\(T(1, s, t)\) is isomorphic to \(T(1, s, s - t - 1)\),
so we can assume \(0 \leq t \leq \floor{(s - 1)/2}\). Moreover,
\(T(1, s, t)\) has loops when \(t = 0\) and has multiple edges
when \(t = 1, \floor{(s - 1)/2}\), so we ignore these cases.
Since we assume that \(T(1, s, t)\) is \(3\)-chromatic, we also
have \(t \equiv 1 \pmod{3}\).
Thus, it suffices to consider only those \(t\)
in the range \(4 \leq t \leq (s - 7)/2\).
Note that the least value of \(s\) for which there exists
some \(t\) in the above range and for which \(\gcd(s, t) = 1 = \gcd(s, t + 1)\)
is \(s = 21\).

For simplicity,
we label the vertex \((1, j)\) with the integer \(j\)
(recall that \(j\) is taken modulo \(s\)).
We shall use the following modifications of the above coloring scheme.

First, suppose that \(7 \leq t < (s + 1) / 4\). 
Fix \(\List{L}{0}\) to be an arbitrary \(3\)-list. Let \(\ListAsgn{L}\) be the list assignment
on \(T(1, s, t)\) that assigns the lists \(\List{L}{0}, \List{L}{1}, \List{L}{2}, \List{L}{3}\)
as follows:
\begin{align*}
	\List{L}{1} &: \Set{ s - kt, s - 1 - kt : k = 0, 1, 2, 3, 4 },\\
	\List{L}{2} &: \Set{ s - 2 - kt, s - 3 - kt : k = 0, 1, 2, 3},\\
	\List{L}{3} &: \Set{ s - 4 - kt, \dotsc, s - t + 1 - kt : k = 0, 1, 2, 3 },
\end{align*}
and any remaining vertices are assigned the list \(\List{L}{0}\).
Figure~\ref{SF:circulant1} illustrates this list assignment for the graph \(T(1, 33, 7)\);
the vertices colored lilac, red, yellow, and blue are assigned the lists
\(\List{L}{0}\), \(\List{L}{1}\), \(\List{L}{2}\), and \(\List{L}{3}\), respectively.

Essentially the same arguments as before work in this case as well, so we omit the details
from here onward. Figure~\ref{SF:closeup1} shows a selected portion of Figure~\ref{SF:circulant1}
on which a similar argument as in the previous cases can be applied to show that
\(T(1, 33, 7)\) is not \(3\)-choosable.

\begin{figure}
\centering
	\begin{subfigure}{0.45\textwidth}
	\centering
		\begin{tikzpicture}[font=\scriptsize,scale=0.5]
			\draw (1,1) grid (5,33);
			\foreach \y in {2,...,33}
			\foreach \x in {1,...,4}
			\draw (\x,\y) -- (\x+1,\y-1);
			\foreach \y in {1,...,33}{
				\node[anchor=east] at (1,\y) {\y};
				\pgfmathparse{Mod(\y+27,33)+1}
				\node[anchor=west] at (5,\pgfmathresult) {\y};
			}
			\node[anchor=north] at (1,1) {\vphantom{\scriptsize{(1,1)}}};
			\node[anchor=north] at (2,1) {27};
			\node[anchor=north] at (3,1) {20};
			\node[anchor=north] at (4,1) {13};
			\node[anchor=south] at (2,33) {26};
			\node[anchor=south] at (3,33) {19};
			\node[anchor=south] at (4,33) {12};
			\foreach \y in {1,2,3}{
				\draw[fill=myyellow] (1,\y) circle(4pt);
				\pgfmathparse{Mod(\y+6,33)+1}
				\draw[fill=myyellow] (2,\pgfmathresult) circle(4pt);	
				\pgfmathparse{Mod(\y+13,33)+1}
				\draw[fill=myyellow] (3,\pgfmathresult) circle(4pt);	
				\pgfmathparse{Mod(\y+20,33)+1}
				\draw[fill=myyellow] (4,\pgfmathresult) circle(4pt);	
				\pgfmathparse{Mod(\y+27,33)+1}
				\draw[fill=myyellow] (5,\pgfmathresult) circle(4pt);	
			}
			\foreach \y in {4,5,11,12,18,19,25,26,32,33}{
				\draw[fill=myred] (1,\y) circle(4pt);
				\pgfmathparse{Mod(\y+6,33)+1}
				\draw[fill=myred] (2,\pgfmathresult) circle(4pt);	
				\pgfmathparse{Mod(\y+13,33)+1}
				\draw[fill=myred] (3,\pgfmathresult) circle(4pt);	
				\pgfmathparse{Mod(\y+20,33)+1}
				\draw[fill=myred] (4,\pgfmathresult) circle(4pt);	
				\pgfmathparse{Mod(\y+27,33)+1}
				\draw[fill=myred] (5,\pgfmathresult) circle(4pt);	
			}
			\foreach \y in {6,7,8,13,14,15,20,21,22,27,28,29}{
				\draw[fill=myblue] (1,\y) circle(4pt);
				\pgfmathparse{Mod(\y+6,33)+1}
				\draw[fill=myblue] (2,\pgfmathresult) circle(4pt);	
				\pgfmathparse{Mod(\y+13,33)+1}
				\draw[fill=myblue] (3,\pgfmathresult) circle(4pt);	
				\pgfmathparse{Mod(\y+20,33)+1}
				\draw[fill=myblue] (4,\pgfmathresult) circle(4pt);	
				\pgfmathparse{Mod(\y+27,33)+1}
				\draw[fill=myblue] (5,\pgfmathresult) circle(4pt);	
			}
			\foreach \y in {9,10,16,17,23,24,30,31}{
				\draw[fill=mygreen] (1,\y) circle(4pt);
				\pgfmathparse{Mod(\y+6,33)+1}
				\draw[fill=mygreen] (2,\pgfmathresult) circle(4pt);	
				\pgfmathparse{Mod(\y+13,33)+1}
				\draw[fill=mygreen] (3,\pgfmathresult) circle(4pt);	
				\pgfmathparse{Mod(\y+20,33)+1}
				\draw[fill=mygreen] (4,\pgfmathresult) circle(4pt);	
				\pgfmathparse{Mod(\y+27,33)+1}
				\draw[fill=mygreen] (5,\pgfmathresult) circle(4pt);	
			}
		\end{tikzpicture}
		\caption{\(T(1,33,7)\)}\label{SF:circulant1}
	\end{subfigure}
	\begin{subfigure}{0.45\textwidth}
	\centering
		\begin{tikzpicture}[font=\scriptsize]
			\draw (1,26) grid (5,33);
			\draw[dotted] (1,25) -- (1,26) -- (2,25) -- (2,26) -- (3,25) -- (3,26) -- (4,25) -- (4,26) -- (5,25) -- (5,26);
			\node[anchor=north] at (2,25) {\vphantom{(2,25)}};
			\foreach \y in {27,...,33}
			\foreach \x in {1,...,4}
			\draw (\x,\y) -- (\x+1,\y-1);
			\foreach \y in {26,...,33}{
				\node[anchor=east] at (1,\y) {\y};
			}
			\foreach \y in {19,...,25}{
				\pgfmathparse{Mod(\y+6,33)+1}
				\node[anchor=south west] at (2,\pgfmathresult) {\y};
			}
			\foreach \y in {12,...,18}{
				\pgfmathparse{Mod(\y+13,33)+1}
				\node[anchor=south west] at (3,\pgfmathresult) {\y};
			}
			\foreach \y in {5,...,11}{
				\pgfmathparse{Mod(\y+20,33)+1}
				\node[anchor=south west] at (4,\pgfmathresult) {\y};
			}
			\foreach \y in {1,2,3,4,5,31,32,33}{
				\pgfmathparse{Mod(\y+27,33)+1}
				\node[anchor=west] at (5,\pgfmathresult) {\y};
			}
			\node[anchor=south] at (2,33) {26};
			\node[anchor=south] at (3,33) {19};
			\node[anchor=south] at (4,33) {12};
			\foreach \x in {1,2,3,4,5}{
				\draw[fill=myred] (\x,33) circle(2.5pt);
				\draw[fill=myred] (\x,32) circle(2.5pt);
			}
			\foreach \x in {1,2,3,4}{
				\draw[fill=mygreen] (\x,31) circle(2.5pt);
				\draw[fill=mygreen] (\x,30) circle(2.5pt);
				\draw[fill=myblue] (\x,29) circle(2.5pt);
				\draw[fill=myblue] (\x,28) circle(2.5pt);
				\draw[fill=myblue] (\x,27) circle(2.5pt);
				\draw[fill=myred] (\x,26) circle(2.5pt);
			}
			\draw[fill=myyellow] (5,31) circle(2.5pt);
			\draw[fill=myyellow] (5,30) circle(2.5pt);
			\draw[fill=myyellow] (5,29) circle(2.5pt);
			\draw[fill=myred] (5,28) circle(2.5pt);
			\draw[fill=myred] (5,27) circle(2.5pt);
			\draw[fill=mygreen] (5,26) circle(2.5pt);
			
		\end{tikzpicture}
		\caption{Close-up of \(T(1,33,7)\)}\label{SF:closeup1}
	\end{subfigure}
	\caption{Illustration of non-\(\ListAsgn{L}\)-colorable
		\(3\)-list-assignments \(\ListAsgn{L}\) on \(3\)-chromatic graphs \(T(1, s, t)\)
		for \(s \geq 9\) and \(7 \leq t < (s+1)/4\) via \(G = T(1, 33, 7)\).
		Distinct colors denote distinct lists
		among \(\List{L}{0}, \List{L}{1}, \List{L}{2}, \List{L}{3}\).}\label{F:non-3-choosability}
\end{figure}

Next, suppose that \((s + 6) / 4 \leq t < (s - 3)/3\). 
Define \(\ListAsgn{L}\) as follows:
\begin{align*}
	\List{L}{1} &: \Set{ s - kt, s - 1 - kt : k = 0, 1, 2, 3, 4 },\\
	\List{L}{2} &: \Set{ s - 2 - kt, s - 3 - kt : k = 0, 1, 2, 3, 4},\\
	\List{L}{3} &: \Set{ s - 4 - kt, s - 5 - kt : k = 0, 1, 2, 3, 4 }\\
	&\qquad \cup \Set{ s - 6 - kt, \dotsc, 2s - 4t + 1 - kt : k = 0, 1, 2 }\\
	&\qquad \cup \Set{ 2s - 4t - kt, \dotsc, 2s - 4t - 5 - kt : k = 1, 2 }\\
	&\qquad \cup \Set{ 2s - 4t - 6 - kt, \dotsc, s - t + 1 - kt : k = 0, 1, 2 },
\end{align*}
and any remaining vertices are assigned the list \(\List{L}{0}\).

Next, suppose that \((s + 3)/3 < t \leq (s - 7)/2\). 
Define \(\ListAsgn{L}\) as follows:
\begin{align*}
	\List{L}{1} &: \Set{ s - kt, s - 1 - kt : k = 0, 1, 2, 3, 4 },\\
	\List{L}{2} &: \Set{ s - 2 - kt, s - 3 - kt : k = 0, 1, 2, 3, 4},\\
	\List{L}{3} &: \Set{ s - 4 - kt, s - 5 - kt : k = 0, 1, 2, 3, 4 }\\
	&\qquad \cup \Set{ s - 6 - kt, \dotsc, 2s - 3t + 1 - kt : k = 0, 1 }\\
	&\qquad \cup \Set{ 2s - 3t - 2 - kt, \dotsc, s - t + 1 - kt : k = 0, 1 },
\end{align*}
and any remaining vertices are assigned the list \(\List{L}{0}\).

The remaining cases are when \(t = 4, (s + 1)/4, (s - 3)/3, (s + 3)/3\).
These are handled by modifying these list assignments
appropriately, as we show in \ref{S:Appendix}.

The proofs of Theorem~\ref{T:Main} and Corollary~\ref{C:main} are now complete from the above results in
Sections~\ref{S:Main} to~\ref{S:baaki}.

\section{Concluding remarks and further questions}\label{S:Conclusion}

In the remarks following the statement of Theorem~\ref{T:Main} in Section~\ref{S:Introduction},
we noted that Theorem~\ref{T:Main} excludes only a small, finite set of \(5\)-chromatic
graphs, as well as an infinite subset of \(4\)-chromatic graphs, both of
the form \(T(1, s, t)\). We shall elaborate on these details now.

\subsection{The simple graphs \texorpdfstring{\(T(r, s, t)\)}{T(r, s, t)} for \texorpdfstring{\(r < 4\)}{r < 4} or \texorpdfstring{\(s < 3\)}{s < 3}}

We first look at the graphs that are not covered by case~\ref{l1} in Theorem~\ref{T:Main};
these are the graphs \(T(r, s, t)\) with \(r < 4\) or \(s < 3\).
But, in particular, we need only be concerned with the choosability
of the simple graphs among these, because
if \(T(r, s, t)\) is a loopless multigraph, then we may
remove the duplicated edges to get a simple \(d\)-regular graph for \(d \leq 5\),
which is \(5\)-choosable by Brooks's theorem~\cite{Brooks1941,ErdosRubinEtAl1980,Vizing1976},
except when the graph is isomorphic to \(K_{6}\) (but this happens only when
\((r, s) \in \Set{ (1, 6), (2, 3), (3, 2) }\)).
Moreover, these graphs can be \(5\)-list colored in linear time~\cite{Skulrattanakulchai2006}.

Now, one can check that the graphs \(T(r, s, t)\) with \(s < 3\) either contain loops or multiple edges,
so it suffices to assume \(r < 4\) and \(s \geq 3\).

For \(r = 3, s \geq 3\), there are no graphs \(T(3, s, t)\) with loops or multiple edges.

For \(r = 2, s \geq 3\), there are no graphs with loops, and the loopless multigraphs are precisely those
with \(t = 0\), \(s - 2\), \(s - 1\), so we assume that \(1 \leq t \leq s - 3\).
In particular, it suffices to assume that \(s \geq 4\) in this case.

For \(r = 1, s \geq 3\), the graph \(T(1, s, t)\) is isomorphic to \(T(1, s, s - t - 1)\), so
it suffices to consider the values of \(t\) in the range \(0 \leq t \leq \floor{(s - 1) / 2}\).
Then, the graphs \(T(1, s, t)\) with loops are precisely those with \(t = 0\),
and the loopless multigraphs are precisely those with \(t = 1\), \(\floor{(s - 1) / 2}\).
So, when \(r = 1\), we need only consider the graphs \(T(1, s, t)\) with
\(2 \leq t \leq \floor{(s - 1) / 2} - 1\). In particular, it suffices to assume
that \(s \geq 7\) in this case.

\subsubsection{The graphs \texorpdfstring{\(T(3, s, t)\)}{T(3, s, t)} for \texorpdfstring{\(s \geq 3\)}{s >= 3}}\label{SS:T(3st)}

The three normal circuits for \(T(3, s, t)\) have lengths
\(s\), \(3s/\gcd(s, t)\) and \(3s/\gcd(s, t + 3)\).
Again, if either \(3s/\gcd(s, t)\) or \(3s/\gcd(s, t + 3)\)
is at least \(4\), then \(T(3, s, t)\) is isomorphic
to \(T(r', s', t')\) for some \(r' \geq 4\), and
we are done by case~\ref{l1} in Theorem~\ref{T:Main}.
So, assume that both are at most \(3\).
If either one equals \(3\), then so does the other,
and \(T(3, s, t)\) is in fact \(3\)-chromatic in this case, so it is \(5\)-choosable
by case~\ref{l4} in Theorem~\ref{T:Main}.
So, assume that both are at most \(2\).
But, it is not possible that both \(\gcd(s, t)\) and \(\gcd(s, t + 3)\) equal \(2\).
Note that all the above \(5\)-list colorings can be found in linear time as well.
The only case left is when \(T(3, s, t)\) is isomorphic
to \(T(1, 3s, t'')\) and it does not satisfy any of the cases~\ref{l1} to~\ref{l4}.

\subsubsection{The graphs \texorpdfstring{\(T(2, s, t)\)}{T(2, s, t)} for \texorpdfstring{\(s \geq 4\)}{s >= 4}}\label{SS:T(2st)}

The three normal circuits of \(T(2, s, t)\) have lengths
\(s\), \(2s/\gcd(s, t)\) and \(2s/\gcd(s, t + 2)\).
If either \(2s/\gcd(s, t)\) or \(2s/\gcd(s, t + 2)\)
is at least \(4\), then \(T(2, s, t)\) is isomorphic
to \(T(r', s', t')\) for some \(r' \geq 4\), and so it
is \(5\)-choosable by case~\ref{l1} in Theorem~\ref{T:Main}.
So, suppose that both \(2s/\gcd(s, t)\) and \(2s/\gcd(s, t + 2)\)
are at most \(3\). Note that both cannot be equal to \(3\) simultaneously.
If any one equals \(2\), then so does the other, and this case
is covered by case~\ref{l3} in Theorem~\ref{T:Main}. All the above
\(5\)-list colorings can clearly be found in linear time as well.
The only case left is when \(T(2, s, t)\) is isomorphic
to \(T(1, 2s, t'')\) and it does not satisfy any of the cases~\ref{l1} to~\ref{l4}.

\subsubsection{The graphs \texorpdfstring{\(T(1, s, t)\)}{T(1, s, t)} for \texorpdfstring{\(s \geq 7\)}{s >= 7}}\label{SS:T(1st)}
When \(s = 7\), we have to only
consider the case \(t = 2\), and \(T(1, 7, 2)\) is isomorphic to \(K_{7}\),
which is both \(7\)-chromatic and \(7\)-list chromatic. When \(s = 11\),
we have to consider the cases \(t = 2\), \(3\), \(4\), but in each
case the graph is isomorphic to the \(6\)-chromatic triangulation \(J\)
of Albertson and Hutchinson~\cite{AlbertsonHutchinson1980}
mentioned in Section~\ref{S:Introduction}. By Dirac's map color theorem for choosability~\cite{BohmeMoharEtAl1999},
the graph \(J\) is also \(6\)-list chromatic.
So, assume that \(s \geq 8\) and \(s \neq 11\).

As shown by Yeh and Zhu~\cite{YehZhu2003}, other than a small, finite list of exceptions, the simple \(5\)-chromatic
\(6\)-regular toroidal triangulations are those isomorphic to \(T(1, s, 2)\)
for \(s \not\equiv 0 \pmod{4}\), \(s \geq 9\), \(s \neq 11\). Case~\ref{l2} of Theorem~\ref{T:Main} shows that
the graphs \(T(1, s, 2)\) for \(s \geq 9\), \(s \neq 11\) are all \(5\)-choosable
in linear time. Thus, we obtain an infinite class
of \(5\)-chromatic-choosable simple toroidal triangulations,
proving Corollary~\ref{C:main}.

Thus, the only graphs that are
not covered by Theorem~\ref{T:Main} are of the form \(T(1, s, t)\). Moreover,
these consist only of the finitely many \(5\)-chromatic graphs
not of the form \(T(1, s, 2)\), as well as the \(4\)-chromatic graphs not covered
by cases~\ref{l1} to~\ref{l3}. We are presently unable to comment on the choosability of these remaining graphs.

\subsection{Further questions}\label{SS:fq}

As shown by Yeh and Zhu~\cite{YehZhu2003}, there is a small, finite set of \(5\)-chromatic
graphs of the form \(T(1, s, t)\) that are not isomorphic to \(T(1, s, 2)\),
with the largest among them having \(37\) vertices: these have parameters \((s,t)\) in the set \(\{ (13,3) \), \((17,3)\), \((18,3)\), \((19,3)\), \((25,3)\), \((25,9)\), \((26,7)\), \((33,6)\), \((37,10) \}\)\footnote{The list of these parameters in \cite{YehZhu2003} contains pairs \((s,t)\) that give rise to isomorphic graphs, for instance \((13,3)\) as well as \((13,4)\). It is straightforward to obtain the above list of nine values for \((s,t)\) from the full list: for instance, see \cite[Theorem 4]{MeszkaNedelaEtAl2006} (though they mistakenly omit the case \((s,t) = (25,9)\)).}.
Among these, the graph \(T(1,25,9)\) is isomorphic to \(T(5,5,2)\) and so its \(5\)-choosability is covered by case~\ref{l1}.
For the remaining eight graphs, we have the following natural question:

\begin{question}\label{Q:1}
	Is \(\cho(G) = 5\) for the eight \(6\)-regular toroidal triangulations
	that are \(5\)-chromatic and not isomorphic to \(T(1,25,9)\) or \(T(1, s, 2)\) for any \(s\)?
\end{question}

In our earlier paper~\cite{BalachandranSankarnarayanan2021}, we asked whether any
of the \(3\)-chromatic \(6\)-regular toroidal triangulations are \(5\)-list chromatic.
In light of the results in this paper, we consider a similar question for the
\(4\)-chromatic triangulations not covered in Theorem~\ref{T:Main}.

\begin{question}\label{Q:1.5}
	Is \(\cho(G) \in \Set{4, 5}\) for every \(4\)-chromatic \(6\)-regular toroidal triangulation?
\end{question}
Essentially, we ask whether or not there exists any \(4\)-chromatic \(6\)-list chromatic graph \(T(r, s, t)\).

In our earlier paper~\cite{BalachandranSankarnarayanan2021}, we defined the \defining{jump} of a graph \(G\), \(\jump(G)\),
to be \(\cho(G) - \chr(G)\).
There we showed that every loopless \(6\)-regular toroidal triangulation
satisfies \(\jump(G) \leq 2\). The largest jump for any toroidal graph (which we
defined as \(\jump(g)\) for \(g = 0\)) is at least \(2\) since there exist
\(3\)-chromatic planar graphs that are \(5\)-list chromatic~\cite{Mirzakhani1996,VoigtWirth1997}.
However, we do not have any ``legitimate'' example of a nonplanar toroidal graph \(G\)
that satisfies \(\jump(G) = 2\).

\begin{question}\label{Q:3}
	Does there exist a nonplanar toroidal graph \(G\) with \(\jump(G) \geq 2\)
	and such that any planar subgraph \(H\) of \(G\) has \(\jump(H) < 2\)?
\end{question}

We note that such ``legitimate'' examples must exist as the genus \(g\) increases:
we have shown~\cite{BalachandranSankarnarayanan2021} that for connected graphs embeddable on an orientable
surface with genus \(g > 0\), the largest jump among the \(r\)-chromatic graphs
is of the order \(o(\sqrt{g})\) when \(r\) is of the order \(o(\sqrt{g}/\log_{2}(g))\),
so graphs with small chromatic number (in particular, any planar graph) cannot
be the sole examples of graphs attaining a large jump on a surface of large genus \(g > 0\).

\subsection*{Declarations of interest}
None

\subsection*{Acknowledgements}
The work of Brahadeesh Sankarnarayanan was done while at the Indian Institute of Technology Bombay, and was supported by the National Board for Higher Mathematics (NBHM), Department of Atomic Energy (DAE), Govt.\ of India.

\appendix	
	
	\section{Miscellaneous cases in \texorpdfstring{Theorem~\ref{T:Main}}{Theorem 1}}\label{S:Appendix}
	
	We describe \(3\)-list assignments on the \(3\)-chromatic graphs
	not discussed in Section~\ref{S:baaki}, in order to show
	that they are not \(3\)-choosable. Similar arguments
	as described in Section~\ref{SS:not-3-choosable} will show that these graphs
	are not \(3\)-choosable for the given list assignments, so we omit
	the details.
	
	First, consider \(T(1, s, 4)\) for \(s \geq 21\).
	Let \(\ListAsgn{L}\) be the list assignment
	that assigns the lists \(\List{L}{0}, \List{L}{1}, \List{L}{2}, \List{L}{3}\)
	as follows:
	\begin{align*}
		\List{L}{1} &: \Set{ s - kt, s - 1 - kt : k = 0, 1, 2, 3, 4 },\\
		\List{L}{2} &: \Set{ s - 2 - kt : k = 0, 1, 2, 3},\\
		\List{L}{3} &: \Set{ s - 3 - kt : k = 0, 1, 2, 3 },
	\end{align*}
	and any remaining vertices are assigned the list \(\List{L}{0}\).
	Figure~\ref{F:case2} illustrates this list assignment for the graph \(T(1, 21, 4)\).

	Next, consider \(T(1, s, t)\) for \(t = (s + 1)/4\).
	Define \(\ListAsgn{L}\) as follows:
	\begin{align*}
		\List{L}{1} &: \Set{ s - kt, s - 1 - kt : k = 0, 1, 2, 3 },\\
		\List{L}{2} &: \Set{ s - 2 - kt, s - 3 - kt : k = 0, 1, 2, 3},\\
		\List{L}{3} &: \Set{ s - 4 - kt, s - 5 - kt : k = 0, 1, 2, 3 },\\
		&\qquad \cup \Set{ s - 6 - kt, \dotsc, s - t + 1 : k = 0, 1, 2 },
	\end{align*}
	and any remaining vertices are assigned the list \(\List{L}{0}\).
	Figure~\ref{F:case3} illustrates this list assignment for the graph \(T(1, 27, 7)\).

	Next, consider \(T(1, s, t)\) for \(t = (s + 3)/3\), \(s > 27\).
	Define \(\ListAsgn{L}\) as follows:
	\begin{align*}
		\List{L}{1} &: \Set{ s - kt, s - 1 - kt : k = 0, 1, 2, 3, 4 },\\
		\List{L}{2} &: \Set{ s - 2 - kt : k = 1, 2, 3, 4},\\
		&\qquad \cup \Set{ s - 3 - kt : k = 2, 3, 4 }\\
		&\qquad \cup \Set{ s - 4 - kt, \dotsc, s - 6 - kt : k = 2, 3 }\\
		\List{L}{3} &: \Set{1, s - 2} \cup \Set{ s - 7 - kt, \dotsc, s - 9 - kt : k = 1, 2, 3 }\\
		&\qquad \cup \Set{ s - 10 - kt, \dotsc, s - 12 - kt : k = 1, 2 }\\
		&\qquad \cup \Set{ s - 13 - kt, \dotsc, s - t + 1 - kt : k = 0, 1 },
	\end{align*}
	and any remaining vertices are assigned the list \(\List{L}{0}\).
	Figure~\ref{F:case4} illustrates this list assignment for the graph \(T(1, 45, 16)\).
	The only remaining case for \(t = (s + 3)/3\) is \(T(1,27,10)\), which
	is isomorphic to \(T(1,27,4)\), so this case is completed.
	
	Lastly, consider \(T(1, s, t)\) for \(t = (s - 3)/3\). By the remarks
	in Section~\ref{S:Preliminaries}, we can choose the horizontal normal circuit
	of \(T(1, s, t)\) as the vertical normal circuit to get that \(T(1, s, t)\)
	is isomorphic to a graph \(T(1, s, t')\) for some \(0 \leq t' \leq \floor{(s - 1)/2}\).
	A simple calculation shows that either \(t' \equiv -(1 + t^{-1}) \pmod{s}\)
	or \(t' \equiv t^{-1} \pmod{s}\). Using \(s = 3t + 3\) and the fact that \(t - 1 \equiv 0 \equiv s \pmod{3}\),
	we see that \(t' \neq t\). Thus, the graph \(T(1, s, t)\) is isomorphic to
	one of the cases considered earlier, so it is also not \(3\)-choosable.
	
	This covers the \(3\)-chromatic graphs of the form \(T(1, s, t)\).
	The graph \(T(2, 6, 2)\) requires an ad hoc list assignment
	\(\ListAsgn{L}\) as follows:
	\begin{align*}
		\List{L}{1} &: \Set{ (1, 1), (1, 3), (1, 5), (2, 1) }\\
		\List{L}{2} &: \Set{ (1, 2), (1, 4), (1, 6), (2, 6) }\\
		\List{L}{3} &: \Set{ (2, 2), (2, 3), (2, 4), (2, 5) }.
	\end{align*}
	Figure~\ref{F:T(262)} illustrates this list assignment.
	
	The only case left is \(T(3,9,3)\), which requires an ad hoc list assignment \(\ListAsgn{L}\) as follows:
	\begin{align*}
		\List{L}{1} &: \Set{ (1, 1), (1, 2), (1, 7), (1, 8), (2, 1), (2, 2), (3, 1), (3, 2) }\\
		\List{L}{2} &: \Set{ (1, 3), (1, 4), (1, 9), (2, 3), (2, 4), (3, 3), (3, 4) }\\
		\List{L}{3} &: \Set{ (1, 5), (1, 6), (2, 5), (2, 6), (2, 7), (2, 8), (2, 9), (3, 5), (3, 6), (3, 7), (3, 8), (3, 9) }.
	\end{align*}
	Figure~\ref{F:T(393)} illustrates this list assignment.

	\begin{figure}[H]
	\centering
		\begin{tikzpicture}[font=\scriptsize]
			\draw (1,1) grid (4,6);
			\foreach \y in {2,...,6}
			\foreach \x in {1,...,3}
			\draw (\x,\y) -- (\x+1,\y-1);
			\foreach \y in {1,...,6}{
				\pgfmathparse{Mod(\y+3,6)+1}
				\node[anchor=east] at (1,\pgfmathresult) {(2,\y)};
				\pgfmathparse{Mod(\y+1,6)+1}
				\node[anchor=west] at (4,\pgfmathresult) {(1,\y)};
			}
			\foreach \y in {2,...,5}{
				\node[anchor=south west] at (2,\y) {\!\!(1,\y)};
				\node[anchor=south west] at (3,\y) {\!\!(2,\y)};
			}
			\node[anchor=north] at (2,1) {(1,1)};
			\node[anchor=north] at (3,1) {(2,1)};
			\node[anchor=south] at (2,6) {(1,6)};
			\node[anchor=south] at (3,6) {(2,6)};
			
			\draw[fill=myred] (2,1) circle(2.5pt);
			\draw[fill=myred] (2,3) circle(2.5pt);
			\draw[fill=myred] (2,5) circle(2.5pt);
			\draw[fill=myred] (3,1) circle(2.5pt);
			\draw[fill=mygreen] (2,2) circle(2.5pt);
			\draw[fill=mygreen] (2,4) circle(2.5pt);
			\draw[fill=mygreen] (2,6) circle(2.5pt);
			\draw[fill=mygreen] (3,6) circle(2.5pt);
			\draw[fill=myblue] (3,2) circle(2.5pt);
			\draw[fill=myblue] (3,3) circle(2.5pt);
			\draw[fill=myblue] (3,4) circle(2.5pt);
			\draw[fill=myblue] (3,5) circle(2.5pt);
			
			\draw[fill=myred] (4,3) circle(2.5pt);
			\draw[fill=myred] (4,5) circle(2.5pt);
			\draw[fill=myred] (4,1) circle(2.5pt);
			\draw[fill=myred] (1,5) circle(2.5pt);
			\draw[fill=mygreen] (4,4) circle(2.5pt);
			\draw[fill=mygreen] (4,6) circle(2.5pt);
			\draw[fill=mygreen] (4,2) circle(2.5pt);
			\draw[fill=mygreen] (1,4) circle(2.5pt);
			\draw[fill=myblue] (1,6) circle(2.5pt);
			\draw[fill=myblue] (1,1) circle(2.5pt);
			\draw[fill=myblue] (1,2) circle(2.5pt);
			\draw[fill=myblue] (1,3) circle(2.5pt);
		\end{tikzpicture}
		\caption{A non-\(\ListAsgn{L}\)-colorable
			\(3\)-list-assignment on \(T(2, 6, 2)\).}\label{F:T(262)}
	\end{figure}
	
	\begin{figure}[H]
	\centering
		\begin{tikzpicture}[font=\scriptsize]
			\draw (1,1) grid (5,9);
			\foreach \y in {2,...,9}
			\foreach \x in {1,...,4}
			\draw (\x,\y) -- (\x+1,\y-1);
			\foreach \y in {1,...,9}{
				\pgfmathparse{Mod(\y+5,9)+1}
				\node[anchor=east] at (1,\pgfmathresult) {(3,\y)};
				\pgfmathparse{Mod(\y+2,9)+1}
				\node[anchor=west] at (5,\pgfmathresult) {(1,\y)};
			}
			\foreach \y in {2,...,8}{
				\node[anchor=south west] at (2,\y) {\!\!(1,\y)};
				\node[anchor=south west] at (3,\y) {\!\!(2,\y)};
				\node[anchor=south west] at (4,\y) {\!\!(3,\y)};
			}
			\node[anchor=north] at (2,1) {(1,1)};
			\node[anchor=north] at (3,1) {(2,1)};
			\node[anchor=north] at (4,1) {(3,1)};
			\node[anchor=south] at (2,9) {(1,9)};
			\node[anchor=south] at (3,9) {(2,9)};
			\node[anchor=south] at (4,9) {(3,9)};
			
			\draw[fill=mygreen] (1,1) circle(2.5pt);
			\draw[fill=myblue] (1,2) circle(2.5pt);
			\draw[fill=myblue] (1,3) circle(2.5pt);
			\draw[fill=myblue] (1,4) circle(2.5pt);
			\draw[fill=myblue] (1,5) circle(2.5pt);
			\draw[fill=myblue] (1,6) circle(2.5pt);
			\draw[fill=myred] (1,7) circle(2.5pt);
			\draw[fill=myred] (1,8) circle(2.5pt);
			\draw[fill=mygreen] (1,9) circle(2.5pt);

			\draw[fill=myred] (2,1) circle(2.5pt);
			\draw[fill=myred] (2,2) circle(2.5pt);
			\draw[fill=mygreen] (2,3) circle(2.5pt);
			\draw[fill=mygreen] (2,4) circle(2.5pt);
			\draw[fill=myblue] (2,5) circle(2.5pt);
			\draw[fill=myblue] (2,6) circle(2.5pt);
			\draw[fill=myred] (2,7) circle(2.5pt);
			\draw[fill=myred] (2,8) circle(2.5pt);
			\draw[fill=mygreen] (2,9) circle(2.5pt);

			\draw[fill=myred] (3,1) circle(2.5pt);
			\draw[fill=myred] (3,2) circle(2.5pt);
			\draw[fill=mygreen] (3,3) circle(2.5pt);
			\draw[fill=mygreen] (3,4) circle(2.5pt);
			\draw[fill=myblue] (3,5) circle(2.5pt);
			\draw[fill=myblue] (3,6) circle(2.5pt);
			\draw[fill=myblue] (3,7) circle(2.5pt);
			\draw[fill=myblue] (3,8) circle(2.5pt);
			\draw[fill=myblue] (3,9) circle(2.5pt);

			\draw[fill=myred] (4,1) circle(2.5pt);
			\draw[fill=myred] (4,2) circle(2.5pt);
			\draw[fill=mygreen] (4,3) circle(2.5pt);
			\draw[fill=mygreen] (4,4) circle(2.5pt);
			\draw[fill=myblue] (4,5) circle(2.5pt);
			\draw[fill=myblue] (4,6) circle(2.5pt);
			\draw[fill=myblue] (4,7) circle(2.5pt);
			\draw[fill=myblue] (4,8) circle(2.5pt);
			\draw[fill=myblue] (4,9) circle(2.5pt);

			\draw[fill=myred] (5,1) circle(2.5pt);
			\draw[fill=myred] (5,2) circle(2.5pt);
			\draw[fill=mygreen] (5,3) circle(2.5pt);
			\draw[fill=myred] (5,4) circle(2.5pt);
			\draw[fill=myred] (5,5) circle(2.5pt);
			\draw[fill=mygreen] (5,6) circle(2.5pt);
			\draw[fill=mygreen] (5,7) circle(2.5pt);
			\draw[fill=myblue] (5,8) circle(2.5pt);
			\draw[fill=myblue] (5,9) circle(2.5pt);
		\end{tikzpicture}
		\caption{A non-\(\ListAsgn{L}\)-colorable
			\(3\)-list-assignment on \(T(3, 9, 3)\).}\label{F:T(393)}
	\end{figure}

	\begin{figure}[H]
	\centering
		\begin{subfigure}{0.45\textwidth}
			\centering
			\begin{tikzpicture}[font=\scriptsize,scale=0.5]
				\draw (1,1) grid (5,21);
				\foreach \y in {2,...,21}
				\foreach \x in {1,...,4}
				\draw (\x,\y) -- (\x+1,\y-1);
				\foreach \y in {1,...,21}{
					\node[anchor=east] at (1,\y) {\y};
					\pgfmathparse{Mod(\y+15,21)+1}
					\node[anchor=west] at (5,\pgfmathresult) {\y};
				}
				\node[anchor=north] at (1,1) {\vphantom{\scriptsize{(1,1)}}};
				\node[anchor=north] at (2,1) {18};
				\node[anchor=north] at (3,1) {14};
				\node[anchor=north] at (4,1) {10};
				\node[anchor=south] at (2,21) {17};
				\node[anchor=south] at (3,21) {13};
				\node[anchor=south] at (4,21) {9};
				
				\foreach \y in {1,2,3}{
					\draw[fill=myyellow] (1,\y) circle(4pt);
					\pgfmathparse{Mod(\y+3,21)+1}
					\draw[fill=myyellow] (2,\pgfmathresult) circle(4pt);	
					\pgfmathparse{Mod(\y+7,21)+1}
					\draw[fill=myyellow] (3,\pgfmathresult) circle(4pt);	
					\pgfmathparse{Mod(\y+11,21)+1}
					\draw[fill=myyellow] (4,\pgfmathresult) circle(4pt);	
					\pgfmathparse{Mod(\y+15,21)+1}
					\draw[fill=myyellow] (5,\pgfmathresult) circle(4pt);	
				}
				\foreach \y in {21,20,17,16,13,12,9,8,5,4}{
					\draw[fill=myred] (1,\y) circle(4pt);
					\pgfmathparse{Mod(\y+3,21)+1}
					\draw[fill=myred] (2,\pgfmathresult) circle(4pt);	
					\pgfmathparse{Mod(\y+7,21)+1}
					\draw[fill=myred] (3,\pgfmathresult) circle(4pt);	
					\pgfmathparse{Mod(\y+11,21)+1}
					\draw[fill=myred] (4,\pgfmathresult) circle(4pt);	
					\pgfmathparse{Mod(\y+15,21)+1}
					\draw[fill=myred] (5,\pgfmathresult) circle(4pt);	
				}
				\foreach \y in {18,14,10,6}{
					\draw[fill=myblue] (1,\y) circle(4pt);
					\pgfmathparse{Mod(\y+3,21)+1}
					\draw[fill=myblue] (2,\pgfmathresult) circle(4pt);	
					\pgfmathparse{Mod(\y+7,21)+1}
					\draw[fill=myblue] (3,\pgfmathresult) circle(4pt);	
					\pgfmathparse{Mod(\y+11,21)+1}
					\draw[fill=myblue] (4,\pgfmathresult) circle(4pt);	
					\pgfmathparse{Mod(\y+15,21)+1}
					\draw[fill=myblue] (5,\pgfmathresult) circle(4pt);	
				}
				\foreach \y in {19,15,11,7}{
					\draw[fill=mygreen] (1,\y) circle(4pt);
					\pgfmathparse{Mod(\y+3,21)+1}
					\draw[fill=mygreen] (2,\pgfmathresult) circle(4pt);	
					\pgfmathparse{Mod(\y+7,21)+1}
					\draw[fill=mygreen] (3,\pgfmathresult) circle(4pt);	
					\pgfmathparse{Mod(\y+11,21)+1}
					\draw[fill=mygreen] (4,\pgfmathresult) circle(4pt);	
					\pgfmathparse{Mod(\y+15,21)+1}
					\draw[fill=mygreen] (5,\pgfmathresult) circle(4pt);	
				}
			\end{tikzpicture}
			\caption{}
		\end{subfigure}
		\begin{subfigure}{0.45\textwidth}
			\centering
			\begin{tikzpicture}[font=\scriptsize]
				\draw (1,17) grid (5,21);
				\draw[dotted]
				(1,16) -- (1,17) -- (2,16) -- (2,17) -- (3,16) -- (3,17) -- (4,16) -- (4,17) -- (5,16) -- (5,17);
				\node[anchor=north] at (2,16) {\vphantom{(2,16)}};
				\foreach \y in {18,...,21}
				\foreach \x in {1,...,4}
				\draw (\x,\y) -- (\x+1,\y-1);
				\foreach \y in {17,...,21}{
					\node[anchor=east] at (1,\y) {\y};
				}
				\foreach \y in {13,...,16}{
					\pgfmathparse{Mod(\y+3,21)+1}
					\node[anchor=south west] at (2,\pgfmathresult) {\y};
				}
				\foreach \y in {9,...,12}{
					\pgfmathparse{Mod(\y+7,21)+1}
					\node[anchor=south west] at (3,\pgfmathresult) {\y};
				}
				\foreach \y in {5,...,8}{
					\pgfmathparse{Mod(\y+11,21)+1}
					\node[anchor=south west] at (4,\pgfmathresult) {\y};
				}
				\foreach \y in {1,...,5}{
					\pgfmathparse{Mod(\y+15,21)+1}
					\node[anchor=west] at (5,\pgfmathresult) {\y};
				}
				\node[anchor=south] at (2,21) {17};
				\node[anchor=south] at (3,21) {13};
				\node[anchor=south] at (4,21) {9};
				\foreach \x in {1,2,3,4,5}{
					\draw[fill=myred] (\x,21) circle(2.5pt);
					\draw[fill=myred] (\x,20) circle(2.5pt);
				}
				\foreach \x in {1,2,3,4}{
					\draw[fill=mygreen] (\x,19) circle(2.5pt);
					\draw[fill=myblue] (\x,18) circle(2.5pt);
					\draw[fill=myred] (\x,17) circle(2.5pt);
				}
				\foreach \y in {17,18,19}{
					\draw[fill=myyellow] (5,\y) circle(2.5pt);
				}
			\end{tikzpicture}
			\caption{}
		\end{subfigure}
	\caption{A non-\(\ListAsgn{L}\)-colorable \(3\)-list-assignment on \(T(1, s, 4)\) for \(s = 21\).}\label{F:case2}
	\end{figure}

	\begin{figure}[H]
	\centering
		\begin{subfigure}{0.45\textwidth}
		\centering
			\begin{tikzpicture}[font=\scriptsize,scale=0.5]
				\draw (1,1) grid (5,27);
				\foreach \y in {2,...,27}
				\foreach \x in {1,...,4}
				\draw (\x,\y) -- (\x+1,\y-1);
				\foreach \y in {1,...,27}{
					\node[anchor=east] at (1,\y) {\y};
					\pgfmathparse{Mod(\y+27,27)+1}
					\node[anchor=west] at (5,\pgfmathresult) {\y};
				}
				\node[anchor=north] at (1,1) {\vphantom{\scriptsize{(1,1)}}};
				\node[anchor=north] at (2,1) {21};
				\node[anchor=north] at (3,1) {14};
				\node[anchor=north] at (4,1) {7};
				\node[anchor=south] at (2,27) {20};
				\node[anchor=south] at (3,27) {13};
				\node[anchor=south] at (4,27) {6};
				
				\foreach \y in {27,26,20,19,13,12,6,5}{
					\draw[fill=myred] (1,\y) circle(4pt);
					\pgfmathparse{Mod(\y+6,27)+1}
					\draw[fill=myred] (2,\pgfmathresult) circle(4pt);	
					\pgfmathparse{Mod(\y+13,27)+1}
					\draw[fill=myred] (3,\pgfmathresult) circle(4pt);	
					\pgfmathparse{Mod(\y+20,27)+1}
					\draw[fill=myred] (4,\pgfmathresult) circle(4pt);	
					\pgfmathparse{Mod(\y+27,27)+1}
					\draw[fill=myred] (5,\pgfmathresult) circle(4pt);	
				}
				\foreach \y in {23,22,16,15,9,8,2,1,21,14,7}{
					\draw[fill=myblue] (1,\y) circle(4pt);
					\pgfmathparse{Mod(\y+6,27)+1}
					\draw[fill=myblue] (2,\pgfmathresult) circle(4pt);	
					\pgfmathparse{Mod(\y+13,27)+1}
					\draw[fill=myblue] (3,\pgfmathresult) circle(4pt);	
					\pgfmathparse{Mod(\y+20,27)+1}
					\draw[fill=myblue] (4,\pgfmathresult) circle(4pt);	
					\pgfmathparse{Mod(\y+27,27)+1}
					\draw[fill=myblue] (5,\pgfmathresult) circle(4pt);	
				}
				\foreach \y in {25,24,18,17,11,10,4,3}{
					\draw[fill=mygreen] (1,\y) circle(4pt);
					\pgfmathparse{Mod(\y+6,27)+1}
					\draw[fill=mygreen] (2,\pgfmathresult) circle(4pt);	
					\pgfmathparse{Mod(\y+13,27)+1}
					\draw[fill=mygreen] (3,\pgfmathresult) circle(4pt);	
					\pgfmathparse{Mod(\y+20,27)+1}
					\draw[fill=mygreen] (4,\pgfmathresult) circle(4pt);	
					\pgfmathparse{Mod(\y+27,27)+1}
					\draw[fill=mygreen] (5,\pgfmathresult) circle(4pt);	
				}
			\end{tikzpicture}
			\caption{}
		\end{subfigure}
		\begin{subfigure}{0.45\textwidth}
		\centering
			\begin{tikzpicture}[font=\scriptsize]
				\draw (1,20) grid (5,27);
				\draw[dotted]
				(1,19) -- (1,20) -- (2,19) -- (2,20) -- (3,19) -- (3,20) -- (4,19) -- (4,20) -- (5,19) -- (5,20);
				\node[anchor=north] at (2,19) {\vphantom{(2,19)}};
				\foreach \y in {21,...,27}
				\foreach \x in {1,...,4}
				\draw (\x,\y) -- (\x+1,\y-1);
				\foreach \y in {20,...,27}{
					\node[anchor=east] at (1,\y) {\y};
				}
				\foreach \y in {13,...,19}{
					\pgfmathparse{Mod(\y+6,27)+1}
					\node[anchor=south west] at (2,\pgfmathresult) {\y};
				}
				\foreach \y in {6,...,12}{
					\pgfmathparse{Mod(\y+13,27)+1}
					\node[anchor=south west] at (3,\pgfmathresult) {\y};
				}
				\foreach \y in {1,...,5,26,27}{
					\pgfmathparse{Mod(\y+20,27)+1}
					\node[anchor=south west] at (4,\pgfmathresult) {\y};
				}
				\foreach \y in {19,...,26}{
					\pgfmathparse{Mod(\y+27,27)+1}
					\node[anchor=west] at (5,\pgfmathresult) {\y};
				}
				\node[anchor=south] at (2,27) {20};
				\node[anchor=south] at (3,27) {13};
				\node[anchor=south] at (4,27) {6};
				\foreach \x in {1,2,3,4}{
					\draw[fill=myred] (\x,27) circle(2.5pt);
					\draw[fill=myred] (\x,26) circle(2.5pt);
					\draw[fill=mygreen] (\x,25) circle(2.5pt);
					\draw[fill=mygreen] (\x,24) circle(2.5pt);
					\draw[fill=myblue] (\x,23) circle(2.5pt);
					\draw[fill=myblue] (\x,22) circle(2.5pt);
					\draw[fill=myred] (\x,20) circle(2.5pt);
				}
				\foreach \x in {1,2,3}{
					\draw[fill=myblue] (\x,21) circle(2.5pt);
				}
				\draw[fill=myred] (4,21) circle(2.5pt);
				\draw[fill=myred] (5,20) circle(2.5pt);
				\draw[fill=myred] (5,21) circle(2.5pt);
				\draw[fill=myblue] (5,22) circle(2.5pt);
				\draw[fill=myblue] (5,23) circle(2.5pt);
				\draw[fill=myblue] (5,24) circle(2.5pt);
				\draw[fill=mygreen] (5,25) circle(2.5pt);
				\draw[fill=mygreen] (5,26) circle(2.5pt);
				\draw[fill=myred] (5,27) circle(2.5pt);
			\end{tikzpicture}
			\caption{}
		\end{subfigure}
		\caption{A non-\(\ListAsgn{L}\)-colorable \(3\)-list-assignment on \(T(1, s, (s+1)/4)\) for \(s = 27\).}\label{F:case3}
	\end{figure}

	\begin{figure}[H]
	\centering
		\begin{subfigure}{0.45\textwidth}
		\centering
			\begin{tikzpicture}[font=\scriptsize,scale=0.4]
				\draw (1,1) grid (5,45);
				\foreach \y in {2,...,45}
				\foreach \x in {1,...,4}
				\draw (\x,\y) -- (\x+1,\y-1);
				\foreach \y in {1,...,45}{
					\node[anchor=east] at (1,\y) {\y};
					\pgfmathparse{Mod(\y+63,45)+1}
					\node[anchor=west] at (5,\pgfmathresult) {\y};
				}
				\node[anchor=north] at (1,1) {\vphantom{\scriptsize{(1,1)}}};
				\node[anchor=north] at (2,1) {30};
				\node[anchor=north] at (3,1) {14};
				\node[anchor=north] at (4,1) {43};
				\node[anchor=south] at (2,45) {29};
				\node[anchor=south] at (3,45) {13};
				\node[anchor=south] at (4,45) {42};
				
				\foreach \y in {45,44,42,41,29,28,26,25,13,12}{
					\draw[fill=myred] (1,\y) circle(4pt);
					\pgfmathparse{Mod(\y+15,45)+1}
					\draw[fill=myred] (2,\pgfmathresult) circle(4pt);	
					\pgfmathparse{Mod(\y+31,45)+1}
					\draw[fill=myred] (3,\pgfmathresult) circle(4pt);	
					\pgfmathparse{Mod(\y+47,45)+1}
					\draw[fill=myred] (4,\pgfmathresult) circle(4pt);	
					\pgfmathparse{Mod(\y+63,45)+1}
					\draw[fill=myred] (5,\pgfmathresult) circle(4pt);	
				}
				\foreach \y in {43,35,34,33,32,31,30,22,21,20,19,18,17,16,15,14,6,5,4,3,2,1}{
					\draw[fill=myblue] (1,\y) circle(4pt);
					\pgfmathparse{Mod(\y+15,45)+1}
					\draw[fill=myblue] (2,\pgfmathresult) circle(4pt);	
					\pgfmathparse{Mod(\y+31,45)+1}
					\draw[fill=myblue] (3,\pgfmathresult) circle(4pt);	
					\pgfmathparse{Mod(\y+47,45)+1}
					\draw[fill=myblue] (4,\pgfmathresult) circle(4pt);	
					\pgfmathparse{Mod(\y+63,45)+1}
					\draw[fill=myblue] (5,\pgfmathresult) circle(4pt);	
				}
				\foreach \y in {40,39,38,37,36,27,24,23,11,10,9,8,7}{
					\draw[fill=mygreen] (1,\y) circle(4pt);
					\pgfmathparse{Mod(\y+15,45)+1}
					\draw[fill=mygreen] (2,\pgfmathresult) circle(4pt);	
					\pgfmathparse{Mod(\y+31,45)+1}
					\draw[fill=mygreen] (3,\pgfmathresult) circle(4pt);	
					\pgfmathparse{Mod(\y+47,45)+1}
					\draw[fill=mygreen] (4,\pgfmathresult) circle(4pt);	
					\pgfmathparse{Mod(\y+63,45)+1}
					\draw[fill=mygreen] (5,\pgfmathresult) circle(4pt);	
				}
			\end{tikzpicture}
			\caption{}
		\end{subfigure}
		\begin{subfigure}{0.45\textwidth}
		\centering
			\begin{tikzpicture}[font=\scriptsize,scale=0.8]
				\draw (1,29) grid (5,45);
				\draw[dotted]
				(1,28) -- (1,29) -- (2,28) -- (2,29) -- (3,28) -- (3,29) -- (4,28) -- (4,29) -- (5,28) -- (5,29);
				\node[anchor=north] at (2,28) {\vphantom{(2,28)}};
				\foreach \y in {30,...,45}
				\foreach \x in {1,...,4}
				\draw (\x,\y) -- (\x+1,\y-1);
				\foreach \y in {29,...,45}{
					\node[anchor=east] at (1,\y) {\y};
				}
				\foreach \y in {13,...,28}{
					\pgfmathparse{Mod(\y+15,45)+1}
					\node[anchor=south west] at (2,\pgfmathresult) {\y};
				}
				\foreach \y in {1,...,12,42,43,44,45}{
					\pgfmathparse{Mod(\y+31,45)+1}
					\node[anchor=south west] at (3,\pgfmathresult) {\y};
				}
				\foreach \y in {26,...,41}{
					\pgfmathparse{Mod(\y+47,45)+1}
					\node[anchor=south west] at (4,\pgfmathresult) {\y};
				}
				\foreach \y in {10,...,26}{
					\pgfmathparse{Mod(\y+63,45)+1}
					\node[anchor=west] at (5,\pgfmathresult) {\y};
				}
				\node[anchor=south] at (2,45) {29};
				\node[anchor=south] at (3,45) {13};
				\node[anchor=south] at (4,45) {42};
				\foreach \x in {1,2,3,4,5}{
					\draw[fill=myred] (\x,45) circle(2.5pt);
					\draw[fill=myred] (\x,44) circle(2.5pt);
					\draw[fill=myblue] (\x,35) circle(2.5pt);
					\draw[fill=myblue] (\x,34) circle(2.5pt);
					\draw[fill=myblue] (\x,33) circle(2.5pt);
				}
				\foreach \x in {2,3,4,5}{
					\draw[fill=mygreen] (\x,43) circle(2.5pt);
					\draw[fill=myblue] (\x,38) circle(2.5pt);
					\draw[fill=myblue] (\x,37) circle(2.5pt);
					\draw[fill=myblue] (\x,36) circle(2.5pt);
				}
				\foreach \x in {1,2,3,4}{
					\draw[fill=mygreen] (\x,40) circle(2.5pt);
					\draw[fill=mygreen] (\x,39) circle(2.5pt);
					\draw[fill=myred] (\x,29) circle(2.5pt);
				}
				\foreach \x in {1,2}{
					\draw[fill=myred] (\x,42) circle(2.5pt);
					\draw[fill=myred] (\x,41) circle(2.5pt);
					\draw[fill=myblue] (\x,32) circle(2.5pt);
					\draw[fill=myblue] (\x,31) circle(2.5pt);
				}
				\foreach \x in {3,4,5}{
					\draw[fill=mygreen] (\x,42) circle(2.5pt);
					\draw[fill=myred] (\x,32) circle(2.5pt);
					\draw[fill=myred] (\x,31) circle(2.5pt);
				}
				\draw[fill=myblue] (1,43) circle(2.5pt);
				\draw[fill=mygreen] (3,41) circle(2.5pt);
				\draw[fill=mygreen] (4,41) circle(2.5pt);
				\draw[fill=mygreen] (1,38) circle(2.5pt);
				\draw[fill=mygreen] (1,37) circle(2.5pt);
				\draw[fill=mygreen] (1,36) circle(2.5pt);
				\draw[fill=mygreen] (5,29) circle(2.5pt);
				\draw[fill=mygreen] (5,30) circle(2.5pt);
				\draw[fill=mygreen] (4,30) circle(2.5pt);
				\draw[fill=myblue] (5,41) circle(2.5pt);
				\draw[fill=myblue] (5,40) circle(2.5pt);
				\draw[fill=myblue] (5,39) circle(2.5pt);
				\draw[fill=myblue] (1,30) circle(2.5pt);
				\draw[fill=myblue] (2,30) circle(2.5pt);
				\draw[fill=myblue] (3,30) circle(2.5pt);
			\end{tikzpicture}
			\caption{}
		\end{subfigure}
		\caption{A non-\(\ListAsgn{L}\)-colorable \(3\)-list-assignment on \(T(1, s, (s+3)/3)\) for \(s = 45\).}\label{F:case4}
	\end{figure}

\end{document}